%\pdfoutput=1
\documentclass[12pt,a4paper,reqno]{amsart}
\usepackage{amssymb}
\usepackage{amscd}
\usepackage{enumerate}
\usepackage{graphicx}
\usepackage{siunitx}
\usepackage{tikz-cd}
\usepackage{bm}
\numberwithin{equation}{section}

\usepackage{mathtools}%                  http://www.ctan.org/pkg/mathtools
\usepackage[tableposition=top]{caption}% http://www.ctan.org/pkg/caption
\usepackage{booktabs,dcolumn}%           http://www.ctan.org/pkg/dcolumn + http://www.ctan.org/pkg/booktabs

% Lighter notation.
%\newcommand*\mc[1]{\multicolumn{1}{c}{#1}}
%\newcommand*\tupref[2]{\href{http://math.mit.edu/~primegaps/tuples/admissible_#1_#2.txt}{\num{#2}}}

%\DeclareMathOperator*\Kl{Kl} (commented because yield bad display for  \Kl_q %replaced with \newcommand... )
%\DeclareMathOperator*\FT{FT} (commented because yield bad display for \FT_q
%replaced with \newcommand...)

% Setup for ``caption''.
%\DeclareCaptionLabelSeparator{separation}{:\quad}
%\captionsetup{
  %font=small,
  %labelfont=sc,
  %labelsep=separation,
  %width=0.8\textwidth
%}

\DeclareFontFamily{OT1}{rsfs}{}
\DeclareFontShape{OT1}{rsfs}{n}{it}{<-> rsfs10}{}
\DeclareMathAlphabet{\mathscr}{OT1}{rsfs}{n}{it}

\addtolength{\textwidth}{3 truecm}
\addtolength{\textheight}{1 truecm}
\setlength{\voffset}{-.6 truecm}
\setlength{\hoffset}{-1.3 truecm}
     
\theoremstyle{plain}

\newtheorem{theorem}{Theorem}[section]
\newtheorem{proposition}[theorem]{Proposition}
\newtheorem{lemma}[theorem]{Lemma}
\newtheorem{corollary}[theorem]{Corollary}
\newtheorem{conjecture}[theorem]{Conjecture}

\theoremstyle{definition}

\newtheorem{remark}[theorem]{Remark}

\newtheorem{example}[theorem]{Example}

\renewcommand\P{\mathbb{P}}
\newcommand\E{\mathbb{E}}

\newcommand\R{\mathbb{R}}
\newcommand\Z{\mathbb{Z}}
\newcommand\N{\mathbb{N}}

\newcommand\C{\mathbb{C}}

\renewcommand\Re{{\operatorname{Re}}}
\renewcommand\Im{{\operatorname{Im}}}
\newcommand\eps{\varepsilon}
\newcommand\deriv{\zeta}
\newcommand\zero{\lambda}

\parindent 0mm
\parskip   5mm

\begin{document}

\title[Sendov's conjecture for high degree polynomials]{Sendov's conjecture for sufficiently high degree polynomials}

\author{Terence Tao}
\address{UCLA Department of Mathematics, Los Angeles, CA 90095-1555.}
\email{tao@math.ucla.edu}

%\email{}

\subjclass[2010]{30C15, 31A05}

\begin{abstract}  \emph{Sendov's conjecture} asserts that if a complex polynomial $f$ of degree $n \geq 2$ has all of its zeroes in closed unit disk $\{ z: |z| \leq 1 \}$, then for each such zero $\zero_0$ there is a zero of the derivative $f'$ in the closed unit disk $\{ z: |z-\zero_0| \leq 1 \}$.  This conjecture is known for $n < 9$, but only partial results are available for higher $n$.  We show that there exists a constant $n_0$ such that Sendov's conjecture holds for $n \geq n_0$.  For $\zero_0$ away from the origin and the unit circle we can appeal to the prior work of D\'egot and Chalebgwa; for $\zero_0$ near the unit circle we refine a previous argument of Miller (and also invoke results of Chijiwa when $\zero_0$ is extremely close to the unit circle); and for $\zero_0$ near the origin we introduce a new argument using compactness methods, balayage, and the argument principle.
\end{abstract}

\maketitle

%%%%%%%%%%%%%%%%%%%%%%%%%%%%%%%%%%%%%%%%%%%%%%%%%

\section{Introduction}

Given a complex number $z_0 \in \C$ and a radius $r>0$, we define the open disk
$$ D(z_0,r) \coloneqq \{ z \in \C: |z-z_0| < r \},$$
the closed disk
$$ \overline{D(z_0,r)} \coloneqq \{ z \in \C: |z-z_0| \leq r \},$$
the exterior region
$$ \overline{D(z_0,r)}^c \coloneqq \{ z \in \C: |z-z_0| > r \},$$
and the circle
$$ \partial  D(z_0,r) \coloneqq \{ z \in \C: |z-z_0| = r \}.$$

The following conjecture was first introduced by Sendov in 1958 (see \cite[p. 267]{marden}), and first reported in the literature by Hayman \cite[Problem 4.5]{hayman} who incorrectly attributed it to Ilieff who was presenting the conjecture:

\begin{conjecture}[Sendov's conjecture]\label{sendov}  Let $f: \C \to \C$ be a polynomial of degree $n \geq 2$ that has all zeroes in the closed unit disk $\overline{D(0,1)}$.  If $\zero_0$ is one of these zeroes, then $f'$ has at least one zero in $\overline{D(\zero_0,1)}$.
\end{conjecture}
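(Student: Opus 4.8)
The plan is to prove Sendov's conjecture by treating the degree $n$ in two ranges, and for the large-degree range by splitting the possible locations of the marked zero $\zero_0$ into three regimes. After normalizing so that $f(z)=\prod_{j=1}^n(z-\zero_j)$ with $\zero_1=\zero_0$ and $|\zero_j|\le 1$ for all $j$, write $d\coloneqq|\zero_0|$ and fix a small absolute constant $\delta>0$. The three regimes are the \emph{interior} regime $\delta\le d\le 1-\delta$, the \emph{near-circle} regime $1-\delta\le d\le 1$, and the \emph{near-origin} regime $d\le\delta$. In each regime the objective is to exhibit a zero of $f'$ in $\overline{D(\zero_0,1)}$, equivalently to show that $f'$ does not avoid that disk; one argues by contradiction, assuming all zeroes of $f'$ lie in $\overline{D(\zero_0,1)}^c$.

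For the interior regime I would invoke the quantitative estimates of D\'egot, in the sharpened form due to Chalebgwa: these already furnish, for each fixed $\delta>0$, a threshold $n_1(\delta)$ such that Sendov's conjecture holds whenever $n\ge n_1(\delta)$ and $\delta\le|\zero_0|\le 1-\delta$, via an analysis of $f'/f$ and the location of the critical points relative to the convex hull of $\{\zero_j\}$. For the near-circle regime I would refine Miller's method, whose idea is that avoidance of $\overline{D(\zero_0,1)}$ by $f'$ forces the zero set of $f$ to cluster near an extremal configuration modelled on $z^n-1$; this is incompatible with $|\zero_0|\le 1$ once $n$ is large, and when $\zero_0$ lies within an $n$-dependent distance of $\partial D(0,1)$ one instead appeals to the finer estimates of Chijiwa. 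Both arguments again produce a degree threshold depending only on $\delta$.

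The new ingredient is the near-origin regime. Here, assuming for contradiction a sequence of counterexamples $f_n$ of degrees $n\to\infty$ with $|\zero_0^{(n)}|\le\delta$ and no critical point in the corresponding disk, I would extract a limit: after passing to a subsequence, the normalized zero-counting measures of the $f_n$ converge weakly to a probability measure $\mu$ on $\overline{D(0,1)}$, and $\zero_0^{(n)}$ converges to some $\zero_\infty$ with $|\zero_\infty|\le\delta$. The suitably renormalized logarithmic derivative $f_n'/f_n$ converges to the Cauchy transform of $\mu$, and the hypothesis that $f_n'$ avoids $\overline{D(\zero_0^{(n)},1)}$ becomes a constraint on where this transform may vanish. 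Applying balayage to sweep $\mu$ out to $\partial D(0,1)$ --- which does not increase the relevant logarithmic potential outside the disk, hence preserves the obstruction --- reduces matters to measures supported on the unit circle, and then the argument principle applied to the Cauchy transform on a circle centred at $\zero_\infty$ of radius slightly less than $1$ counts a zero that the avoidance hypothesis forbids, giving the contradiction. Quantifying this compactness argument yields a threshold $n_2(\delta)$ for the near-origin regime; choosing $\delta$ and then $n_0\coloneqq\max(n_1(\delta),n_2(\delta))$ (also incorporating the near-circle threshold) settles all $n\ge n_0$.

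I expect the genuine obstacle to be the remaining finite window of degrees $2\le n<n_0$. For $2\le n\le 8$ the conjecture is already known, so what must still be dealt with is $9\le n<n_0$. Closing it would require either making every implied constant in the three asymptotic arguments effective --- so that $n_0$ becomes an explicit number --- and then verifying the conjecture for each remaining $n$ by a rigorous finite computation, or else replacing the qualitative compactness step in the near-origin regime by a quantitative one with a small, explicit threshold. The first route is limited precisely by the compactness argument, which as described extracts a subsequential limit and therefore yields no bound whatsoever; the second route demands converting the weak-limit reasoning --- balayage together with the argument principle --- into a hard inequality controlling the zero distribution of $f$ near the origin, strong enough to force a critical point into $\overline{D(\zero_0,1)}$ at all the intermediate degrees. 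This conversion is where essentially all the remaining difficulty lies.
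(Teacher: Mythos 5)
Your proposal captures the overall architecture of the paper's argument: treat the marked zero in three regimes (interior, near circle, near origin), delegate the first two to Dégot--Chalebgwa and a Miller--Chijiwa refinement respectively, and develop a new compactness argument via balayage and the argument principle for the near-origin regime; and you correctly diagnose that compactness yields no effective $n_0$, so the intermediate window $9\le n<n_0$ is left open. However, the near-origin argument as you describe it has two substantive gaps. First, a fixed-$\delta$ split ``$d\le\delta$'' does not match the applicability of the compactness argument: the paper's version requires the marked zero to tend to zero (more precisely $a=o(1/\log n)$ after subsequence extraction) so that the lune $\overline{D(0,1)}\backslash\overline{D(a,1)}$ collapses onto the semicircle $C=\{e^{i\theta}:\pi/2\le\theta\le3\pi/2\}$; if instead $a^{(\infty)}=a_0\in(0,\delta]$, one must route those sequences back to Dégot's interior result, or the argument fails. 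Second, the contour you propose --- a circle centred near $\zero_\infty$ of radius slightly less than $1$ --- is the wrong one: it passes close to the clustering zeros of $f$ and encircles essentially all of them, so the winding count gives nothing usable. The paper instead uses a small circle $\partial D(0,r)$ with $r$ in a fixed annulus $[r_1,r_2]\subset(0,1/2)$. The crucial ingredient is not ``sweeping $\mu$ out to $\partial D(0,1)$,'' but rather using balayage \emph{diagnostically}: the matching of $U_{\zeta}$ and $\tfrac{n-1}{n}U_{\xi}$ outside the unit disk forces the balayages of the limiting zero and critical-point measures to agree, and since the critical points are confined to a thin lune near $C$, this pushes the limiting zero measure onto $C$ as well. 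This is what guarantees that the limiting Cauchy transform is holomorphic and nonvanishing on an annulus around the origin, so that its winding number $m\ge 0$ survives a Rouché transfer to $\tfrac1n f'/f$ and contradicts the count $\#\{f'=0\}-\#\{f=0\}\le -1$ inside $D(0,r)$. Similarly, in the near-circle regime the statement that clustering near $z^n-1$ is ``incompatible with $|\zero_0|\le 1$'' elides the delicate part: the paper must run a second-order Taylor expansion, establish that the variance $\sigma^2$ of the critical points dominates, and finally exploit the geometric fact that nonzero points of the arc $\overline{D(0,1)}\cap\partial D(1,1)$ have argument in $[\pi/3,\pi/2]\cup[-\pi/2,-\pi/3]$, so that their squares have negative real part; without this second-Fourier-coefficient step the near-counterexamples of the form $(z+c_2/n)^{n-m}P(z)-(a+c_2/n)^{n-m}P(a)$ are not excluded.
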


There is a long history of partial results towards this conjecture; see for instance \cite{marden-survey}, \cite{sendov}, \cite{sendov-2}, \cite{rahman}, \cite{sheil} for some surveys of results.  The conjecture is known for low degrees, and specifically for all $n < 9$ \cite{brown-xiang}.  Our main result is in the opposite direction, covering the high degree case:

\begin{theorem}[Sendov's conjecture for sufficiently high degree polynomials]\label{main}  Sendov's conjecture is true for all sufficiently large $n$.  That is, there exists an absolute constant $n_0$ such that Sendov's conjecture holds for $n \geq n_0$.
\end{theorem}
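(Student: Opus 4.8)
The plan is to reduce to a real $\zero_0 \in (0,1]$ and then partition that interval into three ranges, handled by three different methods. Scaling $f$ we may take it monic, and rotating the plane we may take $\zero_0 \in [0,1]$. If $\zero_0 = 0$ we are done immediately, since by the Gauss--Lucas theorem every zero of $f'$ lies in the convex hull of the zeros of $f$, hence in $\overline{D(0,1)} = \overline{D(\zero_0,1)}$, and this hull is nonempty because $n \ge 2$. So assume $\zero_0 \in (0,1]$, fix small absolute constants $0 < \eps_0 < \eps_1 < \frac12$ to be chosen at the end, and treat separately the ranges $\zero_0 \in [\eps_0,\,1-\eps_1]$, $\zero_0 \in [1-\eps_1,\,1]$, and $\zero_0 \in (0,\eps_0]$.

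For $\zero_0 \in [\eps_0, 1-\eps_1]$ we prove nothing new: the effective form of D\'egot's theorem, as refined by Chalebgwa, already supplies a threshold $N = N(\eps_0,\eps_1)$ such that Sendov's conjecture holds whenever $\zero_0$ lies in this compact subinterval of $(0,1)$ and $n \ge N$. For $\zero_0 \in [1-\eps_1,1]$ we argue by contradiction: assuming $f'$ has no zero in $\overline{D(\zero_0,1)}$, we write $f = (z-\zero_0)g$ and analyse $f'/f = \frac{1}{z-\zero_0} + g'/g$ on and inside $\partial D(\zero_0,1)$; refining an argument of Miller we force $f$ to be quantitatively close to the polynomial $z^n - \zero_0^n$, whose unique critical point is the origin -- which lies at distance $|\zero_0| \le 1$ from $\zero_0$, hence in $\overline{D(\zero_0,1)}$, so is not itself a counterexample. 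When $1 - \zero_0$ is smaller than a suitable negative power of $n$ we instead perturb off the boundary case $|\zero_0| = 1$ (due to Rubinstein) using Chijiwa's quantitative stability estimates; in every subrange the ``close to $z^n - \zero_0^n$'' conclusion is upgraded to an outright contradiction.

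The genuinely new argument is for $\zero_0 \in (0,\eps_0]$. Suppose this range fails: there are monic polynomials $f_k$ of degrees $n_k \to \infty$ with all zeros in $\overline{D(0,1)}$, distinguished zeros $\zero_0^{(k)} \in (0,\eps_0]$, and $f_k'$ having no zero in $\overline{D(\zero_0^{(k)},1)}$ (so $\zero_0^{(k)}$ is in particular a simple zero of $f_k$, else Sendov would hold trivially). Passing to a subsequence, $\zero_0^{(k)} \to a \in [0,\eps_0]$, and the empirical zero-counting measures of $f_k$ and of $f_k'$ converge weakly to probability measures $\mu$ and $\mu'$ on $\overline{D(0,1)}$. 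By Gauss--Lucas the zeros of $f_k'$ lie in $\overline{D(0,1)}$, while the counterexample hypothesis confines them to the crescent $\overline{D(0,1)} \setminus D(\zero_0^{(k)},1) \subseteq \{\, 1 - \eps_0 \le |z| \le 1,\ \Re z \le \eps_0/2\,\}$; hence $\mu'$ is supported in the corresponding limiting crescent, a thin neighbourhood of the left half of the unit circle, and the potential $U^{\mu'}(z) := \int \log|z-w|\,d\mu'(w)$ is harmonic on $D(a,1)$. Comparing the two expansions $\frac{1}{n_k}\log|f_k'| = \frac{1}{n_k}\log|f_k| + \frac{1}{n_k}\log|f_k'/f_k|$ and letting $k \to \infty$ yields, away from the supports, the potential identity $U^{\mu'} = U^\mu + \log|C^\mu|$, with $C^\mu(z) := \int (z-w)^{-1}\,d\mu(w)$ the Cauchy transform of $\mu$.

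The contradiction is then extracted by feeding this identity into two classical devices. First, the \emph{balayage} of $\mu$ onto $\partial D(0,1)$ produces a comparison polynomial with all zeros on the unit circle whose modulus agrees with $|f_k|$ to within $e^{o(n_k)}$ throughout $\overline{D(0,1)}^c$, pinning down the behaviour of $f_k$ outside the disk. Second, the \emph{argument principle} applied to $f_k/f_k'$, which is holomorphic near $\overline{D(\zero_0^{(k)},1)}$ (as $f_k'$ is nonvanishing there) and vanishes simply at the centre with derivative $1$, identifies its winding along $\partial D(\zero_0^{(k)},1)$ with the number of zeros of $f_k$ inside that disk -- at least one, because of $\zero_0^{(k)}$. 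Balancing the exterior matching furnished by balayage against this winding count, and using the harmonicity of $U^{\mu'}$ on $D(a,1)$ together with the potential identity, forces the limiting arrangement of zeros of $f_k$ to protrude outside $\overline{D(0,1)}$ once $\eps_0$ is small and $n_k$ is large, which is the contradiction. I expect this synthesis to be the main obstacle: the compactness set-up is routine, the first two ranges are in essence citations, and the potential theory (weak convergence of zero measures, balayage, Cauchy transforms) is standard, but it is delicate both to show that a single zero of $f$ near the origin genuinely obstructs the extreme localisation of the critical points imposed by the hypothesis, and to convert this limiting rigidity into an \emph{effective} bound valid for all $n \ge n_0$ rather than merely along a convergent subsequence.
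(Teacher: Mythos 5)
Your three-way decomposition of $\zero_0$ and your choice of ingredients (cite D\'egot--Chalebgwa in the middle, Miller/Chijiwa near $1$, and a new compactness/potential-theory/argument-principle argument near $0$) match the paper's strategy. But the near-origin argument, which is the genuinely new part, has a real gap in how the argument principle is deployed.

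You apply the argument principle to $f_k/f_k'$ on the large circle $\partial D(\zero_0^{(k)},1)$, noting that $f_k'$ is nonvanishing inside so the winding number equals the number of zeros of $f_k$ in the disk, ``at least one.'' But that winding number is in fact genuinely positive and potentially as large as $n_k - O(1)$, since for $\zero_0^{(k)}$ near $0$ the disk $\overline{D(\zero_0^{(k)},1)}$ captures nearly all of $\overline{D(0,1)}$, and most zeros of $f_k$ cluster near $\partial D(0,1)$. There is no tension to exploit here. The contradiction you then invoke --- ``forces the limiting arrangement of zeros of $f_k$ to protrude outside $\overline{D(0,1)}$'' --- is not derived from anything you have established; it is a placeholder for the actual mechanism, which you have not identified. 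The paper instead runs the argument principle on a small annulus $r_1 \le |z| \le r_2$ with fixed $r_1 < r_2 < 1/2$, and applies it to $\tfrac{1}{n} f'/f$ rather than $f/f'$: by hypothesis $f'$ has no zeros in $D(0,r)$, $f$ has at least one (namely $a$), so the winding number of $\tfrac{1}{n}f'/f$ around $\partial D(0,r)$ is $\le -1$. On the other hand, $\tfrac{1}{n}f'/f = s_\zero$ approximates the limiting Stieltjes transform $s_{\zero^{(\infty)}}$, which is holomorphic and not identically zero in the annulus, hence by Rouch\'e has winding number equal to the number of its zeros in $D(0,r)$, a nonnegative integer --- contradiction.

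To make that work you need the crucial structural fact you have not stated: the limiting zero measure $\mu$ of $f_k$, not just the critical-point measure $\mu'$, is supported on the semicircle $C = \{e^{i\theta}: \pi/2 \le \theta \le 3\pi/2\}$. That is what makes $s_{\zero^{(\infty)}} = C^\mu$ holomorphic on $\C \setminus C$ and in particular near the origin. You correctly deduce that $\mu'$ lives on a crescent near $C$ (from Gauss--Lucas plus the counterexample hypothesis), and you correctly derive the exterior potential identity $U^{\mu'} = U^\mu + \log|C^\mu|$, but you use balayage only to ``pin down $f_k$ outside the disk,'' whereas the essential step is to pass the identity through balayage onto circles $\partial D(0,R)$, $R > 1$, to conclude that $\mu$ and $\mu'$ have identical balayages --- and then, since the Poisson kernel from $C$ barely sees arcs away from $C$, that $\mu$ cannot charge any compact subset of $\overline{D(0,1)} \setminus C$. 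Without this localisation of $\mu$ there is nothing preventing $s_{\zero^{(\infty)}}$ from being singular or vanishing identically near the origin, and the argument-principle count has no floor. You should also quantify the convergence of the zero-counting measure near the origin: the $L^1_{\mathrm{loc}}$ convergence of potentials is not by itself strong enough to control the winding of $\tfrac{1}{n}f'/f$ on a fixed circle, and the paper has to select the radius $r$ probabilistically to avoid the few rogue zeros of $f$ that may stray into the annulus. Finally, your near-origin range is a fixed $(0,\eps_0]$, while the argument-principle contradiction as described above only closes for $a = o(1/\log n)$ --- small enough that the lune forces $\mu'$ onto $C$ rather than onto the full circle; for $a$ a fixed positive constant, the lune does not degenerate to the semicircle, and you must either shrink the range and rely on Chalebgwa's $a \gtrsim n^{-1/7}$ to meet it (as the paper does), or adapt the argument as in the paper's Remark covering $n^{-O(1)} \le a \le o(1)$.
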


For each fixed $n$, Sendov's conjecture is a first-order statement in real arithmetic and is therefore decidable in finite time by Tarski's theorem \cite{tarski}; see the recent thesis \cite{spjut} for an explicit implementation of this decision procedure.  Thus, if we had an explicit upper bound for the quantity $n_0$ in Theorem \ref{main}, then Sendov's conjecture would also be decidable in finite time.  Our proof of Theorem \ref{main} relies on compactness methods and so does not easily give an effective value of $n_0$; however, we believe the use of these compactness methods is an inessential\footnote{For instance, the various appeals in this paper to unique continuation theorems for certain harmonic or holomorphic functions that emerge in an asymptotic limit could be replaced by Carleman inequalities, which are quantitative enough to be applied non-asymptotically to good effect, thus removing the need to appeal to compactness for that stage of the argument at least.} (but convenient) feature of the argument and can be removed to provide an explicit value of $n_0$ and hence ensure the decidability of the conjecture.  However, this value is likely going to be far too large to be able to make the verification of Sendov's conjecture practical\footnote{For instance, the results of Chalebgwa \cite{chalebgwa}, which currently is the only result with explicit bounds that covers the case of intermediate values of $a$, only becomes non-vacuous for $n \geq 2.8 \times 10^7$.} without further ideas.

As is traditional in the literature on this conjecture, we shall normalize $f$ to be monic (by multiplying $f$ by a constant), and also normalize $\zero_0$ to be a real number $a \in [0,1]$ (by applying a rotation around the origin).

Our arguments will use compactness methods (or more precisely ``compactness and contradiction'' methods), so it is convenient to rephrase the above theorem in the following equivalent asymptotic\footnote{Alternatively, one could use the framework of nonstandard analysis and derive a contradiction using a single unbounded (nonstandard) degree $n$ rather than a sequence of (standard) degrees $n$ going off to infinity.  Related to this, one could work with an ultrafilter of sets of natural numbers rather than with a sequence of natural numbers that is repeatedly refined to a subsequence as the argument progresses.  We leave these reformulations of the argument to the interested reader; they alter the notation used in the proofs but do not otherwise make significant changes to the core underlying ideas of those proofs.} contradiction form:

\begin{theorem}[Main theorem, asymptotic contradiction form]\label{main-contr}  Let $n$ range over a sequence of natural numbers going to infinity.  For each $n$ in this sequence, let $f = f^{(n)}$ be a monic polynomial of degree $n$ with all zeroes in $\overline{D(0,1)}$, and let $a = a^{(n)} \in [0,1]$ be such that $f(a)=0$.  Suppose also that for every $n$ in the sequence, $f'$ has no zeroes in $\overline{D(a,1)}$.  Then one can derive a contradiction.
\end{theorem}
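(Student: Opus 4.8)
The plan is to pass to a subsequence along which $a=a^{(n)}$ converges to some $a_*\in[0,1]$, and derive a contradiction separately in the three regimes $a_*\in(0,1)$, $a_*=1$, $a_*=0$. If $a_*\in(0,1)$ then for all large $n$ the point $a$ lies in a fixed compact subinterval $[\eps,1-\eps]\subset(0,1)$, and the theorem of D\'egot (made quantitative by Chalebgwa) that Sendov's conjecture holds for such $a$ and all sufficiently large $n$ gives the contradiction. If $a_*=1$, a refinement of Miller's argument disposes of the range in which $a$ stays at distance $\gg 1/n$ from $1$, while the results of Chijiwa handle the complementary range where $a$ is within $O(1/n)$ of $1$. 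The substance of the paper is the remaining case $a_*=0$, on which I focus below, assuming henceforth $a\to0$.

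First I would extract some structure. The zero $a$ must be simple, for otherwise $f'(a)=0$ and $a\in\overline{D(a,1)}$, contrary to hypothesis. By Gauss--Lucas every zero of $f'$ lies in $\overline{D(0,1)}$, hence in the crescent $\overline{D(0,1)}\setminus D(a,1)$; in particular $|\zeta-a|>1$ for every zero $\zeta$ of $f'$. Two elementary consequences follow. Writing $f'(z)=n\prod_\zeta(z-\zeta)$, we get $|f'(a)|=n\prod_\zeta|a-\zeta|\ge n$; comparing with $|f'(a)|=\prod_{z_j\ne a}|a-z_j|\le(1+a)^{n-1}$, where $z_1=a,z_2,\dots,z_n$ are the zeros of $f$, forces $a\gg(\log n)/n$, so we may assume $(\log n)/n\ll a\to0$. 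Secondly, a standard localized form of Gauss--Lucas (if $f$ has $m$ zeros in $D(z_0,\rho)$ then $f'$ has at least $m-1$ zeros in $D(z_0,C\rho)$), applied with a small fixed $\rho$ and $z_0$ ranging over $\overline{D(a,1-C\rho)}$ and using that $f'$ has no zeros in $\overline{D(a,1)}$, shows that the zeros of $f$ in the bulk region $\overline{D(0,1-C\rho)}$ are $\rho$-separated, hence $O(1)$ in number; in particular $a$ is the only zero of $f$ in a fixed disk $D(0,\eps_0)$.

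Next I would run a compactness argument. Along a further subsequence the normalized zero-counting measures $\mu_n:=\tfrac1n\sum_j\delta_{z_j}$ and $\nu_n:=\tfrac1{n-1}\sum_\zeta\delta_\zeta$ converge weakly to probability measures $\mu,\nu$ on $\overline{D(0,1)}$; since the crescent containing the zeros of $f'$ shrinks as $a\to0$ to the closed left semicircle $\Gamma:=\{e^{i\theta}:\pi/2\le\theta\le3\pi/2\}$, the measure $\nu$ is supported on $\Gamma$. A short computation gives $\tfrac1n\log|f'/f|\to0$ locally uniformly on $\{|z|>1\}$, so $\tfrac1n\log|f|$ and $\tfrac1n\log|f'|$ have the same limit there, i.e. the logarithmic potentials of $\mu$ and $\nu$ agree outside $\overline{D(0,1)}$. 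By balayage, $\mu$ and $\nu$ have the same sweep onto $\partial D(0,1)$; as $\nu$ is already supported on $\partial D(0,1)$ it equals its own sweep, and since the sweep of any mass strictly inside $D(0,1)$ has strictly positive density on all of $\partial D(0,1)$ whereas that of $\mu$ equals $\nu$, supported on the proper subarc $\Gamma$, the measure $\mu$ carries no mass in the open disk. Hence $\mu=\nu$ is a probability measure supported on the left semicircle $\Gamma$.

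The endgame is to produce, nonetheless, a zero of $f'$ in $\overline{D(a,1)}$. I would study $\tfrac{f'}{f}(z)=\tfrac1{z-a}+\tfrac{h'}{h}(z)$ near the origin, where $f=(z-a)h$; by the bulk estimate $h'/h$ is holomorphic on $D(0,\eps_0)$ and $\tfrac1n\tfrac{h'}{h}\to C^\mu$ there, with $C^\mu(z)=\int\tfrac{d\mu(w)}{z-w}$ and $C^\mu(0)=-\overline c$ for $c:=\int w\,d\mu(w)$. If $c\ne0$, then on the circle $|z-a|=2/(n|c|)$ the term $h'/h$ dominates $1/(z-a)$, and a Rouch\'e/argument-principle count (this circle lies in $D(0,\eps_0)$, $h'/h$ is there zero- and pole-free, and $a$ is the only zero of $f$ inside) yields a zero of $f'$ in $D(a,1)$, a contradiction; hence $c=0$, and since $\mu$ is supported on $\Gamma$ (where $\Re w\le0$) with $\Re c=\Im c=0$ this forces $\mu=\tfrac12(\delta_i+\delta_{-i})$. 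In this degenerate case $C^\mu(z)=z/(z^2+1)$ vanishes only at $0$, so instead I would rescale about the origin at the scale $r_n$ balancing the zero $a$, the pole $1/(z-a)$, and the leading behaviour of $h'/h$ -- namely $r_n\asymp1/\sqrt n$ when $a\lesssim1/\sqrt n$ and $r_n\asymp1/(na)$ when $a\gtrsim1/\sqrt n$ -- and show that the normalized $f(r_nu)$ converges locally uniformly to an entire function of the form $P(u)e^{\beta u^2}$ with $P$ a polynomial of degree $O(1)$ (the Gaussian from the mass at $\pm i$, the polynomial from the $O(1)$ bulk zeros of $f$, one being $a$); since $(Pe^{\beta u^2})'=(P'+2\beta uP)e^{\beta u^2}$ always has a zero, Hurwitz's theorem yields a zero of $f'$ at a point $z_n=r_nu_n\to0$, which for large $n$ lies in $\overline{D(a,1)}$ -- the final contradiction. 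The hard part is exactly this endgame in the degenerate case: the sought zero of $f'$ sits at a scale tending to $0$ precisely where the weak convergence $\tfrac1n\tfrac{f'}{f}\to C^\mu$ carries no effective rate, so the compactness must be upgraded to a genuine rescaled limit theorem -- with uniform control of the error term, of the $O(1)$ bulk zeros near $0$, and of the aggregate influence of the zeros clustering at $\pm i$ -- together with a careful treatment of the transition between the two rescalings as $a$ crosses the threshold $1/\sqrt n$.
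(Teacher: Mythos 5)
Your high-level decomposition matches the paper's: pass to subsequences, invoke D\'egot--Chalebgwa for $a^{(\infty)}\in(0,1)$, appeal to Miller-type analysis and Chijiwa for $a^{(\infty)}=1$, and do new work near $a^{(\infty)}=0$ via balayage and the argument principle applied to $\tfrac1n f'/f$.  The balayage step in your $a^{(\infty)}=0$ argument (potentials of $\mu$ and $\nu$ agree outside $\overline{D(0,1)}$, the lune forces $\nu$ onto the left semicircle, and a swept measure of interior mass has full support, so $\mu=\nu$ lives on the semicircle) is essentially the paper's Theorem \ref{limit-thm}(i).  But there are real gaps after that.

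\textbf{The ``$\rho$-separation'' claim is false.}  You assert a ``localized Gauss--Lucas'' of the form: if $f$ has $m$ zeros in $D(z_0,\rho)$ then $f'$ has $\ge m-1$ zeros in $D(z_0,C\rho)$ with $C$ absolute.  This fails: $f(z)=z^n-1$ has adjacent zeros at distance $\asymp 1/n$, yet every critical point is at the origin, distance $1$ away, so $C$ must grow like $n$ (cf.\ Grace--Heawood, whose radius carries a $\cot(\pi/n)$ factor).  Consequently your conclusion that the zeros of $f$ in the bulk are $\rho$-separated, hence $O(1)$ in number, and in particular that $a$ is the only zero of $f$ in a fixed disk $D(0,\eps_0)$, does not follow.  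Since this fact underlies both the holomorphy of $h'/h$ on $D(0,\eps_0)$ and the upgrade of $L^1_{\mathrm{loc}}$ convergence of $\tfrac1n f'/f$ to locally uniform convergence there, the rest of the endgame is unsupported.  The paper instead proves only the much weaker quantitative bound $\P(\zeta\in K)\lesssim a+\log n/n^{1/3}$ (so $o(n/\log n)$ zeros in a fixed compact $K$ of the disk), and then uses a Markov-inequality argument over radii $r\in[r_1,r_2]$ to find a single good circle on which $\tfrac1n f'/f$ is uniformly close to $s_{\zeta^{(\infty)}}$.

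\textbf{The degenerate case $c=0$ is not handled, and the proposed fix is both incomplete and unnecessary.}  You split on whether $c=\int w\,d\mu$ vanishes, and when $c=0$ you propose a genuine rescaled limit theorem producing a Gaussian-times-polynomial profile; you candidly flag this as the hard part.  The paper avoids the dichotomy altogether with a cleaner observation: $s_{\zeta^{(\infty)}}$ is holomorphic on the connected set $\C\setminus C$ and behaves like $1/z$ at infinity, so it is not identically zero and its zeros are isolated; one can therefore pick a fixed annulus $r_1\le|z|\le r_2<1/2$ on which $|s_{\zeta^{(\infty)}}|\sim 1$.  The winding number $m$ of $s_{\zeta^{(\infty)}}$ on such a circle equals the number of its zeros in $D(0,r_1)$, hence $m\ge 0$ automatically.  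Rouch\'e then transfers this to $\tfrac1n f'/f$, so $\#\{\text{zeros of }f'\}-\#\{\text{zeros of }f\}$ in $D(0,r)$ equals $m\ge 0$; but the first count is $0$ and the second is $\ge 1$, forcing $m\le -1$.  No rescaling and no case-split on $c$ are needed: you do not need to show the Stieltjes transform is nonvanishing at the origin, only that it is nonvanishing on some fixed annulus, and the argument-principle inequality $m\ge 0$ does all the remaining work.

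\textbf{The $a^{(\infty)}=1$ case cannot be waved away.}  Miller's and V\^aj\^aitu--Zaharescu's original results cover only $a\ge 1-\eps_n$ for some unspecified (and very small) $\eps_n$, and Chijiwa/Kasmalkar cover $a\ge 1-n^{-O(1)}4^{-n}$ resp.\ $a\ge 1-O(n^{-12}/\log n)$; none of these cover, say, $a=1-1/n$.  The paper's Theorem \ref{contra-circle}, which handles $1-o(1)\le a\le 1-\eps_0^n$, is a genuinely new refinement of Miller's argument requiring the uniform-convergence structure (Proposition \ref{conv}), Taylor expansions accurate to $o(\sigma^2)$, and a second-Fourier-coefficient computation exploiting the geometry of the arc $\overline{D(0,1)}\cap\partial D(1,1)$.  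A complete proof must supply this; ``a refinement of Miller's argument disposes of the range $\gg 1/n$ from $1$'' is precisely the theorem that needs proving.

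Minor: your lower bound $a\gg(\log n)/n$ is correct but not used by the paper's argument, which treats all $a=o(1/\log n)$ uniformly (including tiny $a$) once the Bojanov-type bound or Gauss--Lucas disposes of $a\le 1/(n-1)$.
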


It is clear that Theorem \ref{main} implies Theorem \ref{main-contr}.  Conversely, if Theorem \ref{main} failed, then by taking a sequence of normalized counterexamples of increasingly large degree one obtains a sequence of $n$, and a polynomial $f = f^{(n)}$ and real number $a = a^{(n)}$ indexed by this sequence obeying the hypotheses of Theorem \ref{main-contr}, so that the latter theorem would fail also.  This gives the desired equivalence.  For us, the advantages of working in
this asymptotic contradiction framework are that one can take advantage of asymptotic notation such as $o(1)$, and exploit various compactness theorems to extract useful limiting objects as $n \to \infty$ in which all the $o(1)$ errors are eliminated, after passing to a subsequence if necessary.  Such arguments can be viewed as a more flexible form of the more classical ``normal families arguments'' that one sees in the complex analysis literature, in which Montel's theorem is used as the primary source of compactness, but in our framework we are free to appeal to a wider range of compactness theorems (such as Prokhorov's theorem or the Bolzano--Weierstrass theorem) than just Montel's theorem.

We always reserve the right to pass to a subsequence of $n$ as necessary in order to improve the convergence as $n \to \infty$.
For instance, by the Bolzano--Weierstrass theorem, and passing to a subsequence if necessary, we may assume the real numbers $a = a^{(n)} \in [0,1]$ converge to a fixed\footnote{As discussed in Section \ref{notation-sec}, we use ``fixed'' in this paper to refer to any expression that does not depend on the asymptotic parameter $n$.  In the language of nonstandard analysis, $a^{(\infty)}$ is the \emph{standard part} of $a$.} limit $a^{(\infty)} \in [0,1]$.  Using the asymptotic notation conventions from Section \ref{notation-sec}, we thus have
$$ a = a^{(\infty)} + o(1).$$
Henceforth $f, a, a^{(\infty)}$ will always be assumed to obey the above properties.  

At this point it is worth pointing out two key near-counterexamples to Theorem \ref{main-contr}:

\begin{example}[Near-counterexample near unit circle]\label{ex-1}  For each $n \in \N$, set $f(z) \coloneqq z^n - 1$, and $a \coloneqq 1$.  Then all the zeroes of $f$ lie in $\overline{D(0,1)}$, and $f'$ just barely has zeroes in $\overline{D(a,1)}$ since the zeroes are all at the origin which lies on the boundary circle $\partial D(a,1)$.  In this case we have $a^{(\infty)}=1$.
\end{example}

\begin{example}[Near-counterexample near origin]\label{ex-2}  For each $n \in \N$, set $f(z) \coloneqq z^n - z$, and $a \coloneqq 0$.  Then all the zeroes of $f$ lie in $\overline{D(0,1)}$ and $f'$ just barely has zeroes in $\overline{D(a,1)}$ since the zeroes all lie at distance $n^{-\frac{1}{n-1}} = 1 - O(\frac{\log n}{n})$ from $a$.  In this case we have $a^{(\infty)}=0$.
\end{example}

These examples hint that the cases $a^{(\infty)}=0$, $0 < a^{(\infty)} < 1$, and $a^{(\infty)} = 1$ may need to be treated by separate arguments, with the two extremal cases $a^{(\infty)}=0,1$ being the most difficult; as we shall see, the $a^{(\infty)}=1$ case turns out to be particularly delicate.  This is indeed reflected by past work on this area, a selection of which we now present in the language of the asymptotic contradiction framework.  With regards to the $a^{(\infty)}=0$ case, we have the following existing results towards Theorem \ref{main-contr}:

\begin{itemize}
\item The Gauss-Lucas theorem establishes Theorem \ref{main-contr} when $a=0$.  Schmeisser \cite{schmeisser} also established Theorem \ref{main-contr} in this case if one only excludes zeroes of $f'$ in $D(a,1)$ rather than $\overline{D(a,1)}$.
\item From the final displayed equation in page 66 of the survey \cite{bojanov-survey} of Bojanov, Theorem \ref{main-contr} is established when $a \leq \frac{1}{n-1}$.  
\end{itemize}

In the $0 < a^{(\infty)} < 1$ regime (as well as a slight enlargement thereof), we have the following results towards Theorem \ref{main-contr}:

\begin{itemize}
\item D\'egot \cite[Theorem 9]{degot} established Theorem \ref{main-contr} when $0 < a^{(\infty)} < 1$ (or equivalently, if $c-o(1) \leq a \leq 1-c+o(1)$ for some fixed constant $c>0$).
\item By refining D\'egot's methods, Chalebgwa \cite[Theorem 1.1]{chalebgwa} established Theorem \ref{main-contr} if 
\begin{equation}\label{c7}
C n^{-1/7} \leq a \leq 1 - C n^{-1/4}
\end{equation}
for an explicit\footnote{More precisely, Chalebgwa established Theorem \ref{main-contr} whenever $n \geq \frac{20800}{a^7 (1-a)^4}$.} absolute constant $C$.  A somewhat similar result also appears in \cite[p. 217]{sheil} assuming some lower bound on the distance between $a$ and the other zeroes $\lambda$ of $f$.
\end{itemize}

Finally, in the $a^{(\infty)}=1$ regime, the following results were obtained towards Theorem \ref{main-contr}:

\begin{itemize}
\item Rubinstein \cite{rubinstein}, Goodman--Rahman--Ratti \cite{goodman}, and Joyal \cite{joyal} all established Theorem \ref{main-contr} when $a=1$.
\item Miller \cite{miller} and V\^aj\^aitu--Zaharescu \cite{vz} independently improved these results by establishing Theorem \ref{main-contr} when $a \geq 1 - \eps_n$, for some $\eps_n>0$ depending only on $n$.
\item Chijiwa \cite{chijiwa} refined Miller's methods and established Theorem \ref{main-contr} when 
\begin{equation}\label{c8}
a \geq 1-\frac{1}{2n^9 4^n}.
\end{equation}
\item Kasmalkar \cite{kasmalkar} improved the analysis of Chijiwa to establish Theorem \ref{main-contr} when
\begin{equation}\label{c8a}
a \geq 1-\frac{90}{n^{12} \log n}.
\end{equation}

\end{itemize}

These above results, as well as the new results in this paper, are summarized schematically in Figure \ref{fig:ranges}. We also remark that as a consequence of the results of Bojanov, Rahman, and Satti \cite{brs} that Theorem \ref{main-contr} can be established for all values of $a$ if one assumes that $f'$ has no zeroes in the slight enlargement $\overline{D(a, 2^{1/n})}$ of the disk $\overline{D(a,1)}$.  Shrinking the radius $2^{1/n} = 1 + O(1/n)$ here to $1$ turns out to be remarkably challenging, even in the asymptotic regime when $n$ is sufficiently large.

In this paper we will supplement the above partial results towards Theorem \ref{main-contr} with the following two additional results:

\begin{itemize}
\item In Theorem \ref{contra-origin}, we establish Theorem \ref{main-contr} assuming that 
\begin{equation}\label{c9}
a = o(1/\log n)
\end{equation}
for all $n$ in the ambient sequence (so in particular $a^{(\infty)} = 0$).
\item In Theorem \ref{contra-circle}, we establish Theorem \ref{main-contr} assuming that there exists a fixed $\eps_0>0$ such that
\begin{equation}\label{c10}
1-o(1) \leq a \leq 1 - \eps_0^n 
\end{equation}
for all $n$ in the ambient sequence (so in particular $a^{(\infty)}=1$).
\end{itemize}

These two results, combined with the prior result \eqref{c7} of Chalebgwa \cite{chalebgwa} and either the result \eqref{c8} of  Chijiwa \cite{chijiwa} or the result \eqref{c8a} of Kasmalkar \cite{kasmalkar}, furnish a complete proof of Theorem \ref{main-contr} and hence Theorem \ref{main}, since after passing to a subsequence if necessary one can always arrange matters so that one of the four conditions \eqref{c7}, \eqref{c8}, \eqref{c9}, \eqref{c10} (say) holds for all $n$ in the surviving subsequence; see Figure \ref{fig:ranges}.  In fact a modification of the arguments here can in fact cover the entire range $[0,1]$ without the assistance of the past results \eqref{c7}, \eqref{c8}, \eqref{c8a}, thus making the proof of Theorem \ref{main} more self-contained; see Remarks \ref{degot-time}, \ref{low-cover}, \ref{miller-time}.

\begin{figure} [t]
\centering
\includegraphics[width=4in]{./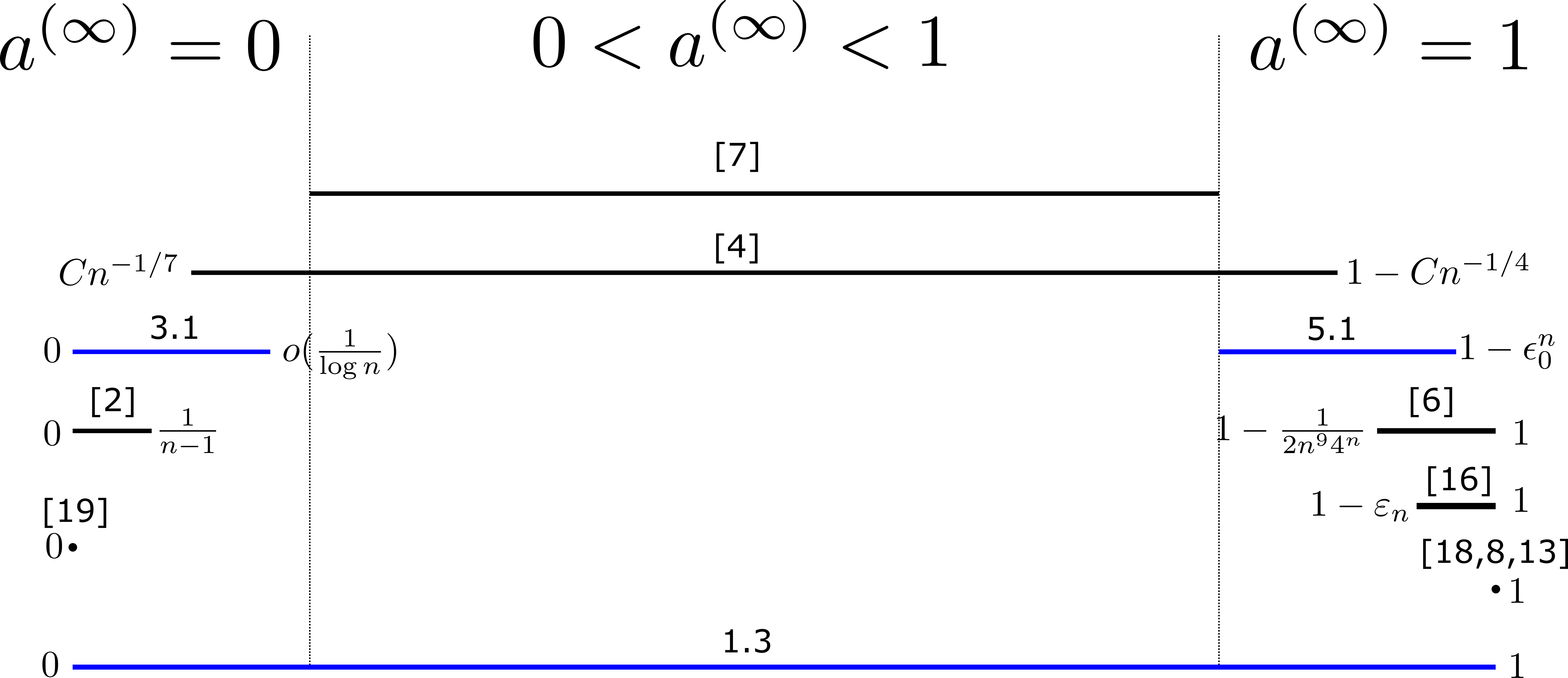}
\caption{A schematic depiction (not drawn to scale) of the ranges of $a$ covered by current (in blue) and past (in black) partial results towards Theorem \ref{main-contr}.  In particular, Theorem \ref{main-contr} follows by combining Theorems \ref{contra-origin}, \ref{contra-circle} with various combinations of the previous work in \cite{degot}, \cite{chalebgwa}, \cite{kasmalkar}, \cite{chijiwa}, \cite{vz}, \cite{miller} or Remarks \ref{degot-time}, \ref{low-cover}, \ref{miller-time}; for instance one can use the results in \cite{chalebgwa}, \cite{chijiwa} to cover all remaining cases.}
\label{fig:ranges}
\end{figure}

To analyze these two new regimes \eqref{c9}, \eqref{c10} it is convenient to use the language of probability theory, particularly in order to access the notion of \emph{convergence in distribution} to extract certain asymptotic profiles.  For each $n$ in the ambient sequence, let $\zero = \zero^{(n)}$ denote a zero of the polynomial $f$, chosen uniformly at random from the $n$ zeroes of $f$ (counting multiplicity), and let $\deriv = \deriv^{(n)}$ denote a random zero of the polynomial $f'$, chosen uniformly at random from the $n-1$ zeroes of $f'$ (again counting multiplicity) and independently of $\zero$.   We permit the underlying probability space $\Omega^{(n)}$ used to define these random variables to vary with $n$, and use $\E = \E^{(n)}$ and $\P = \P^{(n)}$ to denote expectation and probability respectively with respect to this space.  We recall some key convergence concepts regarding convergence of a sequence of complex random variables $X^{(n)}$ (defined on a probability space $\Omega^{(n)}$) to a fixed deterministic limit $c \in \C$ or a fixed random limit $X^{(\infty)}$ (defined on a probability space $\Omega^{(\infty)}$, with expectation $\E^{(\infty)}$ and probability $\P^{(\infty)}$).

\begin{itemize}
\item We say that $X^{(n)}$ \emph{converges uniformly} to $c$ if for every fixed $\eps>0$ one surely has $|X^{(n)}-c| \leq \eps$ whenever $n$ is sufficiently large.
\item We say that $X^{(n)}$ \emph{converges in probability} to $c$ if for every fixed $\eps>0$, $\P^{(n)}( |X^{(n)}-c| \leq \eps ) \to 1$ as $n \to \infty$.
\item We say that $X^{(n)}$ \emph{converges in distribution} to $X^{(\infty)}$ if for every fixed continuous bounded function $\varphi: \C \to \C$, one has $\E^{(n)} \varphi(X^{(n)}) \to \E^{(\infty)} \varphi(X^{(\infty)})$ as $n \to \infty$.  (Equivalently, the law of $X^{(n)}$ converges to the law of $X^{(\infty)}$ in the vague topology.)
\end{itemize}
Note that we do not define a notion of almost sure convergence, since the probability spaces $\Omega^{(n)}$ vary with $n$.  Also observe that if $X^{(\infty)}=c$ is a constant then the notions of convergence in probability and convergence in distribution coincide, but the notion of uniform convergence is strictly stronger. 

It is well known, though usually not expressed in this probabilistic formalism, that the behavior of the polynomial $f$ and its first two derivatives $f', f''$ are closely tied to the behavior of the random variables $\zero, \deriv$.  More precisely, we have the following facts.  Given any complex random variable $\eta$ taking values in a fixed compact set, we define the\footnote{In the literature, the logarithmic potential and Stieltjes transform are often written as functions of the law $\mu_\eta$ of the random variable $\eta$ rather than the random variable itself, but we have chosen in this paper to adopt a probabilistic viewpoint rather than a measure-theoretic one, and are thus downplaying the role of the probability measure $\mu_\eta$.} \emph{logarithmic potential} $U_\eta: \C \to \R$ by
\begin{equation}\label{pot-def}
 U_\eta(z) \coloneqq \E \log \frac{1}{|z-\eta|}
\end{equation}
as well as the closely related \emph{Stieltjes transform} (or \emph{Cauchy transform}) $s_\eta: \C \to \C$ by
\begin{equation}\label{cauchy-def}
 s_\eta(z) \coloneqq \E \frac{1}{z-\eta};
\end{equation}
these functions are well-defined for almost every $z$ (and are locally integrable).  We remark that the Stieltjes transform $s_\eta$ and the law $\mu_\eta$ of $\eta$ are essentially the gradient and Laplacian respectively of the logarithmic potential $U_\eta$, in the sense that
\begin{equation}\label{sz-u}
 s_\eta(x+iy) = - \frac{\partial}{\partial x} U_\eta(x+iy) + i \frac{\partial}{\partial y} U_\eta(x+iy) 
\end{equation}
and
\begin{equation}\label{mu-u}
\mu_\eta = \frac{-1}{2\pi} \left(\frac{\partial^2}{\partial x^2} + \frac{\partial^2}{\partial y^2}\right) U_\eta = \frac{1}{2\pi} \left(\frac{\partial}{\partial x} + i \frac{\partial}{\partial y}\right) s_\eta
\end{equation}
(in a distributional sense at least).  In particular, away from the essential range of $\eta$ (i.e., the support of $\mu_\eta$), $U_\eta$ is harmonic and $s_\eta$ is holomorphic. The logarithmic potential $U_\eta$ is also superharmonic on all of $\C$, but we will not directly utilise this fact.

\begin{lemma}[Basic relations between $\zero,\deriv$ and $f$]\label{basic}\ 
\begin{itemize}
\item[(i)]  (Support) $\zero$ surely lies in the disk $\overline{D(0,1)}$, and $\deriv$ surely lies in the lune $\overline{D(0,1)} \backslash \overline{D(a,1)}$.
\item[(ii)]  (Matching mean) One has
\begin{equation}\label{mean}
\E \zero = \E \deriv.
\end{equation}
\item[(iii)]  (Logarithmic potential) For almost all $z$, one has
\begin{equation}\label{lf}
U_\zero(z) = - \frac{1}{n} \log |f(z)|
\end{equation}
and
\begin{equation}\label{lfp}
U_\deriv(z) = \frac{\log n}{n-1} - \frac{1}{n-1} \log |f'(z)|.
\end{equation}
\item[(iv)]  (Logarithmic derivative) For almost all $z$, one has
\begin{equation}\label{logderiv-f}
s_\zero(z) = \frac{1}{n} \frac{f'(z)}{f(z)}
\end{equation}
and
\begin{equation}\label{logderiv-fp}
s_\deriv(z) = \frac{1}{n-1} \frac{f''(z)}{f'(z)}.
\end{equation}
\item[(v)] (Relation between $\zero$ and $\deriv$) For almost all $z$, one has
\begin{equation}\label{uzs}
 U_{\zero}(z) - \frac{n-1}{n} U_{\deriv}(z) = \frac{1}{n} \log \left|s_\zero(z)\right|
\end{equation}
and
\begin{equation}\label{sz-mod}
 s_{\zero}(z) - \frac{n-1}{n} s_{\deriv}(z) = -\frac{1}{n} \frac{s'_\zero(z)}{s_\zero(z)}
\end{equation}
\item[(vi)] (Integrated log-derivative)  One has
\begin{equation}\label{integ-f}
f(\gamma(1)) = f(\gamma(0)) \exp\left( n \int_\gamma s_\zero(z)\ dz \right)
\end{equation}
for any contour $\gamma \colon [0,1] \to \C$ that avoids the zeroes of $f$, and similarly
\begin{equation}\label{integd-f}
f'(\gamma(1)) = f'(\gamma(0)) \exp\left( (n-1) \int_\gamma s_\deriv(z)\ dz \right)
\end{equation}
for any contour $\gamma \colon [0,1] \to \C$ that avoids the zeroes of $f'$.
\end{itemize}
The identities in (ii)-(vi) are valid for all monic polynomials $f$ of degree $n$ (not just those obeying the hypotheses of Theorem \ref{main-contr}).
\end{lemma}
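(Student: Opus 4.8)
The plan is to verify each identity (ii)–(vi) directly from the factorization $f(z) = \prod_{j=1}^n (z - \lambda_j)$, where $\lambda_1,\dots,\lambda_n$ are the zeroes of $f$ (with multiplicity), so that $\zero$ is uniform on $\{\lambda_1,\dots,\lambda_n\}$, and similarly $f'(z) = n\prod_{k=1}^{n-1}(z-\mu_k)$ with $\deriv$ uniform on $\{\mu_1,\dots,\mu_{n-1}\}$; part (i) is just the Gauss--Lucas theorem together with the hypothesis that $f'$ has no zeroes in $\overline{D(a,1)}$. For (ii), taking the logarithmic derivative of both factorizations and comparing the coefficient of $z^{-1}$ in the Laurent expansion at infinity gives $\sum_j \lambda_j = \sum_k \mu_k + (\text{the leading correction})$; more directly, Vieta applied to $f$ and $f'$ shows $\frac1n\sum_j \lambda_j$ and $\frac1{n-1}\sum_k\mu_k$ are both equal to $-\frac1n$ times the second-leading coefficient of $f$ divided by the leading one, hence equal, which is exactly \eqref{mean}. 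For (iii), \eqref{lf} is immediate from $\log|f(z)| = \sum_j \log|z-\lambda_j| = -n\,U_\zero(z)$, and \eqref{lfp} follows the same way from $f'(z) = n\prod_k(z-\mu_k)$, the extra $\frac{\log n}{n-1}$ coming from the leading coefficient $n$. Part (iv) is the partial-fractions identity $\frac{f'}{f}(z) = \sum_j \frac{1}{z-\lambda_j} = n\,s_\zero(z)$ and likewise $\frac{f''}{f'}(z) = \sum_k \frac1{z-\mu_k} = (n-1)s_\deriv(z)$.

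For (v): differentiating \eqref{logderiv-f} gives $s_\zero'(z) = \frac1n\big(\frac{f''}{f} - (\frac{f'}{f})^2\big)$, so $\frac{s_\zero'}{s_\zero} = \frac{f''}{f'} - \frac{f'}{f}$; combining with \eqref{logderiv-fp} and \eqref{logderiv-f} yields $\frac{s_\zero'}{s_\zero} = (n-1)s_\deriv - n s_\zero$, which rearranges to \eqref{sz-mod}. Then \eqref{uzs} follows by taking real parts of the primitive: from \eqref{lf}, \eqref{lfp} we have $U_\zero - \frac{n-1}{n}U_\deriv = -\frac1n\log|f| + \frac{n-1}{n}\cdot\frac1{n-1}\log|f'| - \frac{n-1}{n}\cdot\frac{\log n}{n-1} = \frac1n\log\frac{|f'|}{n|f|} = \frac1n\log|s_\zero|$ using \eqref{logderiv-f}; this is cleaner than integrating \eqref{sz-mod} and avoids tracking the additive constant. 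For (vi), \eqref{integ-f} is just the fundamental theorem of calculus applied to a branch of $\log f$ along $\gamma$: since $\gamma$ avoids the zeroes of $f$, $\frac{d}{dt}\log f(\gamma(t)) = \frac{f'}{f}(\gamma(t))\gamma'(t) = n\,s_\zero(\gamma(t))\gamma'(t)$, so $\log f(\gamma(1)) - \log f(\gamma(0)) = n\int_\gamma s_\zero\,dz$; exponentiating gives the claim (the multivaluedness of $\log$ is harmless since we exponentiate). The identity \eqref{integd-f} is the same argument applied to $f'$.

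The only genuine subtlety — and the step I would be most careful about — is the "almost all $z$" bookkeeping: the identities \eqref{lf}–\eqref{sz-mod} fail on the measure-zero sets where $f$, $f'$, or $s_\zero$ vanish (equivalently where $\zero$ or $\deriv$ is supported, so the potentials/transforms are genuinely singular), so one must phrase each as an identity of locally integrable functions valid off these finite exceptional sets, matching the conventions established for \eqref{pot-def}–\eqref{cauchy-def}. Everything else is elementary algebra of rational functions, and the claim at the end that (ii)–(vi) hold for all monic degree-$n$ polynomials is clear since the only place the Sendov hypotheses entered was the lune containment in (i).
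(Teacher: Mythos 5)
Your proposal is correct and follows essentially the same route as the paper: factor $f$ and $f'$, read off (ii) from the subleading coefficients (i.e.\ Vieta), take logarithms for (iii), take logarithmic derivatives for (iv), combine \eqref{lf}, \eqref{lfp}, \eqref{logderiv-f} for \eqref{uzs} and differentiate \eqref{logderiv-f} via the quotient rule for \eqref{sz-mod}, and integrate a branch of $\log f$ along $\gamma$ for (vi). The only cosmetic differences are that you prove \eqref{sz-mod} before \eqref{uzs} rather than after, and you use the branch-of-$\log$ argument for (vi) where the paper offers that as its alternative to the integrating-factor ODE phrasing; your closing remark about the ``almost all $z$'' bookkeeping and about which parts actually use the Sendov hypotheses is a correct reading of the paper's conventions.
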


\begin{proof}  Part (i) is clear from the hypotheses on $f$ and the Gauss--Lucas theorem.  For parts (ii)-(iv), we use the fundamental theorem of algebra to write
$$ f(z) = (z-\zero_1) \dots (z-\zero_n)$$
and
$$ f'(z) = n (z-\deriv_1) \dots (z-\deriv_{n-1})$$
where $\zero_1,\dots,\zero_n$ (resp. $\deriv_1,\dots,\deriv_{n-1}$) are the zeroes of $f$ (resp. $f'$) counting multiplicity.  Comparing the top two leading coefficients of these polynomials gives \eqref{mean}; taking logarithms of the magnitudes gives \eqref{lf}, \eqref{lfp}; and taking logarithmic derivatives (or \eqref{sz-u}) gives \eqref{logderiv-f}, \eqref{logderiv-fp}.  By combining \eqref{lf}, \eqref{lfp}, \eqref{logderiv-f} we obtain \eqref{uzs}, and by combining \eqref{logderiv-f}, \eqref{logderiv-fp}, and the derivative of \eqref{logderiv-f} using the quotient rule we obtain \eqref{sz-mod} (alternatively, one could apply the Cauchy--Riemann operator $\frac{\partial}{\partial x} + i \frac{\partial}{\partial y}$ to \eqref{uzs} and use \eqref{sz-u}).  Finally, one obtains \eqref{integ-f} by viewing \eqref{logderiv-f} as an ODE for the function $f$ and solving via the method of integrating factors; alternatively, one can partition $\gamma$ into small contours on which one has a branch of $\log f$ and apply the fundamental theorem of calculus.  The identity \eqref{integd-f} is established similarly.
\end{proof}

\begin{remark} By using Grace's theorem \cite{grace} (in the special case given in \cite[Satz 5]{szego}) one can obtain further ``zero-free regions'' for $f$ that allow one to reduce the size of the support of $\zero$ to a region smaller than $\overline{D(0,1)}$; see for instance
\cite[Theorem 4]{degot} for an example of such a zero-free region.  Some quantitative zero-free regions of this nature will be employed later in our arguments (notably in \eqref{similar}).  As already noted and exploited by many previous authors (e.g., \cite[Remark 3]{degot}), the zeroes $\zero$ of $f$ will in fact tend to cluster near the boundary $\partial D(0,1)$ of the disk, while the zeroes $\deriv$ of $f'$ will tend to cluster near the right boundary $\overline{D(0,1)} \cap \partial D(a,1)$ of the lune; we will quantify this phenomenon in Theorem \ref{limit-thm} below.
\end{remark}

\begin{remark}  In the limit $n \to \infty$, the equation \eqref{sz-mod} becomes (formally, at least) a partial differential equation for the Stieltjes transform $s_\zero(z)$ under the action of repeated differentiation $f \mapsto f^{(k)}$ of the polynomial, with the renormalised order of differentiation $k/n$ playing the role of a time variable.  See the recent papers \cite{steiner}, \cite{kab} for further discussion.  The same equation also arises in free probability and random matrix theory in the context of the minor process (or the operation of fractional free convolution powers); see \cite{dima}.  However, we will not use the equation \eqref{sz-mod} directly, preferring to work instead with its integrated form \eqref{uzs}.
\end{remark}

We will be able to characterize the limiting behavior of the random variables $\zero, \deriv$, or equivalently the asymptotic distribution of the zeroes of $f,f'$.  By Prokhorov's theorem, we know that after passing to a subsequence, the random variables $\zero, \deriv$ converge in distribution to (fixed) random variables $\zero^{(\infty)}, \deriv^{(\infty)}$ respectively on some probability space $\Omega^{(\infty)}$ taking values in $\overline{D(0,1)}$ respectively, thus
\begin{equation}\label{convert}
 \E \varphi(\zero) = \E \varphi(\zero^{(\infty)}) + o(1); \quad \E \varphi(\deriv) = \E \varphi(\deriv^{(\infty)}) + o(1)
\end{equation}
for any fixed continuous function $\varphi: \C \to \C$.  (One could also ensure that $\zero^{(\infty)}, \deriv^{(\infty)}$ are independent of each other, but we will not need to do so here.)  In view of Lemma \ref{basic}, the random variables $\zero^{(\infty)}, \deriv^{(\infty)}$ carry a lot of information about the ``macroscopic'' behaviour of $f$ and its first two derivatives in the asymptotic limit $n \to \infty$.  For instance $\frac{1}{n} \log \frac{1}{|f(z)|}$ (resp. $\frac{1}{n-1} \log \frac{n}{|f'(z)|}$) will converge locally in $L^1(\C)$ to the logarithmic potential $U_{\zero^{(\infty)}}$ (resp. $U_{\deriv^{(\infty)}}$); see Section \ref{limit-sec}.  One can view the random variables $\zero^{(\infty)}, \deriv^{(\infty)}$ as an abstract ``limit'' of the increasingly high degree polynomials $f = f^{(n)}$, analogously to how graphons can be viewed as limits of sequences of increasingly large graphs \cite{lovasz}.

\begin{example} In Example \ref{ex-1}, $\zero^{(\infty)}$ will be uniformly distributed on the unit circle $\partial D(0,1)$, and $\deriv^{(\infty)}$ will equal $0$ almost surely.  In Example \ref{ex-2}, $\zero^{(\infty)}, \deriv^{(\infty)}$ will both be uniformly distributed on the unit circle $\partial D(0,1)$.
\end{example}

Henceforth we pass to a subsequence so that the limiting random variables $\zero^{(\infty)}, \deriv^{(\infty)}$ exist.  The first main step in our arguments is to obtain a fair amount of control on these limiting variables in the endpoint cases $a^{(\infty)}=0,1$, as well as some related estimates:

\begin{theorem}[Limiting measures]\label{limit-thm}\ 
\begin{itemize}
\item[(i)]  If $a^{(\infty)}=0$, then $\zero^{(\infty)}, \deriv^{(\infty)}$ almost surely lie in the semicircle $C \coloneqq \{ e^{i\theta}: \pi/2 \leq \theta \leq 3\pi/2\}$ and have the same distribution. In particular, $\mathrm{dist}(\zero,C)$ converges in probability to zero. In fact, if $K$ is any fixed compact subset of $\overline{D(0,1)} \backslash C$, then we have the more quantitative estimate
\begin{equation}\label{pzeta}
\P( \zero \in K ) \lesssim a + \frac{\log n}{n^{1/3}}
\end{equation}
(see Section \ref{notation-sec} for our conventions on asymptotic notation).
\item[(ii)]  If $a^{(\infty)}=1$, then $\zero^{(\infty)}$ is uniformly distributed on the unit circle $\partial D(0,1)$, and $\deriv^{(\infty)}$ is almost surely zero.  In particular, $|\zero|$ converges in probability to $1$ and $\deriv$ converges in probability to zero.  In fact we have the more quantitative estimates
\begin{equation}\label{zetas}
 \E \log \frac{1}{|\zero|}, \E \log |\deriv - a| \lesssim \frac{1}{n}.
\end{equation}
\end{itemize}
\end{theorem}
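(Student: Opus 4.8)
The plan is to set up a common framework for the two regimes and then specialise. In both cases the driving inputs are the same: the support constraint of Lemma~\ref{basic}(i), which confines $\deriv$ to the lune $\overline{D(0,1)}\setminus\overline{D(a,1)}$; the matching of first moments \eqref{mean} (and, more generally, the matching of all holomorphic moments in the limit, which follows by comparing Taylor coefficients at infinity in \eqref{sz-mod} since the corrections there are $O(1/n)$); the potential-theoretic dictionary \eqref{lf}--\eqref{uzs}; and the harmonicity of $U_\zero$ (resp.\ $U_\deriv$) off $\mathrm{supp}\,\zero$ (resp.\ $\mathrm{supp}\,\deriv$). Note that $a$ is automatically a simple zero of $f$ (otherwise $f'(a)=0$ with $a\in\overline{D(a,1)}$), and that in part~(i) one necessarily has $a>0$ (if $a=0$ the lune is empty, contradicting Lemma~\ref{basic}(i) since $f'$ has $n-1\ge1$ zeroes there). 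I would also record the elementary pointwise bound $|s_\zero(z)|\le\frac1n\sum_i\frac1{|z-\zero_i|}$, whose right-hand side has $L^1_{\mathrm{loc}}$-norm $O(1)$; via \eqref{uzs} this already yields the one-sided inequality $U_{\zero^{(\infty)}}\le U_{\deriv^{(\infty)}}$ a.e., valid in every regime.

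For part~(i): a direct manipulation of $r^2-2ar\cos\theta+a^2>1$ on $\overline{D(0,1)}$ shows that the lune is a crescent of radial width $O(a)$ hugging the semicircle $C$, so surely $\mathrm{dist}(\deriv,C)=O(a)=o(1)$ and hence $\deriv^{(\infty)}\in C$ a.s. The crux of the part is the quantitative estimate \eqref{pzeta}: for compact $K\subset\overline{D(0,1)}\setminus C$ I would bound $\P(\zero\in K)=\frac1n\#\{i:\zero_i\in K\}$ by an argument-principle integral $\frac1{2\pi i}\oint\frac{f'}{f}\,dz$ over the boundary of a thin, $C$-avoiding neighbourhood of $K$, estimating the integrand $s_\zero=\frac1n\frac{f'}{f}$ by means of the fact that $f'$ is zero-free on $\overline{D(a,1)}$ — which covers $\overline{D(0,1)}$ except an $O(a)$-neighbourhood of $C$ — together with Jensen's formula to control $f$ near the unit circle; optimising the geometric parameters should then produce the exponent $\tfrac13$ in \eqref{pzeta} (from balancing an $n\cdot(\text{width})$ loss against an inverse-width loss, with a $\log n$ supplied by the logarithmic potential). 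Given \eqref{pzeta} one has $\mathrm{supp}\,\zero^{(\infty)}\subseteq C\subseteq\partial D(0,1)$; since $\zero^{(\infty)}$ and $\deriv^{(\infty)}$ are then both carried by the unit circle and share all holomorphic moments, and a positive measure on the circle is determined by its Fourier coefficients of nonnegative index (the others being conjugates), they must have the same law. This finishes part~(i), and $\mathrm{dist}(\zero,C)\to0$ in probability follows from \eqref{pzeta} directly.

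For part~(ii): here the lune does not degenerate, but the point $z=1$ — hence a cap of $\overline{D(0,1)}$ near it — lies in $\overline{D(a,1)}$, so $f'$ is zero-free there. Refining the classical $a=1$ arguments (Rubinstein; Goodman--Rahman--Ratti; Joyal; Miller), I would integrate $\frac{f'}{f}$ around $\partial D(a,1)$ using the residue/mean-value identity and the fact that $\Re\frac1{a-\zero_i}>0$ for the zeroes $\zero_i\notin\overline{D(a,1)}$ (which necessarily have $\Re\zero_i<a/2$); feeding this into Jensen's formula on circles about $a$ and about $0$, and invoking a Grace-type zero-free region for $f$ near the origin to keep the zeroes of $f$ (in particular $f(0)$) a definite distance from $0$, I would obtain $U_\deriv(a)=-\E\log|\deriv-a|\ge-O(1/n)$ and $U_\zero(0)=\E\log\frac1{|\zero|}\le O(1/n)$, the saving factor $\log n$ coming precisely from the mismatch between $\deg f'=n-1$ and its leading coefficient $n$ in \eqref{lfp}, and the $O(1/n)$ from the degree in Jensen's formula. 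These are \eqref{zetas}, from which the two convergence-in-probability statements follow by Markov's inequality. To pass to the distributional conclusions, \eqref{zetas} forces $\mathrm{supp}\,\zero^{(\infty)}\subseteq\partial D(0,1)$ and $\mathrm{supp}\,\deriv^{(\infty)}\subseteq\partial D(1,1)\cap\overline{D(0,1)}$ (an arc through the origin); combining this with \eqref{uzs} (which identifies $\mu_{\zero^{(\infty)}}$ as the balayage of $\mu_{\deriv^{(\infty)}}$ onto $\partial D(0,1)$), with the matching of holomorphic moments, and with the observation that a polynomial whose zeroes cluster into two groups is forced by Gauss--Lucas to have a critical point inside $\overline{D(a,1)}$, one pins down $\deriv^{(\infty)}=0$ a.s.\ and hence $\zero^{(\infty)}=\mathrm{Bal}(\delta_0,\partial D(0,1))$, the uniform measure on the circle.

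The step I expect to be the real obstacle, in both parts, is the passage from qualitative limits to the stated rates — $a+\log n/n^{1/3}$ in (i) and $1/n$ in (ii) — since this is exactly where the hypothesis ``$f'$ zero-free on $\overline{D(a,1)}$'' must be used with its full quantitative strength rather than merely in the limit, and where one must also bring in effective zero-free regions for $f$. There are two further pitfalls. In part~(i) one must rule out — before identifying $\mathrm{supp}\,\zero^{(\infty)}$ — the possibility (genuinely realised when $a^{(\infty)}=1$) that $s_{\zero^{(\infty)}}$ degenerates on an open subset of $D(0,1)$, which would break the comparison of potentials. In part~(ii) the subtlety is that $a$ is only close to, not equal to, $1$, so the clean residue computation on $\partial D(a,1)$ and the extremal polynomial $z^n-c$ hold only approximately and every error term has to be tracked — presumably why the cleanest argument works in the range \eqref{c10}, with the remaining range very close to $1$ ceded to the sharper results of Chijiwa and Kasmalkar.
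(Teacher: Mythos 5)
Your high-level plan shares the same starting point as the paper — the support constraint from Lemma~\ref{basic}(i), the potential-theoretic dictionary, matching of moments, and Fourier uniqueness — but in both parts the crucial steps diverge from the paper's, and in each case there is a concrete gap.

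\textbf{Part (i).} You derive only the one-sided inequality $U_{\zero^{(\infty)}}\le U_{\deriv^{(\infty)}}$ a.e.\ from the $L^1_{\mathrm{loc}}$ bound on $s_\zero$. That is not enough: what the paper needs (and proves) is the \emph{equality} $U_{\zero^{(\infty)}}=U_{\deriv^{(\infty)}}$ on the exterior $\overline{D(0,1)}^c$, which is what gives matching of all moments. This comes from observing that for $|z|>1$ the value $s_\zero(z)$ lies in a disk bounded away from $0$ and $\infty$ (with diameter the rotation of $[\frac{1}{|z|+1},\frac{1}{|z|-1}]$), so $\log|s_\zero(z)|=O(\log\frac{1}{|z|-1})$ and the right side of \eqref{uzs} is $O(1/n)$ on any fixed exterior circle. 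The one-sided bound you record does not give moment matching and therefore cannot be the basis for identifying the distributions. More seriously, your proposed route to the quantitative bound \eqref{pzeta} — an argument-principle integral of $f'/f$ over a thin contour around $K$ — is circular: to control $\frac{1}{n}\frac{f'}{f}$ on the contour one needs precisely the information about the zero distribution of $f$ near $K$ that you are trying to establish. The paper sidesteps this entirely by the balayage mechanism: the nearly matching moments $\E\zero^m=\E\deriv^m+O(\frac{m\log m}{n})$ give nearly matching balayages \eqref{balk} onto $\partial D(0,R)$, and then comparing with the balayage of $\deriv$ (which, because $\deriv$ is confined to the thin lune near $C$, deposits mass $O(a+R-1)$ on the relevant arc) and optimising $R=1+n^{-1/3}$ produces \eqref{pzeta}. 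The argument principle appears in the paper only afterwards, in the proof of Theorem~\ref{contra-origin}, with \eqref{pzeta} as an \emph{input} to make the contour well-behaved.

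\textbf{Part (ii).} There is a notation/logic confusion at the heart of your sketch: you invoke ``$\Re\frac{1}{a-\zero_i}>0$ for the zeroes $\zero_i\notin\overline{D(a,1)}$ (which necessarily have $\Re\zero_i<a/2$)'', but the lune constraint and the half-plane estimate $\Re<a/2$ apply to the zeroes of $f'$, not to the zeroes $\zero_i$ of $f$ (which can be anywhere in $\overline{D(0,1)}$, including near $z=1$, and one of which is $a$ itself). The residue-on-$\partial D(a,1)$ computation you describe does not apply to $f'/f$ as written. The paper instead gets \eqref{zetas} from D\'egot's lower bound \eqref{labb} together with $|f'(a)|>n$ (from \eqref{lfp} and $|a-\deriv|>1$), a Taylor/fundamental-theorem-of-calculus step showing $|f'|$ is exponentially small on $[0,1/2]$, and the resulting $|f(0)|\sim1$, $|f'(a)|\sim n$; the $1/n$ rate comes from dividing by $n$ in Lemma~\ref{basic}(iii), not from Jensen's formula, and there is no $\log n$ saving factor in \eqref{zetas}. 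Finally, your passage from \eqref{zetas} to the distributional limits is not airtight: ``matching moments plus support constraints plus a Gauss--Lucas observation'' does not identify $\zero^{(\infty)}$ as the uniform measure nor $\deriv^{(\infty)}$ as $\delta_0$. The paper's identification hinges on an additional interior estimate — $|f|\sim1$ on a fixed disk such as $D(0.5,0.01)$, hence $U_{\zero^{(\infty)}}\equiv0$ there — followed by unique continuation of harmonic functions to force $U_{\zero^{(\infty)}}\equiv0$ on $D(0,1)$ and $U_{\deriv^{(\infty)}}\equiv\log\frac1{|z|}$ on the connected complement of the arc $\overline{D(0,1)}\cap\partial D(1,1)$. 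That unique continuation step (and the topological fact that the arc does not disconnect $\C$) is the missing idea in your sketch.
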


We prove this theorem in Section \ref{limit-sec}.  Our main tools are the identities in Lemma \ref{basic}(iii)-(v) and some potential theory, relying in particular on the unique continuation property of harmonic functions and the basic properties of balayage \cite{gustafsson}; for part (ii) we also use some arguments of D\'egot \cite{degot}.  In particular, we rely heavily on the observation (from \eqref{uzs}) that $\zero$ and $\deriv$ ``look the same from outside $\overline{D(0,1)}$'' in the sense that their balayages to any larger disk $\overline{D(0,R)}$ are very nearly equal.  Theorem \ref{limit-thm} can be compared with \cite[Remark 3]{degot}, which in our language asserts in the $0 < a^{(\infty)} < 1$ case that $|\zero^{(\infty)}| = |\deriv^{(\infty)}-a|=1$ almost surely (or equivalently, that $|\zero|$ and $|\deriv - a|$ both converge in probability to $1$).  Theorem \ref{limit-thm}(ii) is also anticipated in previous work on the $a^{(\infty)}=1$ case; see for instance \cite[p. 633, Lemma 5]{miller} or \cite{chijiwa-disk} for variants of this assertion when $a$ is extremely close to $1$.  There are also analogues in the small $n$ theory; see for instance \cite[Lemma 3.2]{brown-xiang}.  Theorem \ref{limit-thm}(ii) is also consistent with the near-counterexample in Example \ref{ex-1}, as well as the generalisations of this example considered in Section \ref{nearer} below.   Theorem \ref{limit-thm}(i), in contrast, is not compatible with the near-counterexample in Example \ref{ex-2}: the reason being that deleting $\overline{D(a,1)}$ from the unit circle $\partial D(0,1)$ will essentially restrict that circle to the semicircle $C$ when $a$ is small, whereas deleting the slightly smaller disk $D(a,1-\eps)$ (which is the type of region one can delete in the near-counterexample in Example \ref{ex-1}) does not have the same effect.  The error $\frac{\log n}{n^{1/3}}$ term in \eqref{pzeta} could perhaps be improved with more effort, but for our purposes any bound of the form $o(1/\log n)$ will suffice.

In the regime $a = o(1/\log n)$, Theorem \ref{limit-thm}(i) when combined with Theorem \ref{basic}(iv) and the unique continuation properties of holomorphic functions will allow one to obtain enough control on the log-derivative $\frac{f'}{f}$ near the origin that one can establish via the argument principle that the number of zeroes of this log-derivative near this origin, minus the number of poles, is non-negative; but by hypothesis $f'$ will have no zeroes near the origin and $f$ has a zero at $a$, giving the required contradiction.  See Section \ref{origin-sec} for the rigorous details.

In the regime when $1-o(1) \leq a \leq 1 - \eps_0^n$, we use a variant of the analysis of Miller \cite{miller}.  An instructive model case of a near-counterexample in this regime, generalising Example \ref{ex-1}, is presented in Section \ref{nearer}, in which $f'$ has a bounded number of zeroes away from the origin and most zeroes very close to the origin, while $f$ has all of its zeroes near the unit circle.  By performing a suitable Taylor expansion, one finds that the large zeroes $\lambda_j$ of $f'$ stay close to the arc $\overline{D(0,1)} \cap \partial D(1,1)$, but also their second moment $\sum_j \lambda_j^2$ must basically vanish.  These two claims are incompatible (except when all the $\lambda_j$ vanish), thanks to the key geometric observation that non-zero points on the arc $\overline{D(0,1)} \cap \partial D(1,1)$ have argument in $[\pi/3,\pi/2]$ or $[-\pi/2,-\pi/3]$ (see Figure \ref{fig:key}), so that their square will have argument in $[2\pi/3,4\pi/3]$ and hence have negative real part.  It turns out that this analysis can be extended to more general polynomials.  The starting point is to use Theorem \ref{limit-thm}(ii) to get good control on the Stieltjes transform $s_\deriv$, which by Lemma \ref{basic}(vi) then gives good asymptotics for $f'$.  Applying the fundamental theorem of calculus, one then gains enough control on $f$ to locate the zeroes $\zero$ of this function rather precisely, at which point one can repeat the analysis used to rule out the counterexamples studied in Section \ref{nearer}.  The rigorous details of this argument are presented in Section \ref{circle-sec}. 

\begin{figure} [t]
\centering
\includegraphics[width=4in]{./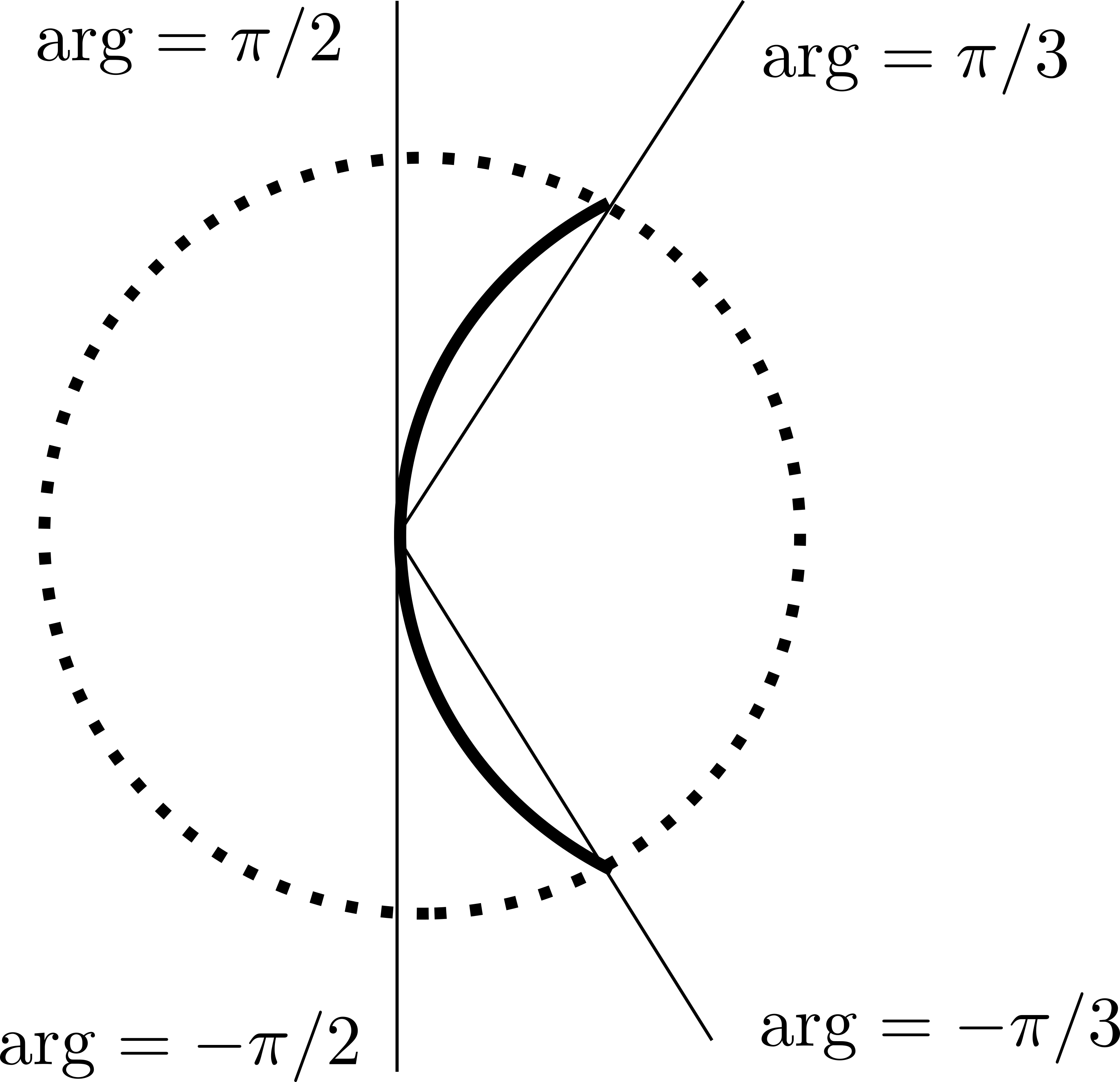}
\caption{Non-zero points on the arc $\overline{D(0,1)} \cap \partial D(1,1)$ have argument in $[\pi/3,\pi/2]$ or $[-\pi/2,-\pi/3]$. In particular, points on this arc subtend an angle of less than $\pi/4$ with the imaginary axis at the origin, so that their square lies in the left half-plane.}
\label{fig:key}
\end{figure}

\subsection{Acknowledgments}

The author is supported by NSF grant DMS-1764034 and by a Simons Investigator Award. We thank Alexandre Eremenko for a correction, and the anonymous referee for many helpful suggestions and corrections.

\subsection{Asymptotic notation}\label{notation-sec}

The following (somewhat nonstandard\footnote{Indeed, as remarked previously, the notations here can be viewed as a ``cheap'' version of nonstandard analysis; for instance, the objects that we call ``fixed'' here would be referred to as ``standard'' in the nonstandard analysis formalism, quantities that are $o(1)$ here would be referred to as ``infinitesimal'', and so forth.  See \cite{tao-cheap} for further discussion.}) asymptotic notations will be in force throughout this paper:

\begin{itemize}
\item Throughout this paper we assume that $n$ is a parameter ranging in some sequence of natural numbers tending to infinity, which we will call the ``ambient sequence''.  We do not refer to the ambient sequence by a specific mathematical symbol, because we reserve the right to refine the ambient sequence to a subsequence at various stages of the argument (for instance, in order to exploit compactness to improve convergence of some quantity in the limit $n \to \infty$).  Some quantities $X$ in this paper will be independent of $n$ and will be declared to be \emph{fixed}, but quantities $X$ that are not explicitly declared to be fixed are permitted to depend on $n$, thus $X = X^{(n)}$ in this case.  We usually omit the superscript $(n)$ unless we wish to emphasise the dependency on $n$. 
\item We use $X = o(Y)$ to denote the estimate $|X| \leq c(n) Y$ for some quantity $c(n)$ that can depend on $n$ and on fixed quantities, such that $c(n) \to 0$ as $n$ goes to infinity along the ambient sequence (keeping all fixed quantities constant).  Of course, such a relation $X=o(Y)$ will continue to hold if we pass to a subsequence.  If a mathematical statement does not explicitly involve the limit $n \to \infty$, it is understood to hold for all $n$ in the ambient sequence (and hence on all subsequences), unless otherwise specified. 
\item We use $X = O(Y)$, $Y \gtrsim X$, or $X \lesssim Y$ to denote the estimate $|X| \leq CY$ for some quantity $C$ that is fixed.  In particular, $C$ can depend on other fixed quantities, but must remain uniform with respect to quantities that vary with $n$.
\end{itemize}

The following examples will help illustrate these asymptotic conventions:

\begin{example}  Let $f: \C \to \C$ be a function, and let $K$ be a fixed subset of $\C$.  We did not declare $f$ to be fixed, hence $f = f^{(n)}$ is permitted to depend on $n$, which ranges in some (unspecified) ambient sequence.  The assertion that
$$ f(z) = o(1)$$
for all $z \in K$ is equivalent to the sequence $f = f^{(n)}$ converging to zero uniformly in $K$ as $n$ goes to infinity along the ambient sequence.  Indeed, if uniform convergence failed, then one could select\footnote{We freely use the axiom of choice in this paper, though with a little effort it should be possible to structure the argument so that the axiom of choice is not required.} an element $z = z^{(n)}$ of $K$ for each $n$ in the ambient sequence such that $f(z)$ did not go to zero as $n$ went to infinity along the sequence, so that the assertion $f(z) = o(1)$ would fail for this (non-fixed) choice of $z$; conversely if uniform convergence held then one certainly has $f(z)=o(1)$ for all $z \in K$.  On the other hand, if one only asserts that $f(z)=o(1)$ for all \emph{fixed} $z \in K$, this is equivalent to the weaker assertion that $f = f^{(n)}$ converges \emph{pointwise} to zero in $K$, as $n$ goes to infinity along the ambient sequence.  In a similar spirit:
\begin{itemize}
\item $f^{(n)}$ is uniformly bounded on $K$ if and only if\footnote{In particular, given a bound of the form $f(z)=O(1)$ for all $z \in K$, the implied constant in the $O(1)$ notation is automatically uniform in $z$, since otherwise one could use countable choice to locate $z = z^{(n)} \in K$ for which $f(z) = f^{(n)}(z^{(n)})$ was unbounded in $n$.  This is an example of the \emph{countable saturation} (or more precisely $\aleph_0$-saturation) property in model theory.} $f(z) = O(1)$ for all $z \in K$.  
\item $f^{(n)}$ is pointwise bounded on $K$ if and only if $f(z) = O(1)$ for all \emph{fixed} $z \in K$.
\item $f^{(n)}$ converges locally uniformly to zero if and only if, for each fixed compact $K \subset \C$, one has $f(z) = o(1)$ for all $z \in K$.  (Note that this is weaker than asserting that $f(z)=o(1)$ for all $z \in \C$, because $K$ has to be fixed, and the decay rate $c(n)$ implicit in the $o(1)$ notation can depend on this fixed $K$.)
\end{itemize}
\end{example}

\begin{example}  If $a = a^{(n)}$ lies in $[0,1]$, then after passing to a subsequence if necessary, exactly one of the assertions
$$ a = o(1/n)$$
and
$$ a \gtrsim 1/n$$
hold.  Indeed, it is clear that these two assertions cannot simultaneously be true.  If for every fixed $\eps>0$ there exists an $n$ for which $a \leq \eps/n$, then by sending $\eps$ to zero one can extract a subsequence for which $a=o(1/n)$; however if there does not exist such an $n$ for at least one fixed $\eps > 0$, then we instead have $a \gtrsim 1/n$.
\end{example}

\begin{example}[Underspill principle]\label{underspill} If $X = X^{(n)}$ is a complex number, then one has $X \lesssim n^\eps$ for every fixed $\eps>0$ if and only if $X \lesssim n^{o(1)}$.  Indeed, if $X \lesssim n^\eps$ for every fixed $\eps>0$, then one can find an increasing sequence $n_1 < n_2 < \dots$ of natural numbers such that $X \leq n^{1/k}$ whenever $n \geq n_k$.  If we then define $\eps_* = \eps_*^{(n)}$ to be $1/k$ if $k$ is the largest natural number with $n \geq n_k$, or $\delta=1$ if no such $k$ exists, then $\eps_*=o(1)$ and $X \lesssim n^{\eps_*}$, hence $X \lesssim n^{o(1)}$.  The converse implication is also easily established.  We refer to this type of argument in the sequel as ``letting $\eps$ tend to zero sufficiently slowly'' or an application of the ``underspill principle''.
\end{example}

\begin{example}\label{unif}  Let $\deriv$ be a random variable taking values in $\overline{D(0,1)}$.  The assertion that $\deriv$ converges to zero in probability (or in distribution) is equivalent to the assertion that $\P( \deriv \in K ) = o(1)$ for every fixed compact subset $K$ of $\overline{D(0,1)} \backslash \{0\}$.  By the underspill principle, it is also equivalent to the existence of an $\eps = o(1)$ such that $\P( |\deriv| \leq \eps ) = 1-o(1)$.  In contrast, the stronger assertion that $\deriv$ converges to zero \emph{uniformly} is equivalent to the existence of an $\eps = o(1)$ such that $|\deriv| \leq \eps$ holds surely (not just with probability $1-o(1)$).
\end{example}

\section{Control of limiting random variables}\label{limit-sec}

We now prove Theorem \ref{limit-thm}.  We begin with some general potential-theoretic considerations that apply to both parts of the theorem.  Given an almost surely bounded complex random variable $\eta$, we see from Young's inequality that the logarithmic potential $U_\eta$ (defined in \eqref{pot-def}) is defined almost everywhere as a locally integrable function, and is also harmonic outside of the essential range of $\eta$.  If $\eta^{(n)}$ is a sequence of complex random variables taking values in a fixed compact set that converge in distribution to another complex random variable $\eta^{(\infty)}$, then as is well known $U_{\eta^{(n)}}$ converges locally in $L^1(\C)$ to $U_{\eta^{(\infty)}}$ (this can be seen for instance by approximating the Newton potential $\log \frac{1}{|z|}$ by a continuous function plus an error small in $L^1(\C)$).   Here of course we equip the complex plane $\C$ with Lebesgue measure. In particular we have
$$ U_\zero(z) \to U_{\zero^{(\infty)}}(z)$$
and
$$ U_\deriv(z) \to U_{\deriv^{(\infty)}}(z)$$
locally in $L^1(\C)$ as $n \to \infty$. 

For $z$ in the exterior region $\overline{D(0,1)}^c$, we have $z-\zero \in D(z,1)$, which is a disk with a diameter that is a rotation of $[|z|-1, |z|+1]$ around the origin.  Applying M\"obius inversion, we conclude that $\frac{1}{z-\zero}$ takes values in a disk with a diameter that is a rotation of $[\frac{1}{|z|+1}, \frac{1}{|z|-1}]$ around the origin. By convexity, the same holds for the Stieltjes transform $s_\zero(z) =  \E \frac{1}{z-\zero}$.  Applying Lemma \ref{basic}(v), we conclude the inequalities
\begin{equation}\label{umu}
 -\frac{\log(|z|+1)}{n} \leq U_{\zero}(z) - \frac{n-1}{n} U_{\deriv}(z) \leq -\frac{\log(|z|-1)}{n}
\end{equation}
for almost all $z \in \overline{D(0,1)}^c$; in fact this holds for \emph{all} $z \in \overline{D(0,1)}^c$ since both sides are continuous in this region.  Taking limits as $n \to \infty$, we conclude that
\begin{equation}\label{mun}
 U_{\zero^{(\infty)}}(z) = U_{\deriv^{(\infty)}}(z)
\end{equation}
for almost all $z \in \overline{D(0,1)}^c$; again, by continuity this extends to all $z \in \overline{D(0,1)}^c$.

At this point we could proceed either via potential theory or via Fourier analysis; we shall adopt the latter approach.  If we write $z$ in polar form as $z = R e^{i\theta}$ for some $R>1$ and $\theta \in \R$, we see from the Taylor expansion 
\begin{equation}\label{Taylor}
\log \frac{1}{|Re^{i\theta} - w|} = - \log R + \frac{1}{2} \sum_{m \in \Z: m \neq 0} \frac{e^{im\theta} w^{|m|}}{|m| R^{|m|}}
\end{equation}
(valid for all $w \in D(0,R)$) and the Fubini--Tonelli theorem that
\begin{equation}\label{fourier}
U_{\zero^{(\infty)}}(Re^{i\theta}) = - \log R + \frac{1}{2} \sum_{m \in \Z: m \neq 0} \frac{e^{im\theta}}{|m| R^{|m|}} \E (\zero^{(\infty)})^{|m|}.
\end{equation}
Similarly for $\deriv^{(\infty)}$.  From \eqref{mun} and Fourier uniqueness we conclude that the random variables $\zero^{(\infty)}, \deriv^{(\infty)}$ have matching moments, thus
\begin{equation}\label{moment-match}
\E (\zero^{(\infty)})^m = \E (\deriv^{(\infty)})^m
\end{equation}
for all natural numbers $m$.

It is convenient to encode this moment matching relation in terms of Poisson kernels and balayage, in order to exploit the non-negative nature of the Poisson kernel.  If $R>0$ and $w = re^{i\alpha} \in D(0,R)$, let
 $P^R_w: \partial D(0,R) \to \R$ denote the \emph{Poisson kernel}
\begin{equation}\label{poisson}
\begin{split}
 P^R_w(Re^{i\theta}) &\coloneqq \Re \frac{1 + \frac{r}{R} e^{i(\theta-\alpha)}}{1 - \frac{r}{R} e^{i(\theta-\alpha)}} \\
&= \frac{1 - (r/R)^2}{1 - 2(r/R) \cos (\theta-\alpha) + (r/R)^2} \\
&= \sum_{m=-\infty}^\infty (r/R)^{|m|} e^{im(\theta-\alpha)}.
\end{split}
\end{equation}
One can interpret $P^R_w(Re^{i\theta})$ as the normalized probability density of the harmonic measure of $w$ on the circle $\partial D(0,R)$, or equivalently of the first location at which a Brownian motion originating at $w$ exits the disk $\overline{D(0,R)}$.
If $\eta$ is random variable taking values in $D(0,R)$, define the \emph{balayage} $\mathrm{Bal}_R(\eta) \colon \partial D(0,R) \to \R$ by the formula
$$ \mathrm{Bal}_R(\eta) (Re^{i\theta}) \coloneqq \E P^R_\eta( Re^{i\theta} ).$$
One can interpret $\mathrm{Bal}_R(\eta)$ as the normalized probability density of the first location where a Brownian motion originating at $\eta$ exits the disk $\overline{D(0,R)}$.  We refer the reader to \cite{gustafsson} for a further discussion of the classical and modern theory of balayage.  Note that
\begin{equation}\label{mas}
\mathrm{Bal}_R(\eta) (Re^{i\theta}) = 1 + 2 \Re \sum_{m=1}^\infty R^{-m} e^{im\theta} \E \eta^m.
\end{equation}
In particular, from \eqref{moment-match} one has
\begin{equation}\label{bal-bal}
\mathrm{Bal}_R(\zero^{(\infty)}) = \mathrm{Bal}_R(\deriv^{(\infty)}) 
\end{equation}
for any $R > 1$.  Informally, \eqref{bal-bal} asserts that two Brownian motions originating from $\zero^{(\infty)},\deriv^{(\infty)}$ respectively ``look the same'' once they exit the unit disk $\overline{D(0,1)}$ for the first time.  We also remark that $\mathrm{Bal}_R(\eta)$ is related to the Stieltjes transform $s_\eta$ via the identity
$$ \mathrm{Bal}_R(\eta)(Re^{i\theta}) = 2 \mathrm{Re} Re^{i\theta} s_\eta(Re^{i\theta}) - 1,$$
as is easily verified using \eqref{mas}.

There is also a non-asymptotic version of these identities.  If $1 < R \leq 3/2$, then we have $\log \frac{1}{|z-\deriv|} \lesssim \log \frac{1}{R-1}$ for any $z \in \partial D(0,R)$, hence $U_{\deriv}(z) \lesssim \log \frac{1}{R-1}$. From \eqref{umu} we conclude that
$$ U_{\zero}(z) = U_{\deriv}(z) + O\left( \frac{1}{n} \log \frac{1}{R-1} \right)$$
for $z \in \partial D(0,R)$. Extracting Fourier coefficients using \eqref{fourier} we see that
$$
\E \zero^m = \E \deriv^m + O\left( \frac{m R^m}{n} \log \frac{1}{R-1}\right)$$
for any natural number $m$ and $1 < R \leq 3/2$.  We can optimise in $R$ by choosing $R \coloneqq 1 + \frac{1}{m}$ to obtain
$$
\E \zero^m = \E \deriv^m + O\left( \frac{m \log m}{n} \right)$$
(the case $m=1$ following from Lemma \ref{basic}(ii)).  Combining this with \eqref{mas} and the bound
$$ \sum_{m=1}^\infty R^{-m} m \log m \lesssim \frac{\log \frac{1}{R-1}}{(R-1)^2} $$
for any $1 < R \leq 3/2$ (which is easily established by dyadic decomposition, with the terms $m = O(\frac{1}{R-1})$ providing the bulk of the sum), we thus have the pointwise bound
\begin{equation}\label{balk} \mathrm{Bal}_R(\zero) = \mathrm{Bal}_R(\deriv) + O\left(  \frac{\log \frac{1}{R-1}}{n(R-1)^2}  \right)
\end{equation}
for any $1 < R \leq 3/2$.

\subsection{Proof of (i)}  Now let us specialize to the case $a^{(\infty)}=0$, so that $a=o(1)$.  From Lemma \ref{basic}, $\deriv$ takes values in the thin lune $\overline{D(0,1)} \backslash \overline{D(a,1)}$, which lies in the $O(a)$-neighborhood of the semicircle $C$.  Taking limits, we conclude that $\deriv^{(\infty)}$ almost surely takes values in $C$.

Now we claim that $\zero^{(\infty)}$ is also supported on $C$.  By inner regularity it suffices to show that $\P( \zero^{(\infty)} \in K) = 0$ for all fixed compact subsets $K$ of $\overline{D(0,1)} \backslash C$.  The set $\{ \theta \in (-\pi/2,\pi/2): e^{i\theta} \in K \}$ is compact and thus can be contained in the interior of a fixed compact interval $I$ in $(-\pi/2,\pi/2)$ that depends only on $K$.  One can calculate\footnote{Indeed, the only difficult case is when $w$ is very close to the unit circle, but then one can show that $\int_{(-\pi/2,\pi/2] \backslash I} P^R_w(Re^{i\theta}) \frac{d\theta}{2\pi} \leq \frac{1}{2}$ (say), and the claim then follows since the Poisson kernel $P^R_w(Re^{i\theta})$ has mean one.} the lower bound
$$ \int_I P^R_w(Re^{i\theta}) \frac{d\theta}{2\pi} \gtrsim 1$$
uniformly for all $w \in K$ and $1 < R \leq 3/2$; probabilistically, this reflects the intuitive fact that a Brownian motion originating from $K$ will hit the circle $\partial D(0,R)$ first in the arc $A \coloneqq \{ Re^{i\theta}: \theta \in I\}$ with probability uniformly bounded from below; see Figure \ref{fig:harmonic}.  Note that our asymptotic notation allows the implied constant in the $\gtrsim$ notation to depend on fixed objects such as $K$ or $I$.

\begin{figure} [t]
\centering
\includegraphics[width=4in]{./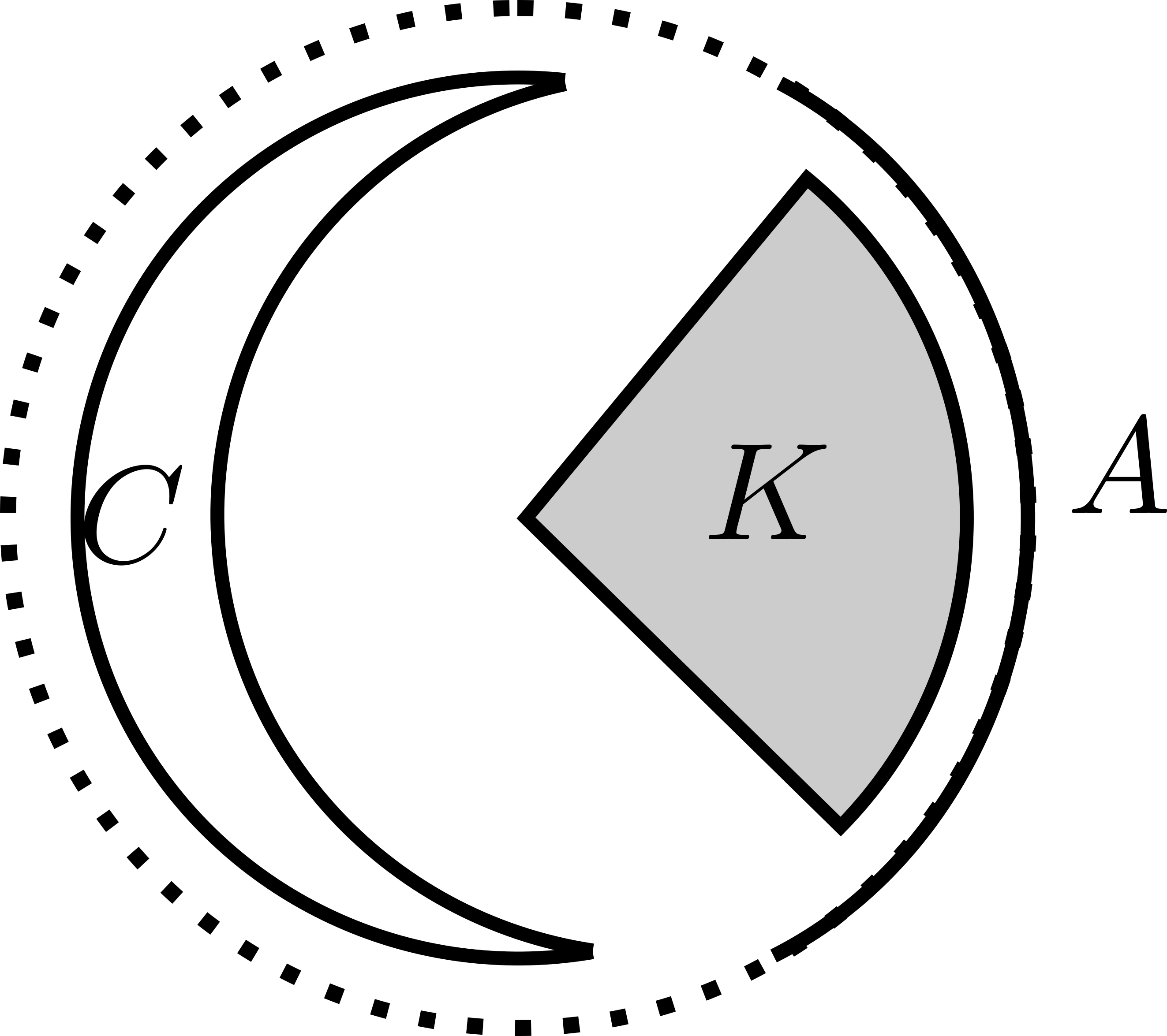}
\caption{A Brownian motion originating from the compact set $K$ is fairly likely to exit the disk $\overline{D(0,R)}$ (enclosed here by the dashed circle) in the arc $A$; but a Brownian motion originating from the semicircle $C$ (or from the thin lune between the two circular arcs on the left) is far more likely to exit in the complement of $A$ instead.  As a consequence, a pair $\zero, \deriv$ of random variables in $\overline{D(0,1)}$ cannot have the same (resp. almost the same) balayage if $\deriv$ is almost surely in (resp. near) $C$ and $\zero$ lies in $K$ with positive (resp. large) probability.}
\label{fig:harmonic}
\end{figure}

Setting $w = \zero^{(\infty)}$ and taking expectations, we conclude that
\begin{equation}\label{ibal}
 \int_I \mathrm{Bal}_R(\zero^{(\infty)}) \frac{d\theta}{2\pi} \gtrsim \P( \zero^{(\infty)} \in K).
\end{equation}
On the other hand, for any $w \in C$, one has
$$ \lim_{R \to 1^+} \int_I P^R_w(Re^{i\theta}) \frac{d\theta}{2\pi} = 0$$
uniformly in $w$; probabilistically, this reflects the intuitive fact that a Brownian motion originating from $C$ is very unlikely to hit the circle $\partial D(0,R)$ in the arc $A$.  Setting $w = \deriv^{(\infty)}$ and taking expectations, we conclude that
$$ \lim_{R \to 1^+} \int_I \mathrm{Bal}_R(\deriv^{(\infty)}) \frac{d\theta}{2\pi} = 0.$$
Applying \eqref{bal-bal}, \eqref{ibal} we conclude that $\P( \zero^{(\infty)} \in K)=0$ as desired.

As $\deriv^{(\infty)}, \zero^{(\infty)}$ almost surely take values in  $C \subset \partial D(0,1)$, the moments in \eqref{moment-match} can now be viewed as Fourier coefficients of the laws of $\deriv^{(\infty)}, \zero^{(\infty)}$. By Fourier uniqueness, we conclude that $\deriv^{(\infty)}, \zero^{(\infty)}$ have the same distribution as claimed.

Now we prove the more quantitative estimate \eqref{pzeta}, which uses a non-asymptotic version of the above arguments.  We may assume that $a$ is sufficiently small depending on $K$ since the claim is trivial otherwise.  Let $I$ be as before, and let $1 < R \leq 3/2$ be chosen later.  By repeating the proof of \eqref{ibal}, we see that
$$ \int_I \mathrm{Bal}_R(\zero) \frac{d\theta}{2\pi} \gtrsim \P( \zero \in K).$$
On the other hand, for $w$ in the lune $\overline{D(0,1)} \backslash \overline{D(a,1)}$, which lies in a $O(a)$-neighborhood of $C$, one can verify that\footnote{Indeed one has the uniform pointwise bound $P^R_w(Re^{i\theta}) \lesssim a + R-1$ for $\theta \in I$.}
$$ \int_I P^R_w(Re^{i\theta}) \frac{d\theta}{2\pi} \lesssim a + R-1$$
uniformly for $w$ in this lune, and hence
$$ \int_I \mathrm{Bal}_R(\deriv) \frac{d\theta}{2\pi} \lesssim a + R-1.$$
Applying \eqref{balk}, one concludes that
$$ \P( \zero \in K) \lesssim a + R-1 + \frac{\log \frac{1}{R-1}}{n(R-1)^2} .$$
Setting $R \coloneqq 1 + n^{-1/3}$, we obtain the claim.  This concludes the proof of Theorem \ref{limit-thm}(i).

\subsection{Proof of (ii)}  Now we suppose $a^{(\infty)}=1$, so that $a = 1 - o(1)$.  By passing to subsequences as necessary we may assume that $n$ is sufficiently large whenever this is required.

We will combine the above theory with the analysis of D\'egot \cite{degot}.  A key quantity will be the mean
$$ \mu \coloneqq \E \zero = \E \deriv$$
from Lemma \ref{basic}(ii).

First, from applying \cite[Theorem 5]{degot} (which relies ultimately on the powerful theorem of Grace \cite{grace}) we obtain the lower bound
\begin{equation}\label{labb}
 |f(\delta)| \geq \frac{1-\sqrt{1+\delta^2-\delta a}}{n} |f'(a)|
\end{equation}
for any $0 < \delta < a$, so in particular
\begin{equation}\label{fdel}
 |f(\delta)| \gtrsim \frac{1}{n} |f'(a)|
\end{equation}
whenever $1 \lesssim \delta \leq 1/2$.  Meanwhile, from \eqref{lfp} and the fact that $|a-\deriv| > 1$ we have
\begin{equation}\label{fan}
 |f'(a)| > n.
\end{equation}
On the other hand, by \cite[Lemma 1]{degot}, we have the elementary inequality
$$ |f(\delta)| \leq \left( \sqrt{1 + \delta^2 - 2 \delta \Re \mu} \right)^n.$$
Combining these three estimates, we conclude that for all $1 \lesssim \delta \leq 1/2$ we have
$$ \sqrt{1 + \delta^2 - 2 \delta \Re \mu} \geq 1 - O\left(\frac{1}{n}\right)$$
which implies in particular that
$$ \Re \mu \leq \frac{\delta}{2} + o(1).$$
Sending $\delta$ to zero arbitrarily slowly with $n$ (cf. Example \ref{underspill}), we conclude that
$$ \Re \mu \leq o(1)$$
(compare with \cite[Corollary 1, Remark 2]{degot}). On the other hand, since $\Re \deriv$ is bounded below by $-1$, we have
$$ \Re \mu \geq \eps - \P(\Re \deriv \leq \eps)$$
and thus
\begin{equation}\label{xio}
 \P( \Re \deriv \leq \eps ) \gtrsim 1 
\end{equation}
for $n$ large enough and any fixed $\eps>0$.

Next, from \eqref{lfp} one has for any $0 \leq \delta \leq 1/2$ that
$$ \log |f'(a)| - \log |f'(\delta)| = (n-1) \E \log \frac{|a-\deriv|}{|\delta-\deriv|}.$$
Since $\deriv$ takes values in the lune $\overline{D(0,1)} \backslash \overline{D(a,1)}$, which lies to the left of the perpendicular bisector of $\delta$ and $a$, we see that $\log \frac{|a-\deriv|}{|\delta-\deriv|} \geq 0$, with equality only when $\deriv$ has real part $\frac{a+\delta}{2} = \frac{1+\delta}{2} - o(1)$.  As a consequence, if $\Re \deriv \leq 0.1$ (say), we see that $\log \frac{|a-\deriv|}{|\delta-\deriv|} \gtrsim 1$. From \eqref{xio}, we conclude that 
$$ \log |f'(a)| - \log |f'(\delta)| \gtrsim n$$
and thus
$$ f'(\delta) \lesssim e^{-cn} |f'(a)|$$
for all $0 \leq \delta \leq 1/2$ and some absolute constant $c>0$.  A similar argument also gives
$$ f'(z) \lesssim e^{-cn} |f'(a)|$$
for $z \in D(0.5,0.01)$ (say). By the fundamental theorem of calculus, this implies that
$$ f(0) = f(\delta) + O\left( e^{-cn} |f'(a)| \right)$$
for all $0 \leq \delta \leq 1/2$, hence by \eqref{fdel} (for say $\delta=1/2$) we have
$$ |f(0)| \gtrsim \frac{1}{n} |f'(a)|.$$
From Lemma \ref{basic}(iii) we have $|f(0)| \leq 1$; comparing this with \eqref{fdel}, \eqref{fan} we see that
\begin{equation}\label{ffd}
 |f(0)| \sim |f(\delta)| \sim 1; \quad |f'(a)| \sim n
\end{equation}
for all $0 \leq \delta \leq 1/2$ (cf., \cite[Theorem 6]{degot}).  In particular, $f(0)$ is non-zero.  A similar fundamental theorem of calculus argument also shows
\begin{equation}\label{similar}
 |f(z)| \sim 1
\end{equation}
uniformly for $z \in D(0.5, 0.01)$. In particular, $f$ has no zeroes in this disk (cf. \cite[Theorem 4]{degot}).

From \eqref{ffd} and Lemma \ref{basic}(iii) we have
$$ \E \log \frac{1}{|\zero|} = \frac{1}{n} \log \frac{1}{|f(0)|} \lesssim \frac{1}{n}$$
(giving the first estimate of \eqref{zetas}),  hence on taking limits (using Fatou's lemma)
$$ \E \log \frac{1}{|\zero^{(\infty)}|} = 0.$$
Thus $\zero^{(\infty)}$ almost surely lies in the unit circle $\partial D(0,1)$.  In particular, the logarithmic potential $U_{\zero^{(\infty)}}$ is harmonic in $D(0,1)$.  On the other hand, from \eqref{similar} and Lemma \ref{basic}(iii) we have
$$ U_\zero(z) = O\left( \frac{1}{n} \right)$$
uniformly for $z \in D(0.5, 0.01)$, thus on taking limits we see that $U_{\zero^{(\infty)}}$ vanishes on $D(0.5, 0.01)$.  By unique continuation for harmonic functions, we thus see that $U_{\zero^{(\infty)}}$ vanishes on all of $D(0,1)$.  Using the expansion \eqref{fourier}
we conclude that all the non-zero Fourier coefficients of $\zero^{(\infty)}$ vanish, thus by Fourier uniqueness $\zero^{(\infty)}$ is uniformly distributed on the unit circle $\partial D(0,1)$.  A standard computation (using Jensen's formula, \eqref{Taylor}, or the two-dimensional version of the Newton shell theorem) then gives
\begin{equation}\label{umulog}
 U_{\zero^{(\infty)}}(z) = \min\left(\log \frac{1}{|z|}, 0\right)
\end{equation}
for $z \in \C$.

Returning to \eqref{ffd}, we use Lemma \ref{basic}(iii) again to conclude that
$$ \E \log |\deriv-a| = \frac{1}{n} \log \frac{|f'(a)|}{n} \lesssim \frac{1}{n}$$
(giving the second estimate of \eqref{zetas}) and hence on taking limits
$$ \E \log |\deriv^{(\infty)}-1| = 0.$$
As $\deriv^{(\infty)}$ was already almost surely in $\overline{D(0,1)} \backslash \overline{D(1,1)}$, it is thus almost surely in the arc $\overline{D(0,1)} \cap \partial D(1,1)$ (cf., \cite[Remark 3]{degot}).  In particular, the logarithmic potential $U_{\deriv^{(\infty)}}$ is harmonic outside of this arc. By \eqref{mun}, \eqref{umulog}, $U_{\deriv^{(\infty)}}$ agrees with $\log \frac{1}{|z|}$ in $\overline{D(0,1)}^c$; by unique continuation of harmonic functions, it thus agrees with $\log \frac{1}{|z|}$ almost everywhere in $\C$ (here we use the connectedness of the complement of the arc $\overline{D(0,1)} \cap \partial D(1,1)$ in the complex plane $\C$).  Taking distributional Laplacians using \eqref{mu-u}, we conclude that the law of $\deriv^{(\infty)}$ is the Dirac probability measure at $0$.  This proves Theorem \ref{limit-thm}(ii).

\begin{remark}\label{degot-time}  A similar argument applies when $0 < a^{(\infty)} < 1$ to conclude that $\deriv^{(\infty)}$ is almost surely $0$ and simultaneously $|\deriv^{(\infty)}-a^{(\infty)}|$ is almost surely $1$, which is absurd.  This gives a slightly different way to recover the main results of D\'egot \cite{degot} (although many of the key ingredients in both arguments are the same).
\end{remark}

\section{Getting a contradiction near the origin}\label{origin-sec}

In this section we treat the case where $a$ is somewhat close to the origin:

\begin{theorem}[Contradiction near the origin]\label{contra-origin}  Theorem \ref{main-contr} holds when $a = o(\frac{1}{\log n})$.  
\end{theorem}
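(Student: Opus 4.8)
The plan is to argue by contradiction via the argument principle applied to the log-derivative $f'/f$ on a suitably chosen circle $\partial D(0,r)$ about the origin, comparing its winding number there to that of the limiting Stieltjes transform. So suppose $f,a$ obey the hypotheses of Theorem~\ref{main-contr} with $a=o(1/\log n)$; since the case $a=0$ is covered by the Gauss--Lucas theorem we may assume $a>0$. As $a^{(\infty)}=0$, Theorem~\ref{limit-thm}(i) applies, and in particular \eqref{pzeta} gives $\P(\zero\in K)\lesssim a+\frac{\log n}{n^{1/3}}=o(1/\log n)$ for every fixed compact $K\subset\overline{D(0,1)}\setminus C$, so $f$ has at most $o(n/\log n)$ zeroes in any fixed compact subset of $D(0,1)$. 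I would also record two preliminary facts: every zero of $f$ lying in $\overline{D(a,1)}$ is simple (a zero of multiplicity $k\ge2$ would be a zero of $f'$), and, since $\zero^{(\infty)}$ is supported on $C\subset\partial D(0,1)$, the limiting transform $s_{\zero^{(\infty)}}=s_{\deriv^{(\infty)}}$ is holomorphic and not identically zero on the connected set $\C\setminus C\supseteq D(0,1)$, whence by the identity theorem its zeroes in $D(0,1)$ are isolated.

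Next I would pin down the control available on $f''/f'$. Because $f'$ is zero-free on $\overline{D(a,1)}\supseteq\overline{D(0,1-a)}$, Lemma~\ref{basic}(iv) shows $s_\deriv=\tfrac1{n-1}\tfrac{f''}{f'}$ is holomorphic on $D(0,1-a)$. Moreover $\deriv$ surely takes values in the thin lune $\overline{D(0,1)}\setminus\overline{D(a,1)}$, which lies in the $O(a)$-neighbourhood of $C$, so there is no escape of mass towards the origin and the convergence in distribution $\deriv\to\deriv^{(\infty)}$ upgrades to $s_\deriv\to s_{\deriv^{(\infty)}}=s_{\zero^{(\infty)}}$ \emph{uniformly} on compact subsets of $D(0,1)$ (the kernel $w\mapsto(z-w)^{-1}$ is bounded and continuous, uniformly for $z$ in such a set and $w$ near $C$). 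By contrast, no such uniform control on $s_\zero=\tfrac1n\tfrac{f'}{f}$ is available a priori, because $\zero$ is only \emph{probabilistically} concentrated near $C$ and $f$ may carry zeroes near the origin.

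Using $\P(\zero\in K)=o(1/\log n)$, a pigeonholing over radii produces $r=r^{(n)}\in[1/4,1/2]$ with $\partial D(0,r)$ at distance $\ge\delta_n$ from every zero of $f$, where $n\delta_n/\log n\to\infty$ (so $\log(1/\delta_n)=O(\log n)$), and with $|s_{\zero^{(\infty)}}|$ bounded below by a fixed positive constant on $\partial D(0,r)$. The crux is then to show that $s_\zero(z)=s_{\zero^{(\infty)}}(z)+o(1)$ uniformly for $z\in\partial D(0,r)$ --- that is, genuine uniform control of $f'/f=ns_\zero$ on this circle, not merely the logarithmic-scale statement $|f'/f|^{1/n}\to1$ that the potential convergence $U_\zero\to U_{\zero^{(\infty)}}$ readily yields. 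The route I would follow is the rearrangement $s_\zero=\tfrac{n-1}{n}s_\deriv-\tfrac1n\tfrac{s_\zero'}{s_\zero}$ of \eqref{sz-mod} (equivalently $\tfrac1n\tfrac{(f'/f)'}{f'/f}=\tfrac1n\tfrac{f''}{f'}-\tfrac1n\tfrac{f'}{f}$): the term $\tfrac{n-1}{n}s_\deriv$ equals $s_{\zero^{(\infty)}}+o(1)$ by the previous paragraph, so it remains to bound $\tfrac1n\tfrac{s_\zero'}{s_\zero}$ on $\partial D(0,r)$, which reduces to a lower bound $|s_\zero|\gtrsim1$ there. One obtains this by decomposing the logarithmic potential (and Stieltjes transform) of $\zero$ into the contribution of the $o(n/\log n)$ zeroes of $f$ near the annulus $\{1/5\le|z|\le3/5\}$ --- an object of $L^1$-norm $o(1)$, negligible pointwise on the well-chosen $\partial D(0,r)$ since the separation $\delta_n$ costs only a factor $\log(1/\delta_n)=O(\log n)$ against the $o(1/\log n)$ density --- and the complementary part, which is harmonic resp.\ holomorphic on that annulus and hence converges uniformly on $\partial D(0,r)$ to $U_{\zero^{(\infty)}}$ resp.\ $s_{\zero^{(\infty)}}$ (via \eqref{lfp}, \eqref{integd-f} one also gets the attendant control of $|f'|$ and $f'$). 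I expect this step to be the principal obstacle, and it is exactly this bookkeeping --- converting $o(1/\log n)$ density control near the origin into a usable pointwise bound on $f'/f$ on a circle that may be approached by as many as $o(n/\log n)$ zeroes of $f$ --- that forces the hypothesis $a=o(1/\log n)$ rather than merely $a=o(1)$.

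Granting the crux, the proof concludes by comparing winding numbers. Since $s_\zero=s_{\zero^{(\infty)}}+o(1)$ on $\partial D(0,r)$ and $|s_{\zero^{(\infty)}}|\gtrsim1$ there, for $n$ large the closed curve $\theta\mapsto s_\zero(re^{i\theta})$ is a uniformly small perturbation of the non-vanishing curve $\theta\mapsto s_{\zero^{(\infty)}}(re^{i\theta})$, hence has the same winding number about the origin --- namely the number of zeroes of the holomorphic function $s_{\zero^{(\infty)}}$ inside $D(0,r)$, which is $\ge0$. On the other hand, applying the argument principle to the meromorphic function $s_\zero=\tfrac1n\tfrac{f'}{f}$ on $D(0,r)$ --- which has no zeroes there, as $f'$ does not vanish, and exactly $m_r$ poles, all simple, at the (simple) zeroes of $f$ in $D(0,r)$ --- shows this winding number equals $0-m_r=-m_r$. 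But $a\in D(0,r)$ for $n$ large and $f(a)=0$, so $m_r\ge1$; this contradiction proves Theorem~\ref{contra-origin}. (In the phrasing of the introduction: the number of zeroes of $f'/f$ in $D(0,r)$ minus the number of poles is $\ge0$ by the limiting comparison, yet equals $-m_r\le-1$ because $f'$ is zero-free there and $f(a)=0$.)
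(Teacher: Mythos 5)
Your overall strategy is the same as the paper's: select a good circle $\partial D(0,r)$ on which $s_{\zero^{(\infty)}}$ is bounded away from zero, show $\tfrac1n\tfrac{f'}{f}=s_\zero=s_{\zero^{(\infty)}}+o(1)$ uniformly there by splitting $\zero$ into ``near'' and ``far'' parts and pigeonholing on the radius, and then read off a contradiction from the argument principle (winding number $\geq 0$ from the limit, $\leq -1$ from $f(a)=0$ and the zero-freeness of $f'$). The decomposition sketch in your third paragraph, together with the radius pigeonhole and the $\log(1/\delta_n) \cdot o(1/\log n) = o(1)$ bookkeeping, is exactly the paper's mechanism, and the final winding-number computation is also identical.

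However, the intermediate route you propose via the identity \eqref{sz-mod} does not work and should be dropped. You write $s_\zero = \tfrac{n-1}{n}s_\deriv - \tfrac1n\tfrac{s_\zero'}{s_\zero}$ and claim that bounding $\tfrac1n\tfrac{s_\zero'}{s_\zero}$ ``reduces to a lower bound $|s_\zero|\gtrsim 1$.'' This is circular: \eqref{sz-mod} rearranges to $\tfrac1n\tfrac{s_\zero'}{s_\zero} = \tfrac{n-1}{n}s_\deriv - s_\zero$, so bounding the right-hand side of your rearrangement is literally the same task as bounding $s_\zero - \tfrac{n-1}{n}s_\deriv$, and one gains nothing. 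Moreover, a lower bound on $|s_\zero|$ does not by itself control the numerator: $s'_\zero(z) = -\E(z-\zero)^{-2}$ can be as large as (number of near zeros)$/(n\delta_n^2)$, and with the pigeonhole separation $\delta_n$ that is feasible against $o(n/\log n)$ zeros in the annulus, $\tfrac1n|s'_\zero|$ need not be $o(1)$ on the chosen circle. (Relatedly, the requirement $n\delta_n/\log n \to \infty$ points in the wrong direction for the radius pigeonhole --- you want $\delta_n$ small enough that the radii forbidden by the near zeros have small measure, and a fixed negative power of $n$, as the paper takes $\delta_n = n^{-10}$, does the job.) The correct argument is the one you give afterward, bounding the near part of $s_\zero$ directly from the $o(1/\log n)$ density together with a Fubini--Markov pigeonhole on $r$, and the far part by convergence in distribution; this is self-contained and does not require any information about $s_\deriv$ or $s'_\zero$. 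The one other cosmetic difference is that the paper fixes an annulus $\overline{D(0,r_2)}\setminus D(0,r_1)$ on which $|s_{\zero^{(\infty)}}|\sim 1$ before pigeonholing, rather than pigeonholing on $[1/4,1/2]$ and imposing the lower bound on $s_{\zero^{(\infty)}}$ as an extra constraint; either works since the zeroes of $s_{\zero^{(\infty)}}$ in $D(0,1)$ are isolated and fixed.
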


We now prove this theorem.  The strategy is to control the number of zeroes of $f$ near the origin by applying the argument principle to the log derivative $\frac{f'}{f}$, with the aim to show that this number is either zero or negative, contradicting the hypothesis of having a zero $a$ near the origin.

By passing to a subsequence if necessary we may assume that $n$ is sufficiently large.

From Theorem \ref{limit-thm}(i) we know that in this case $\deriv^{(\infty)}, \zero^{(\infty)}$ are almost surely in the semicircle $C = \{ e^{i\theta}: \pi/2 \leq \theta \leq 3\pi/2\}$ with the same distribution, and that 
\begin{equation}\label{zeta-alone}
 \P( |\zero| \leq 1/2 ) = o\left(\frac{1}{\log n}\right)
\end{equation}
In other words, $\overline{D(0,1/2)}$ contains at most $o(n/\log n)$ of the zeroes of $f$ (counting multiplicity).

A key function in our analysis will be the Stieltjes transform $s_{\zero^{(\infty)}}$ of $\zero^{(\infty)}$.  This function is fixed and holomorphic on the connected domain $\C \backslash C$, and is not identically zero since it is asymptotic to $1/z$ as $|z| \to \infty$.  Hence the zeroes of this transform on $\C \backslash C$ are isolated.  In particular, one can find a fixed annulus $\overline{D(0,r_2)} \backslash D(0,r_1)$ for some fixed $0 < r_1 < r_2 < 1/2$ on which $s_{\zero^{(\infty)}}$ is bounded away from zero, thus
$$ |s_{\zero^{(\infty)}}(z)| \sim 1$$
uniformly for $r_1 \leq |z| \leq r_2$.

Let $m$ denote the number of zeroes of $s_{\zero^{(\infty)}}$ in the disk $D(0,r_1)$, then $m$ is a fixed non-negative integer, and by the argument principle we see that for any radius $r_1 < r < r_2$, the contour $\theta \mapsto s_{\zero^{(\infty)}}(re^{i\theta}), 0 \leq \theta \leq 2\pi$ stays at a distance $\sim 1$ from the origin and winds exactly $m$ times anti-clockwise around the origin.

From Lemma \ref{basic}(iv) we have
$$ s_\zero(z) = \frac{1}{n} \frac{f'(z)}{f(z)}.$$
Since $\zero$ converges in distribution to $\zero^{(\infty)}$ and ranges in the compact set $\overline{D(0,1)}$, we see from the local integrability of the Stieltjes kernel $\frac{1}{z}$ we see that $\frac{1}{n} \frac{f'(z)}{f(z)}$ converges locally in $L^1(\C)$ to 
$s_{\zero^{(\infty)}}(z)$. 

Suppose for the moment that this convergence was in fact locally uniform instead of merely being locally in $L^1(\C)$, so that
$$ \frac{1}{n} \frac{f'(z)}{f(z)} = s_{\zero^{(\infty)}}(z)+o(1)$$
uniformly for $z \in \overline{D(0,r_2)} \backslash D(0,r_1)$.  Then by Rouche's theorem and the argument principle, for $n$ sufficiently large and any $r_1 < r < r_2$, the contour 
\begin{equation}\label{con}
\theta \mapsto \frac{1}{n} \frac{f'}{f}(re^{i\theta}), 0 \leq \theta \leq 2\pi 
\end{equation}
would also wind $m$ times anti-clockwise around the origin, and the number of zeroes of $f'$ in $D(0,r)$ minus the number of zeroes in $f$ in $D(0,r)$ (counting multiplicity) would be equal to $m$.  But $f'$ has no zeroes in $\overline{D(a,1)} \supset D(0,1/2) \supset D(0,r)$, and $f$ contains at least one zero $a$ in $D(0,r)$, hence we have
\begin{equation}\label{m-absurd}
 m \leq -1
\end{equation}
which is absurd since $m$ is a non-negative integer.

Now we need to get around the technical issue that the convergence is a little bit worse than locally uniform. Here we will take advantage of the relatively small number of zeroes in or near the annulus $\{ r_1 \leq |z| \leq r_2\}$, as provided by \eqref{zeta-alone}.  We use Lemma \ref{basic}(iv) and split 
$$ \frac{1}{n} \frac{f'(z)}{f(z)} = \E \frac{1_{|\zero|>1/2}}{z-\zero} + \E \frac{1_{|\zero| \leq 1/2}}{z-\zero},$$
where we use $1_E$ to denote the indicator of an event $E$. Since $\zero$ converges in distribution to $\zero^{(\infty)}$, which is supported in $C$, we have from \eqref{convert} (approximating $1_{|\zero|>1/2}$ by continuous functions in the usual fashion) to conclude that
$$ \E \frac{1_{|\zero|>1/2}}{z-\zero} = \E \frac{1}{z-\zero^{(\infty)}} + o(1) = s_{\zero^{(\infty)}}(z)  + o(1)$$
uniformly for $z \in D(0,r_2)$.  If $|z|=r$, we have from the triangle inequality that
$$ \left|\E \frac{1_{|\zero| \leq 1/2}}{z-\zero}\right| \leq \E \frac{1_{|\zero| \leq 1/2}}{\left|r-|\zero|\right|}.$$
To get a good bound for the right-hand side we exploit the freedom to select $r$ via the probabilistic method (cf. the standard proof of the Hadamard factorisation theorem, see e.g., \cite[Corollary 5.4]{ss}).  Whenever $|\zero| \leq 1/2$, direct calculation gives
$$ \int_{r_1}^{r_2} \frac{dr}{\max\left( \left|r-|\zero|\right|, n^{-10} \right)} \lesssim \log n$$
hence by \eqref{zeta-alone} and the Fubini--Tonelli theorem we have
$$ \int_{r_1}^{r_2} \E \frac{1_{|\zero| \leq 1/2}}{\max\left( \left|r-|\zero|\right|, n^{-10} \right)}\ dr = o(1).$$
Hence by Markov's inequality we see that
$$ \E \frac{1_{|\zero| \leq 1/2}}{\max\left( \left|r-|\zero|\right|, n^{-10} \right)} = o(1)$$
for all $r \in [r_1,r_2]$ outside of a set of measure $o(1)$ (we do \emph{not} require this set to be fixed).  Also, since there are at most $n$ zeroes, we see that one surely has $\left|r-|\zero|\right| \geq n^{-10}$ for all $r \in [r_1,r_2]$ outside of a set of measure $O( n^{-10+1} ) = o(1)$.  We conclude that for $n$ large enough, we can find $r \in [r_1,r_2]$ (depending on $n$) such that
$$ \E \frac{1_{|\zero| \leq 1/2}}{\left|r- |\zero|\right|} = o(1).$$
Putting all this together, we conclude that for this particular choice of $r$, one has
$$ \frac{1}{n} \frac{f'}{f}(re^{i\theta}) = s_{\zero^{(\infty)}}(re^{i\theta}) + o(1)$$
uniformly in $\theta \in [0,2\pi]$.  By Rouche's theorem, we conclude (again for $n$ large enough) that for this choice of $r$, the contour \eqref{con} winds around the origin $m$ times, so we again obtain the absurd conclusion \eqref{m-absurd}.  This concludes the proof of Theorem \ref{contra-origin}.

\begin{remark}  It is instructive to compare this argument with the near-counterexample in Example \ref{ex-1}. Here $\zero^{(\infty)}, \deriv^{(\infty)}$ are uniformly distributed on unit circle $\partial D(0,1)$.  Unlike the situation with Theorem \ref{limit-thm}(i), the essential range of these random variables is now all of the circle $\partial D(0,1)$ rather than just the semicircle $C$; in particular, this support now disconnects the origin from infinity, and we can no longer prevent the Stieltjes transform $s_{\zero^{(\infty)}}$ from vanishing in the disk $D(0,1)$; indeed this transform does vanish by the residue theorem.  As a consequence, we no longer possess an annulus in which one can use the argument principle to prevent $f$ from acquiring a zero (or $f'/f$ acquiring a pole) near the origin. 
\end{remark}

\begin{remark}\label{low-cover} A variant of this argument can also cover the range $n^{-O(1)} \leq a \leq o(1)$ as follows.  From \eqref{labb}, \eqref{fan} one has
$$ 
|f(\delta)| > 1-\sqrt{1+\delta^2-\delta a} = \frac{\delta(a-\delta)}{1+\sqrt{1-\delta(a-\delta)}}$$
for any $0 < \delta < a$, so in particular
$$ \log \frac{1}{|f(\delta)|} \lesssim \log \frac{1}{\delta}$$
for $0 < \delta \leq a/2$.  Applying Lemma \ref{basic}(iii), we conclude that
$$ \E \log \frac{1}{|\zeta-\delta|} \lesssim \frac{1}{n} \log \frac{1}{\delta}.$$
On the other hand, $\log \frac{1}{|\zeta-\delta|}$ is bounded from below by $\log \frac{1}{1+\delta} = -O(\delta)$.  Applying Markov's inequality, we conclude for any fixed compact $K \subset D(0,1)$ that
$$ \P( \zeta \in K ) \lesssim \frac{1}{n} \log \frac{1}{\delta} + \delta$$
if $n$ is large enough.  Setting $\delta \coloneqq n^{-C}$ for a large fixed $C$, we conclude in particular that
$$ \P( \zeta \in K ) \lesssim \frac{\log n}{n}$$
and this estimate can be used as a replacement for \eqref{pzeta} in the above arguments, in particular by establishing \eqref{zeta-alone} in this regime.  
\end{remark}

\section{A motivational near-counterexample}\label{nearer}

In Example \ref{ex-1} a near-counterexample was provided to Sendov's conjecture in which $a$ was close to (and in fact equal to) $1$.  In this section we analyze (somewhat informally) a more general family of near-counterexamples of this form.  Unsurprisingly, we will eventually be able to show that no element of this family is actually a counterexample to the conjecture, but the task of definitively ruling out a counterexample turns out to be surprisingly delicate, and will serve to motivate the arguments in the next section.  However, none of the discussion in this section is strictly necessary for the proof of Theorem \ref{main}, and the impatient reader may skip ahead to the next section if desired.

Let $c_2 > c_1 > 0$ be fixed real constants, and let 
\begin{equation}\label{pj}
 P(z) = (z-\lambda_1) \dots (z-\lambda_m)
\end{equation}
be a fixed monic polynomial with distinct zeroes $\lambda_1,\dots,\lambda_m \in \C$, which we will choose later.  For $n > m$, we consider monic degree $n$ polynomials of the form
\begin{equation}\label{fad}
 f(z) \coloneqq \left(z + \frac{c_2}{n}\right)^{n-m} P(z) - \left(a + \frac{c_2}{n}\right)^{n-m} P(a) 
\end{equation}
where 
$$ a \coloneqq 1 - \frac{c_1}{n} = 1 - o(1).$$
Clearly $f$ has a zero at $a$.  The derivative can be computed as
\begin{equation}\label{fderiv}
f'(z) = (n-m) \left(z + \frac{c_2}{n}\right)^{n-m-1} \left( P(z) + \frac{z + \frac{c_2}{n}}{n-m} P'(z) \right)
\end{equation}
and thus has a zero of order $n-m-1$ at $-\frac{c_2}{n}$, which lies just outside of $\overline{D(a,1)}$ thanks to our hypothesis $c_2 > c_1$, while also inheriting the $m$ zeroes of $P(z) + \frac{z + \frac{c_2}{n}}{n-m} P'(z)$, which will take the form $\lambda_1+o(1),\dots,\lambda_m+o(1)$ by Rouche's theorem (indeed one can easily upgrade the $o(1)$ errors here to $O(1/n)$ if desired).  (In particular, a random zero $\deriv$ of $f'$ will converge to zero in probability, but not necessarily converge uniformly, which is consistent with Theorem \ref{limit-thm}(ii).)  If we now make the hypothesis that the zeroes $\lambda_1,\dots,\lambda_m$ lie in the open lune $D(0,1) \backslash \overline{D(1,1)}$ (so in particular $P(a) \neq 0$), then all the zeroes of $f'$ lie outside of $\overline{D(a,1)}$ when $n$ is large enough.

We would therefore obtain a contradiction to Sendov's conjecture if we could ensure that all zeroes $\zero$ of $f$ stayed inside the disk $\overline{D(0,1)}$ for large enough $n$.  
If $\zero$ is a zero of $f$, then from \eqref{fad} one has
$$ 
\left|\zero + \frac{c_2}{n}\right|^{n-m} |P(\zero)| = \left|a + \frac{c_2}{n}\right|^{n-m} |P(a)|.$$
The right-hand side is $\sim 1$, which (because the zeroes of $P$ are in the open disk $D(0,1)$) already forces
\begin{equation}\label{zon}
 \left|\zero + \frac{c_2}{n}\right| = 1 + O\left(\frac{1}{n}\right).
\end{equation}
By taking logarithms and using \eqref{pj}, we can rewrite the previous equation as
\begin{equation}\label{ten}
\log \left|\frac{\zero + \frac{c_2}{n}}{a + \frac{c_2}{n}}\right| = \frac{1}{n-m} \sum_{j=1}^m \log \left| \frac{a-\lambda_j}{\zero - \lambda_j}\right|.
\end{equation}
If we take an ansatz
$$ \zero = \left(1 + \frac{t}{n}\right) e^{i\theta} - \frac{c_2}{n}$$
for some real numbers $t,\theta$ with $t=O(1)$ (to be compatible with \eqref{zon}), we have the Taylor expansions
\begin{align*}
 \log \left|\frac{\zero + \frac{c_2}{n}}{a + \frac{c_2}{n}}\right| &= \frac{t}{n} - \frac{c_2-c_1}{n} + O\left( \frac{1}{n^2} \right)\\
 \log \left| \frac{a-\lambda_j}{z - \lambda_j}\right| &= \log \left|\frac{1 - \lambda_j}{e^{i\theta} - \lambda_j}\right| + O\left(\frac{1}{n}\right) \\
\frac{1}{n-m} &= \frac{1}{n} + O\left( \frac{1}{n^2} \right); 
\end{align*}
inserting these expansions into \eqref{ten} and then multiplying by $n$, we conclude that
$$ t = c_2 - c_1 + \sum_{j=1}^m \log \left|\frac{1 - \lambda_j}{e^{i\theta} - \lambda_j}\right| + O\left(\frac{1}{n}\right).$$
On the other hand, from a further Taylor expansion we have
$$ |\zero| = 1 + \frac{t - c_2 \cos \theta}{n} + O\left( \frac{1}{n^2} \right)$$
so for $n$ large enough we would be able to guarantee that all zeroes $\zero$ were in the disk $\overline{D(0,1)}$ if we could ensure the inequality
\begin{equation}\label{lamin}
c_2 - c_1  - c_2 \cos \theta + \sum_{j=1}^m \log \left|\frac{1 - \lambda_j}{e^{i\theta} - \lambda_j}\right|  < 0
\end{equation}
for all $0 \leq \theta \leq 2\pi$.  

One can rule out this inequality by taking an average in $\theta$ (or equivalently, extracting the zeroth Fourier coefficient).  Indeed, observe from Jensen's formula (or \eqref{Taylor}) that $\log \left|\frac{1 - \lambda_j}{e^{i\theta} - \lambda_j}\right|$ has a mean value of $\log |1-\lambda_j|$ for $\theta \in [0,2\pi]$; meanwhile, $c_2 \cos \theta$ has a mean of zero.  Thus if \eqref{lamin} held for all $0 \leq \theta \leq 2\pi$, then we would have
$$ c_2 - c_1 + \sum_{j=1}^m \log |1-\lambda_j| < 0.$$
But from hypothesis the quantities $c_2 - c_1$ and $\log |1-\lambda_j|$ are non-negative, which (barely) gives a contradiction.

This rules out the most naive effort to generate counterexamples to Sendov's conjecture using this ansatz.  But one might hope that by some more careful analysis of the various $O(1/n^2)$ errors that were discarded that one might be able to still salvage a counterexample assuming only the non-strict inequalities
\begin{equation}\label{dd}
c_2 - c_1 - c_2 \cos \theta + \sum_{j=1}^m \log \left|\frac{1 - \lambda_j}{e^{i\theta} - \lambda_j}\right| \leq 0
\end{equation}
and similarly relaxing the conditions $c_2 > c_1$, $\lambda_j \in D(0,1) \backslash \overline{D(1,1)}$ to their non-strict counterparts $c_2 \geq c_1$, $\lambda_j \in \overline{D(0,1)} \backslash D(1,1)$.  Then the same averaging argument as before will now give
$$ c_2 - c_1 + \sum_{j=1}^m \log |1-\lambda_j| \leq 0$$
and this forces $c_2-c_1$ and $\log |1-\lambda_j|$ to vanish.  Thus the only surviving endpoint is when $c_1=c_2$ and when all the $\lambda_j$ lie on the arc $\overline{D(0,1)} \cap \partial D(1,1)$.  The left-hand side of \eqref{dd} now is non-positive while also having zero mean, and must be identically zero, thus
\begin{equation}\label{ido}
- c_2 \cos \theta + \sum_{j=1}^m \log \left|\frac{1 - \lambda_j}{e^{i\theta} - \lambda_j}\right| = 0
\end{equation}
for all $\theta \in [0,2\pi]$.  

We have exhausted the information we can glean from the zeroth Fourier coefficient.  The first Fourier coefficient ends up providing the constraint $\sum_{j=1}^m \lambda_j = c_2$, which is interesting (it reflects the tendency of the mean $\mu = \E \zero = \E \deriv$ to be surprisingly close to zero, which we will take advantage of in the next section) but does not end up being directly useful for the purpose of establishing a contradiction. To get such a contradiction we will instead take the \emph{second} Fourier coefficient, which allows us to ignore the cosine term in \eqref{ido}.  Indeed, from \eqref{Taylor} and Fourier analysis we have
$$ \int_0^{2\pi} \log \left|\frac{1 - \lambda_j}{e^{i\theta} - \lambda_j}\right| e^{-2i \theta} \frac{d\theta}{2\pi} = \frac{\lambda_j^2}{4}$$
and
$$\int_0^{2\pi} \cos \theta e^{-2i \theta} \frac{d\theta}{2\pi} = 0.$$
Thus, from extracting the second Fourier coefficient of \eqref{ido} we obtain
\begin{equation}\label{summ}
 \sum_{j=1}^m \lambda_j^2 = 0.
\end{equation}
Now we recall the key geometric observation from Figure \ref{fig:key}: the zeroes $\lambda_j$ lie on the arc $\overline{D(0,1)} \backslash D(1,1)$ and thus either vanish, or have argument either in $[\pi/3,\pi/2]$ or $[-\pi/2,-\pi/3]$.  Thus the squares $\lambda_j^2$ either vanish, or have argument in $[2\pi/3, 4\pi/3]$; in particular, in the latter case the real part is negative (with some ``room'' to spare\footnote{In the earlier analysis of Miller \cite{miller} and Chijiwa \cite{chijiwa-disk}, \cite{chijiwa}, \cite{kasmalkar} there is even more room (remarked upon as ``quite startling'' in the discussion after \cite[Theorem 3]{miller}), since in this regime the zeroes $\deriv$ are so close to the origin that the argument is now very close to $\pm \pi/2$.  However, in our situation $\deriv$ will only converge to zero in distribution, and as the example \eqref{fad} illustrates, one can have a few zeroes of $f'$ at ``macroscopic'' distances from the origin that also need to be carefully treated.} since $[2\pi/3,4\pi/3]$ is strictly contained in $(\pi/2,3\pi/2)$).  Thus we see that the inequality \eqref{summ} can only hold for all $\theta \in [0,2\pi]$ in the degenerate case when $c_2=c_1$ and all the $\lambda_j$ vanish, which also forces the remaining term $c_2 \cos \theta$ to also vanish, and we collapse back to the near-counterexample in Example \ref{ex-1}.  Thus this attempt to generate a counterexample to Sendov's conjecture has definitively failed.  However, the reason for the failure was rather subtle, and required one to perform Taylor expansions to accuracy $O(1/n^2)$ (actually an accuracy of $o(1/n)$ would have sufficed for us), as well as an analysis of the possible solutions to the system of inequalities provided by \eqref{dd}.  Ultimately one had to utilise the geometric property about the argument of points on the arc $\overline{D(0,1)} \backslash D(1,1)$ depicted in Figure \ref{fig:key}.  On the other hand, there was some room in the argument at the end, which gives hope that one could adapt this argument to more general polynomials $f$, as long as one had a sufficient level of accuracy in one's Taylor expansions.  This belief is also buttressed by the analysis in previous work in the $a^{(\infty)}=1$ regime in \cite{miller}, \cite{vz}, \cite{chijiwa}, \cite{chijiwa-disk}, \cite{kasmalkar}, which also extracts a contradiction by performing Taylor expansions to comparable levels of accuracy to the ones employed here.  This serves as motivation for the strategy used to rigorously prove the remaining case needed for Theorem \ref{main-contr}, to which we now turn.

\section{Getting a contradiction near the unit circle}\label{circle-sec}

In view of Theorem \ref{contra-origin} and the discussion in the introduction, to conclude the proof of Theorem \ref{main} it will suffice to establish

\begin{theorem}[Contradiction near the unit circle]\label{contra-circle}  Theorem \ref{main-contr} holds when there is a fixed $\eps_0>0$ such that
\begin{equation}\label{hypo}
 1-o(1) \leq a \leq 1 - \eps_0^n
\end{equation}
(so in particular $a^{(\infty)} = 1$).  
\end{theorem}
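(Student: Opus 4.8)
The plan is to carry out the strategy announced after Theorem~\ref{limit-thm}: upgrade the qualitative conclusions of Theorem~\ref{limit-thm}(ii) to quantitative control on $f'$ and hence $f$ near the unit circle, use this to locate the zeroes of $f$ with relative precision $o(1/n)$, and then run the Fourier-coefficient computation of Section~\ref{nearer} to reach a contradiction. After passing to a subsequence so that $n$ is as large as needed, the first step is to collect the quantitative inputs. From Theorem~\ref{limit-thm}(ii) and \eqref{zetas} (as already extracted in \eqref{ffd}) we have $|f(0)| \sim 1$ and $|f'(a)| \sim n$; the zeroes of $f$ accumulate on $\partial D(0,1)$ and in fact equidistribute there, so $f$ has a zero within $o(1)$ of $e^{i\theta}$ for every fixed $\theta$; the zeroes of $f'$ accumulate at the origin, and by \eqref{zetas} and Markov's inequality all but $O(1)$ of them lie within any fixed neighbourhood of the arc $\mathcal{A} \coloneqq \overline{D(0,1)} \cap \partial D(a,1)$. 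We also record that the common mean $\mu \coloneqq \E\zero = \E\deriv$ satisfies $\Re\mu = o(1)$ (from the D\'egot-type argument in the proof of Theorem~\ref{limit-thm}(ii)) and in fact $\mu \to \E\zero^{(\infty)} = 0$.

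Next I would isolate the ``macroscopic'' zeroes of $f'$. Fix a threshold $\delta = \delta^{(n)} \to 0$, shrinking slowly enough that the number $k$ of zeroes $\lambda_1,\dots,\lambda_k$ of $f'$ with $|\lambda_j| \ge \delta$ is $o(n)$, but fast enough (e.g.\ $\delta = o(n^{-1/2})$) that the remaining ``microscopic'' zeroes are controllable. Splitting $\tfrac{f''}{f'}$ accordingly and Taylor-expanding each microscopic term $\tfrac{1}{z-\lambda_j}$ as $\tfrac1z + \tfrac{\lambda_j}{z^2} + O(\delta^2)$ uniformly on a fixed annular neighbourhood of $\partial D(0,1)$ that stays away from $\mathcal{A}$, one obtains on that neighbourhood
$$ \frac{f''}{f'}(z) = \sum_{j=1}^k \frac{1}{z-\lambda_j} + \frac{n-1-k}{z} + \frac{(n-1)\mu - \sum_{j=1}^k \lambda_j}{z^2} + O(n\delta^2), $$
so that by Lemma~\ref{basic}(iv) the transform $s_\deriv$ is pinned down up to an error that is $o(1/n)$ after dividing by $n-1$. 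Integrating this along contours of bounded length and exponentiating via Lemma~\ref{basic}(vi) gives $f'(z)$ on the annular neighbourhood, up to a factor $1+o(1)$, as an explicit function of $z$, the $\lambda_j$, $\mu$, and one unknown constant; integrating once more and calibrating that constant using $|f(0)| \sim 1$ yields the analogous control on $f(z)$ itself.

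With $f$ in hand near $\partial D(0,1)$, for each fixed angle $\theta$ one solves $f(\zero_\theta)=0$ for the nearby zero and writes $\zero_\theta = (1 + t(\theta)/n)e^{i\theta} + o(1/n)$, with $t(\theta)$ of exactly the shape appearing in \eqref{ten}; the constraint $|\zero_\theta|\le 1$, now over a dense set of $\theta$, gives in the limit a non-strict inequality of the form \eqref{dd}. Taking its zeroth Fourier coefficient forces a sum of non-negative quantities to vanish, which drives the macroscopic $\lambda_j$ onto the arc $\overline{D(0,1)} \cap \partial D(1,1)$ and collapses \eqref{dd} to an identity of the form \eqref{ido}; extracting the second Fourier coefficient then gives $\sum_{j=1}^k \lambda_j^2 = o(k)$ in magnitude. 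But by the geometric fact of Figure~\ref{fig:key}, each such $\lambda_j$ satisfies $\Re(\lambda_j^2) \le -\tfrac12|\lambda_j|^2 \le -\tfrac12\delta^2$, so $\Re\sum_{j=1}^k\lambda_j^2 \le -\tfrac12 k\delta^2$, which is incompatible with $\sum_j \lambda_j^2 = o(k\delta^2)$ unless $k=0$. The residual case $k=0$ (all zeroes of $f'$ within $\delta$ of the origin) must be ruled out by a direct argument: there $f$ behaves like $z^n - f(0)$, forcing all its zeroes, in particular $a$, to share a common modulus, and combining this with the requirement that the zeroes of $f'$ avoid $\overline{D(a,1)}$ (hence lie ``to the left'' of $\partial D(a,1)$) and with the lower bound $1-a \ge \eps_0^n$ of \eqref{hypo} produces a contradiction; \eqref{hypo} also enters earlier, through the bound $\log\tfrac{1}{1-a} = O(n)$, when the contour used to reach the comparatively deep zero of $f$ at $a$ must be handled with the residue of $s_\zero$ at $a$ kept explicit.

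\textbf{Main obstacle.} As the heuristic of Section~\ref{nearer} already warns, the delicate part is precision bookkeeping: because Lemma~\ref{basic}(vi) exponentiates the integrated transform by $n-1$, every approximation feeding into the shape of $f'$ and $f$ near $\partial D(0,1)$ must be controlled to relative accuracy $o(1/n)$. Concretely, this forces one either to improve $\mu = o(1)$ to something like $\mu = o(1/n)$, or else to carry the term $(n-1)\mu$ (equivalently, the sum of the microscopic zeroes of $f'$) intact through the entire computation until it is pinned down by the first Fourier coefficient; and it forces a careful treatment of the fact that $a$ may be only $o(1)$-close, not $O(1/n)$-close, to $1$, so that the zero of $f$ at $a$ escapes the bounded-$t$ ansatz and must be analysed separately using \eqref{hypo}. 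The rest should reduce to a careful but essentially routine elaboration of the computation in Section~\ref{nearer}, together with the potential-theoretic control already established in Theorem~\ref{limit-thm}(ii).
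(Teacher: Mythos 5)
Your strategy matches the paper's in outline --- both elaborate the heuristic of Section~\ref{nearer}, Taylor-expand $s_\deriv$ and integrate it via Lemma~\ref{basic}(vi), locate the zeroes of $f$ near $\partial D(0,1)$ as in \eqref{ten}, and then play the zeroth and second Fourier coefficients against the geometry of Figure~\ref{fig:key} --- but the implementation has two genuine gaps. First, your hard-threshold separation of the zeroes of $f'$ into ``macroscopic'' ($|\lambda_j|\geq\delta$) and ``microscopic'' pieces is circular as stated: to make the microscopic Taylor error $o(1/n)$ after the $(n-1)$-fold exponentiation in Lemma~\ref{basic}(vi) you ask for $\delta=o(n^{-1/2})$, while to keep $k$ small and treat the macroscopic piece as a fixed polynomial $P$ you need $\delta$ to dominate the standard deviation of $\deriv$ --- but you do \emph{not} know a priori that the variance $\sigma^2 \coloneqq \E|\deriv-\mu|^2$ is $o(1/n)$; a bound of that strength is the output of the argument, not an available input. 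The paper avoids the threshold entirely: it takes $\sigma^2$ itself as the controlling small parameter, establishes $s_\deriv(z)=\frac{1}{z-\mu}+O(\sigma^2)$ uniformly away from a \emph{fixed} compact set $S$ (the limiting arc plus a countable set $T$ extracted by the diagonal argument of Proposition~\ref{conv}), and carries every downstream error as $O(\sigma^2)$ or $o(\sigma^2)$ plus a superexponential $o(1)^n$ tail. Expanding around $\mu$ rather than $0$ is precisely what keeps the first-moment term intact through the exponentiation, which you flag as the main obstacle but do not actually resolve.

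Second, the endgame is underdeveloped and would not in fact cover the full range \eqref{hypo}. Your Fourier argument drives you to $k=0$, after which appealing to ``$f$ behaves like $z^n-f(0)$'' and the no-zero condition for $f'$ in $\overline{D(a,1)}$ yields only $1-a\lesssim\delta$, a polynomially small bound; but \eqref{hypo} permits $1-a$ to be as small as $\eps_0^n$, so no contradiction follows at this scale. What is needed is a \emph{superexponential} bound $1-a=o(1)^n$. The paper gets this with no case split: from the second Fourier coefficient it proves $\E\deriv^2=o(\sigma^2)+o(1)^n$, feeds this into $\Re(\deriv^2)\leq -\tfrac14|\deriv|^2+O((1-a)^2+\log^2|a-\deriv|)$ together with the already-established $1-a,\ \E\log|a-\deriv|\lesssim\sigma^2+(\eps+o(1))^n$ (Proposition~\ref{xi-fine}), and concludes $\E|\deriv|^2=o(1)^n$, hence $\sigma^2=o(1)^n$ and $1-a=o(1)^n$, which contradicts \eqref{hypo} outright. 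Your proposal identifies the right landmarks but stops short of the variance-based bookkeeping that makes both the approximation and the final quantitative contradiction go through.
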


As discussed in the previous section, the analysis required to prove Theorem \ref{contra-circle} is more delicate than that required to prove Theorem \ref{contra-origin}, due to the need to eliminate near-counterexamples such as \eqref{fad}.  In the case when $a$ was extremely close to $1$, this was achieved in \cite{miller}, \cite{vz} by Taylor expanding out all relevant expressions to second order in the zeroes $\deriv$ of $f'$ (with third-order terms $O( |\deriv|^3 )$ being ultimately negligible).  We will adopt a similar strategy here, in particular finding approximations to the zeroes of $f$ that are accurate to an error of $o( \sigma^2 ) + o(1)^n$, where $\sigma^2$ is the variance of $\deriv$ (cf. \cite[Lemma 7]{miller}).  The presence of the superexponentially small error $o(1)^n$ is the reason why we need a weak upper bound on $a$ in \eqref{hypo}.

We now begin the proof.  It will be useful to work with the mean
$$ \mu \coloneqq \E \deriv = \E \zero$$
and variance
$$ \sigma^2 \coloneqq \E |\deriv - \mu|^2 = \E |\deriv|^2 - |\mu|^2$$
of $\deriv$.  Since $\deriv$ takes values in $\overline{D(0,1)}$ and converges in probability to zero thanks to Theorem \ref{limit-thm}(ii), we see from \eqref{convert} that these quantities are small:
\begin{equation}\label{xi-mom}
\mu, \sigma = o(1).
\end{equation}
Also we clearly have the identity
\begin{equation}\label{var-ident}
\E |\deriv|^2 = |\mu|^2 + \sigma^2.
\end{equation}
In practice, the $\sigma^2$ term in \eqref{var-ident} will dominate the $|\mu|^2$ term; see \eqref{meao} below.

The convergence in probability of $\deriv$ to zero is also enough to establish some preliminary control on $f$ and its derivatives.  

\begin{lemma}[Preliminary bounds]\label{basic-bounds}  For any fixed compact subset $K \subset \C$, we have the bounds
$$ f(z) = f(0) + O( (|z|+o(1))^n )$$
and
$$ f'(z) = O( (|z|+o(1))^n )$$
for all $z \in K$.
\end{lemma}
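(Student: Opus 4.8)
The plan is to derive both bounds from the single input that the zeroes of $f'$ concentrate near the origin (Theorem~\ref{limit-thm}(ii)); since the bound on $f$ will follow from that on $f'$ by integrating along a segment through the origin, I would establish the $f'$ bound first. Writing $f'(z)=n\prod_{j=1}^{n-1}(z-\deriv_j)$ with $\deriv_1,\dots,\deriv_{n-1}$ the zeroes of $f'$ counted with multiplicity, the triangle inequality followed by the AM--GM inequality applied to the $n-1$ non-negative numbers $|z|+|\deriv_j|$ yields
\[
 |f'(z)| \;\le\; n\prod_{j=1}^{n-1}\bigl(|z|+|\deriv_j|\bigr) \;\le\; n\Bigl(|z|+\tfrac{1}{n-1}\textstyle\sum_{j=1}^{n-1}|\deriv_j|\Bigr)^{\!n-1} \;=\; n\bigl(|z|+\E|\deriv|\bigr)^{n-1}
\]
for all $z\in\C$. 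By \eqref{var-ident} and \eqref{xi-mom} (or directly, since $\deriv$ is bounded and converges to $0$ in probability) we have $u\coloneqq\E|\deriv|\le(\E|\deriv|^2)^{1/2}=o(1)$, so $|f'(z)|\le n(|z|+u)^{n-1}$ for all $z\in\C$.

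The remaining task is to convert $n(|z|+u)^{n-1}$ into the claimed form $(|z|+o(1))^n$, i.e.\ to absorb the factor $n$ and to promote the exponent from $n-1$ to $n$. This is the only point requiring care: for $z$ near the origin it forces the error term to decay no faster than a fixed multiple of $\frac{\log n}{n}$. Accordingly, given a fixed compact $K$, set $R\coloneqq\sup_{z\in K}|z|$ and $\delta\coloneqq 2u+C\frac{\log n}{n}=o(1)$ for a sufficiently large fixed $C=C(R)$, so that $\delta\ge 2u$ and $\delta\ge C\frac{\log n}{n}$. Then for $n$ large and every $z\in K$, using $|z|+u\le R+1$ and $\log(1+x)\ge x/2$ on $[0,1]$,
\[
 \Bigl(\tfrac{|z|+\delta}{|z|+u}\Bigr)^{n-1} \;=\; \Bigl(1+\tfrac{\delta-u}{|z|+u}\Bigr)^{n-1} \;\ge\; \Bigl(1+\tfrac{\delta/2}{R+1}\Bigr)^{n-1} \;\ge\; \exp\!\Bigl(\tfrac{(n-1)\delta}{4(R+1)}\Bigr) \;\ge\; n^{2},
\]
the last step holding once $C$ is chosen large enough. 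Hence $(|z|+\delta)^n\ge(|z|+\delta)\cdot n^2\cdot(|z|+u)^{n-1}\ge n(|z|+u)^{n-1}\ge|f'(z)|$, giving $f'(z)=O\bigl((|z|+o(1))^n\bigr)$ (indeed with implied constant $1$) uniformly for $z\in K$.

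For the bound on $f$, enlarge $K$ to a closed disk $\overline{D(0,R)}\supseteq K$ and apply the $f'$ bound just proved on $\overline{D(0,R)}$; then for $z\in K$ the segment $\{tz:0\le t\le1\}$ lies in $\overline{D(0,R)}$, so
\[
 |f(z)-f(0)| \;=\; \Bigl|\int_0^1 f'(tz)\,z\,dt\Bigr| \;\le\; |z|\max_{0\le t\le1}|f'(tz)| \;\le\; R\,(|z|+o(1))^n \;=\; O\bigl((|z|+o(1))^n\bigr),
\]
since $|tz|\le|z|$. The only genuine obstacle is the exponent-and-constant bookkeeping in the second paragraph; everything else is the triangle inequality, AM--GM, and the fundamental theorem of calculus applied to the concentration statement of Theorem~\ref{limit-thm}(ii).
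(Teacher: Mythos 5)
Your proof is correct, and it takes a genuinely different route from the paper's. The paper's argument goes through the logarithmic-potential identity of Lemma~\ref{basic}(iii), bounds $\E \log|z-\deriv|$ via convergence in distribution for $z$ in the annulus $\overline{D(0,R)}\setminus D(0,\eps)$, invokes the maximum principle to push the resulting bound $|f'(z)|\le e^{o(n)}\max(|z|,\eps)^{n-1}$ into the whole disk, and then uses the underspill principle to fold the auxiliary fixed $\eps$ into a single $o(1)$. You instead apply the triangle inequality and AM--GM directly to the factored form of $f'$, which yields the clean global inequality $|f'(z)|\le n\bigl(|z|+\E|\deriv|\bigr)^{n-1}$ for \emph{all} $z\in\C$, with no need for the maximum principle, the annulus, or the underspill device; the only input from the surrounding theory is $\E|\deriv|=o(1)$, which is weaker than what the paper uses. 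The price you pay is the explicit bookkeeping in the second paragraph to absorb the prefactor $n$ and promote the exponent from $n-1$ to $n$, and you carry that out correctly (the padding by $C\frac{\log n}{n}$ is exactly the right size; one may check that even the edge cases $z=0$ or $\E|\deriv|=0$ cause no trouble). In short, your argument is more elementary and self-contained, while the paper's is a more automated application of machinery it has already set up; both give the same conclusion with the same strength.
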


These bounds may be compared with the situation in Example \ref{ex-1} or in \eqref{fad}, \eqref{fderiv}.

\begin{proof}  We may take $K = \overline{D(0,R)}$ for a fixed $R>0$. By the underspill principle (Example \ref{underspill}), it suffices to show that for any fixed $0 < \eps < R$, one has
$$ |f(z) - f(0)| \leq(|z|+\eps+o(1))^n$$
and
$$ |f'(z)| \leq (|z|+\eps+o(1))^n$$
for all $z \in \overline{D(0,R)}$.  The first bound is implied by the second by the fundamental theorem of calculus, so it suffices to prove the latter bound.

For all $z \in \overline{D(0,R)} \backslash D(0,\eps)$, one has from \eqref{convert} that
\begin{align*}
 \E \log |z-\deriv| &\leq \E \log \max( |z-\deriv|, \eps/2)\\
&= \E \log \max( |z-\deriv^{(\infty)}|, \eps/2) + o(1)\\
&= \log \max(|z|,\eps/2) + o(1)\\
&= \log |z| + o(1).
\end{align*}
In particular, from Lemma \ref{basic}(iii) we have the bound
$$ |f'(z)| = n \exp( (n-1) \E \log |z-\deriv| ) \leq e^{o(n)} |z|^{n-1}$$
for all $z \in \overline{D(0,R)} \backslash D(0,\eps)$ (note that the factor of $n$ can be absorbed into the $e^{o(n)}$ term).  By the maximum principle, we conclude that
$$ |f'(z)| \leq e^{o(n)} \max(|z|,\eps)^{n-1}$$
for all $z \in \overline{D(0,R)}$, and the claim follows.
\end{proof}

It would simplify the arguments below significantly if we could also make the stronger assertion that $\deriv$ converges \emph{uniformly} to $0$.  This is unfortunately not easy to do (basically because of the presence of near-counterexamples such as \eqref{fad}, whose derivative $f'$ does possess zeroes $\deriv$ that converge to non-zero values), but we have the following weaker substitute:

\begin{proposition}[Uniform convergence of $\deriv$]\label{conv}  After passing to a subsequence of $n$ if necessary, there exists a fixed compact subset $S \subset \overline{D(0,1)}$ of the form
\begin{equation}\label{S-def}
 S = (\overline{D(0,1)} \cap \partial D(1,1)) \cup T
\end{equation}
where $T$ is a fixed and at most countable subset of the lune $\overline{D(0,1)} \backslash \overline{D(1,1)}$, all elements of which are isolated points of $S$, with the property that $\mathrm{dist}(\deriv, S)$ converges uniformly to zero.  Equivalently (by Example \ref{unif}), there exists $\eps = o(1)$ such that $\mathrm{dist}(\deriv,S) \leq \eps$ surely.
\end{proposition}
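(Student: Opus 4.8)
The plan is to pass to a limit by compactness and then verify the structural claim about the limit.  For each $n$ let $Z_n \subset \overline{D(0,1)}$ be the (finite, hence compact) set of zeroes of $f'$, ignoring multiplicity.  Since the collection of nonempty compact subsets of the compact metric space $\overline{D(0,1)}$ is itself compact in the Hausdorff metric (Blaschke selection), after passing to a subsequence we may assume $Z_n$ converges in the Hausdorff metric to a compact set $Z_\infty \subseteq \overline{D(0,1)}$.  One half of Hausdorff convergence is precisely the assertion $\sup_{w \in Z_n} \mathrm{dist}(w, Z_\infty) \to 0$; since $\deriv$ surely ranges over $Z_n$, this says exactly that $\mathrm{dist}(\deriv, Z_\infty)$ converges uniformly to zero (cf. Example \ref{unif}).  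It therefore suffices to show that, after a possibly further refinement of the subsequence, $Z_\infty$ is contained in a compact set $S$ of the form \eqref{S-def}; then $S$ itself does the job.

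Two reductions are immediate.  First, since $\deriv$ surely lies in $\overline{D(0,1)} \backslash \overline{D(a,1)}$ by Lemma \ref{basic}(i) and $a = 1 - o(1)$, every $w \in Z_\infty$ is a limit of points $z_n$ with $|z_n| \leq 1$ and $|z_n - a| > 1$, whence $|w| \leq 1$ and $|w - 1| \geq 1$; thus $Z_\infty$ is contained in the closed crescent $\overline{D(0,1)} \cap \{|z-1| \geq 1\}$.  Second, this closed crescent is the disjoint union of the arc $\overline{D(0,1)} \cap \partial D(1,1)$ and the lune $R \coloneqq \overline{D(0,1)} \cap \{|z-1| > 1\} = \overline{D(0,1)} \backslash \overline{D(1,1)}$, and we may place the arc wholesale into $S$.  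Setting $T \coloneqq Z_\infty \cap R$, it remains to check that $T$ is at most countable and that each of its points is an isolated point of $S \coloneqq (\overline{D(0,1)} \cap \partial D(1,1)) \cup T$.

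The key input is the estimate $\E \log|\deriv - a| \lesssim \tfrac1n$ from \eqref{zetas}.  Since $|\deriv - a| > 1$ surely by Lemma \ref{basic}(i), the quantity $\log|\deriv - a|$ is surely positive, so, writing $\deriv_1,\dots,\deriv_{n-1}$ for the zeroes of $f'$, the bound $\tfrac{1}{n-1}\sum_j \log|\deriv_j - a| = \E \log|\deriv - a| \lesssim \tfrac1n$ yields the clean estimate $\sum_{j=1}^{n-1} \log|\deriv_j - a| = O(1)$.  Consequently, for every fixed $s > 0$, the number of $j$ with $|\deriv_j - a| \geq e^s$ is $O(1/s)$.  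Now if $K \subset R$ is any fixed compact set, then $|z-1| \geq 1 + \eta$ on $K$ for some fixed $\eta > 0$, hence $|z - a| \geq 1 + \tfrac{\eta}{2}$ on $K$ once $n$ is large, and therefore $f'$ has at most $O_K(1)$ zeroes in $K$ — a bound depending on $K$ but not on $n$.  Fixing a compact exhaustion $K_1 \subseteq \mathrm{int}(K_2) \subseteq K_2 \subseteq \cdots$ of $R$ and passing to a subsequence (a diagonal argument over $m$) so that, for each $m$, the at most $O_{K_m}(1)$ zeroes of $f'$ in $K_m$ converge to a fixed finite set $T_m \subseteq K_m$, we get $Z_\infty \cap R = \bigcup_m T_m =: T$, which is at most countable.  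Each $t \in T$ has $|t-1| > 1$, so $\mathrm{dist}(t, \overline{D(0,1)} \cap \partial D(1,1)) \geq |t-1| - 1 > 0$, and $t$ is isolated among the points of $T$ (since $T \cap K_m$ is finite for the $K_m$ containing $t$ in its interior); hence $t$ is an isolated point of $S$.  Finally $S$ is closed (all of its accumulation points lie on the arc) and $T \subseteq \overline{D(0,1)} \backslash \overline{D(1,1)}$ lies in the lune, so $S$ has the form \eqref{S-def} and contains $Z_\infty$, completing the proof.

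The substantive content sits entirely in Theorem \ref{limit-thm}(ii), which supplies the estimate \eqref{zetas}; granting that, the remainder is bookkeeping.  The one point that genuinely requires care — and the closest thing here to a real obstacle — is that the per-compact bound $O_K(1)$ must degrade as $K$ approaches the arc $\overline{D(0,1)} \cap \partial D(1,1)$ (indeed roughly $n$ of the zeroes of $f'$ pile up arbitrarily near the origin, which lies on that arc), so there is no uniform version of it and the countable set $T$ must be assembled from an exhaustion rather than obtained at a single scale.  A secondary technical point is to fix, after passing to the subsequence, a consistent labelling of the boundedly many zeroes of $f'$ in each $K_m$ so that it is meaningful to speak of their limits; this is routine.
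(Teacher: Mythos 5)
Your proof is correct and follows essentially the same approach as the paper's: the key input in both cases is the estimate $\E \log|\deriv-a| \lesssim 1/n$ from Theorem~\ref{limit-thm}(ii), which forces only $O(1)$ zeroes of $f'$ on any fixed compact set bounded away from the arc $\overline{D(0,1)} \cap \partial D(1,1)$, after which a diagonal-subsequence extraction produces the countable set $T$. The only cosmetic differences are that the paper organizes the exhaustion by the partial annuli $\overline{D(0,1)} \cap \overline{D(1,1+\tfrac1m)} \setminus D(1,1+\tfrac1{m+1})$ and invokes Markov's inequality on the law of $\deriv$, whereas you unpack the expectation as $\sum_j \log|\deriv_j - a| = O(1)$ directly and work with an arbitrary compact exhaustion of the open lune, with Blaschke/Hausdorff selection as a convenient wrapper for the Bolzano--Weierstrass step.
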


We remark that in the example \eqref{fad}, the set $T$ can be taken to be the zero set $\{\lambda_1,\dots,\lambda_m\}$ of the polynomial $P$.

\begin{proof} From Lemma \ref{basic}(i) we see $\deriv$ surely lies in a lune of the form $\overline{D(0,1)} \backslash D(1,1-o(1))$.  Now let $m$ be a fixed natural number.  For $n$ large enough, we see from Theorem \ref{limit-thm}(ii) and Markov's inequality that
$$ \P\left( \log |\deriv - a| \geq \frac{1}{2m} \right) \lesssim 1/n $$
and hence for $n$ large enough there are only $O(1)$ zeroes of $f$ in the partial annulus
\begin{equation}\label{annular}
\overline{D(0,1)} \cap \overline{D\left(1,1+\frac{1}{m}\right)} \backslash D\left(1,1+\frac{1}{m+1}\right)
\end{equation}
(not counting multiplicity).  Passing to a subsequence of $n$, we may assume (by the infinite pigeonhole principle) that the number $k_m$ of such zeroes is fixed, and then by the Bolzano--Weierstrass theorem we can then find fixed complex numbers $z_{m,1},\dots,z_{m,k_m}$ in the compact set \eqref{annular}, with the property that any zero $\zero$ of $f$ that lies in \eqref{annular} is of the form $z_{m,j}+o(1)$ for some $j=1,\dots,k_m$.  By repeatedly passing to a subsequence in this fashion for $m=1,2,\dots$, and then passing to a diagonal subsequence (as in the proof of the Arzel\'a--Ascoli theorem), we can then reach a subsequence for which there exist fixed complex numbers $z_{m,1},\dots,z_{m,k_m}$ in \eqref{annular} for \emph{each} fixed natural number $m$, with the property that any zero $\zero$ of $f$ that lies in \eqref{annular} for one of these $m$ is of the form $z_{m,j}+o(1)$ for some $j=1,\dots,k_m$.  If we then set $T \coloneqq \bigcup_{m=1}^\infty \{z_{m,1},\dots,z_{m,k_m}\}$ and let $S$ be the set \eqref{S-def}, we see that $\zero$ surely lies within $o(1)$ of $T$, and the claim follows.
\end{proof}

We remark that Theorem \ref{limit-thm}(ii) and some additional effort in fact yields the bound $\sum_{z \in T} \log |z-1| < \infty$, but we will not need this bound in our arguments.

Henceforth we pass to a subsequence and extract the sets $S,T$ provided by Proposition \ref{conv}.  When one is far from $S$, one is able to obtain good approximations for $f$ and $f'$:

\begin{proposition}[Approximating $f, f'$ outside of $S$]\label{f-approx}  Let $K$ be a fixed compact subset of $\C \backslash S$.  
\begin{itemize}
\item[(i)]  (Approximation of $f'$) One has
$$ f'(w) = \left(1 + O\left( n |z-w| \sigma^2 e^{o(n |z-w|)}\right)\right) \frac{f'(z)}{(z - \mu)^{n-1}} (w - \mu)^{n-1} $$
for all $z,w \in K$.
\item[(ii)]  (Approximation of $f$) For any fixed $\eps>0$, one has
\begin{equation}\label{fz-f0}
 f(z) = f(0) + \frac{1 + O( \sigma^2 )}{n} f'(z) (z - \mu) + O( (\eps+o(1))^n )
\end{equation}
for all $z \in K$.
\end{itemize}
\end{proposition}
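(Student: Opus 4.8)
The plan is to extract everything from the key identity \eqref{lfp}, which expresses $\log|f'|$ in terms of the logarithmic potential $U_\deriv$, combined with \eqref{logderiv-fp} for $s_\deriv = \frac{1}{n-1}\frac{f''}{f'}$, and to Taylor-expand these to second order in $\deriv$. The point is that on a fixed compact $K \subset \C\backslash S$, the variable $\deriv$ is bounded away from $K$ (uniformly, by Proposition \ref{conv}), so $\log|z-\deriv|$ and $\frac{1}{z-\deriv}$ are nice analytic functions of $\deriv$ near the (small) support of $\deriv$, and we may expand around $\deriv = \mu$. For part (i), I would start from the contour-integral representation \eqref{integd-f}: for $z,w \in K$, writing $\gamma$ for the segment from $z$ to $w$ (which stays in $K\subset\C\backslash S$ for $K$ convex; in general cover $K$ by finitely many convex pieces, or just take $K$ to be a disk as in the applications), we have $f'(w) = f'(z)\exp\left((n-1)\int_\gamma s_\deriv(u)\,du\right)$. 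Now Taylor-expand $s_\deriv(u) = \E\frac{1}{u-\deriv}$ in powers of $(\deriv-\mu)$: the zeroth-order term gives $\frac{1}{u-\mu}$, the first-order term vanishes because $\E(\deriv-\mu)=0$, and the remainder is $\E\frac{(\deriv-\mu)^2}{(u-\deriv)(u-\mu)^2} = O(\sigma^2)$ uniformly for $u$ on $\gamma$ (using $\mathrm{dist}(\deriv,K)\gtrsim 1$ and $\mathrm{dist}(\mu,K)\gtrsim 1$, the latter since $\mu = o(1)$ and $0 \notin K$ — wait, $0$ could be in $K$, but $\mu=o(1)$ so the bound $\frac{1}{|u-\mu|}$ is still $O(1)$ as long as $u$ stays away from $0$; for $u$ near $0$ one instead uses $\mathrm{dist}(\deriv,0)\le\eps=o(1)$ and $|\mu|=o(1)$ so $|u-\deriv|,|u-\mu|$ are small, giving the $e^{o(n|z-w|)}$ blow-up factor). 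Thus $\int_\gamma s_\deriv\,du = \int_\gamma \frac{du}{u-\mu} + O(|z-w|\sigma^2 e^{o(n|z-w|)})$, and $(n-1)\int_\gamma\frac{du}{u-\mu} = (n-1)(\log(w-\mu) - \log(z-\mu))$ along a suitable branch; exponentiating and using $e^x = 1 + O(|x|e^{|x|})$ yields the stated formula $f'(w) = \left(1 + O(n|z-w|\sigma^2 e^{o(n|z-w|)})\right)\frac{f'(z)}{(z-\mu)^{n-1}}(w-\mu)^{n-1}$.

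For part (ii) I would integrate the approximation in (i). Fix a reference point $z_0 \in K$; by the fundamental theorem of calculus $f(z) = f(0) + \int_0^z f'(w)\,dw$ along a path from $0$ to $z$. Split the path: the portion near the origin (inside a small disk $D(0,\eps')$) contributes $O((\eps'+o(1))^n)$ by Lemma \ref{basic-bounds} (here is where the $O((\eps+o(1))^n)$ error term and the need for a weak upper bound on $a$ ultimately enter, via the superexponentially small scale), while on the portion inside $K$ (which one can arrange by choosing the path to leave $D(0,\eps')$ quickly and then stay in $K$) we substitute the formula from (i) with $z$ as the basepoint: $f'(w) = \left(1+O(\sigma^2)\right)\frac{f'(z)}{(z-\mu)^{n-1}}(w-\mu)^{n-1}$ for $w$ on this portion — the factor $e^{o(n|z-w|)}$ is harmless because $|z-w| = O(1)$ is bounded, so $n|z-w|\sigma^2 e^{o(n|z-w|)} = O(\sigma^2) \cdot n e^{o(n)}$; this must be $O(\sigma^2)$, which forces... hmm, actually the clean way is: the claimed error in \eqref{fz-f0} is $O(\sigma^2)$ relative, so I should only claim $f'(w) = (1+O(\sigma^2))\frac{f'(z)}{(z-\mu)^{n-1}}(w-\mu)^{n-1}$ for $w$ ranging over a segment of bounded length joining $z$ to (near) the origin, where the relative error genuinely is $O(\sigma^2)$ because on such a segment $n|z-w|$ need not be small — so instead I re-anchor: apply (i) with $z$ fixed and $w$ on the segment, and absorb the $n|z-w|e^{o(n|z-w|)}$ into the implied constant only after noting $\sigma^2 \le o(1)$ kills it... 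The honest resolution is that one integrates $\int (w-\mu)^{n-1}\,dw = \frac{(w-\mu)^n}{n}$ exactly, and the relative $O(\sigma^2)$ error is controlled because, by (i) applied with a fixed basepoint $z\in K$ far from $S$ and from $0$, the main term $\frac{f'(z)}{(z-\mu)^{n-1}}(w-\mu)^{n-1}$ already dominates for $w$ in $K$ away from $0$, and near $0$ we switch to Lemma \ref{basic-bounds}. So: $\int_{0}^{z} f'(w)\,dw = (1+O(\sigma^2))\frac{f'(z)}{(z-\mu)^{n-1}}\cdot\frac{(z-\mu)^n - (-\mu)^n}{n} + O((\eps+o(1))^n)$, and since $|\mu| = o(1)$ we have $(-\mu)^n = O(o(1)^n)$ which is absorbed; this gives $f(z) = f(0) + \frac{1+O(\sigma^2)}{n}f'(z)(z-\mu) + O((\eps+o(1))^n)$ as claimed.

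The main obstacle, and the step I would be most careful about, is the bookkeeping of the error term in part (i) when the contour $\gamma$ passes near the origin: there $\mu$ and $\deriv$ are both $o(1)$, so the quantities $|u-\mu|$ and $|u-\deriv|$ appearing in the denominators of the Taylor remainder are small, producing the factor $e^{o(n|z-w|)}$ — one has to verify that the exponent really is $o(n|z-w|)$ and not something worse, which requires a careful pointwise bound $\log\frac{1}{|u-\deriv|} \le \log\frac{1}{|u-\mu|} + o(1)$ type estimate of the kind already used in the proof of Lemma \ref{basic-bounds}, combined with the sure bound $\mathrm{dist}(\deriv,S)\le\eps = o(1)$ from Proposition \ref{conv}. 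The second delicate point is making sure, in part (ii), that the error genuinely collapses to $O(\sigma^2)$ (not $O(\sigma^2 e^{o(n)})$): this works because we only ever integrate over a path of length $O(1)$ that stays in $K$ except for a tiny initial segment handled separately by Lemma \ref{basic-bounds}, so $|z-w|=O(1)$ and the exponential factor $e^{o(n|z-w|)}$ along the $K$-portion is moot once we realize the correct statement uses a fixed basepoint — the relative $O(\sigma^2)$ error comes purely from the second-order Taylor remainder of $s_\deriv$, which is $O(\sigma^2)$ uniformly on $K$, and this is what survives. A convexity/connectedness remark to reduce to the case where $\gamma$ can be taken to lie in $K$ (e.g. taking $K$ to be a closed disk, which suffices for the later applications) should be included to keep the contour arguments clean.
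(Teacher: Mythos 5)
Your part (i) is essentially the argument in the paper, modulo one confusion: since $0 \in \overline{D(0,1)} \cap \partial D(1,1) \subset S$, we automatically have $0 \notin K$, and any contour joining $z,w \in K$ within a slightly enlarged compact $K' \subset \C\setminus S$ (whose existence is supplied by a compactness argument, not by assuming $K$ convex) stays bounded away from $0$. Thus $|u-\deriv|, |u-\mu| \sim 1$ uniformly along such a contour, and there is no ``near $0$'' regime in (i) at all. The factor $e^{o(n|z-w|)}$ does not come from the contour approaching the origin; it arises simply from inserting $\sigma^2 = o(1)$ into the exponent $e^{O(n|z-w|\sigma^2)}$ after applying $e^x = 1 + O(|x|e^{|x|})$.

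Part (ii) is where there is a genuine gap. You correctly propose to integrate $f'$ from $z$ along a path towards the origin, handle the last short piece via Lemma \ref{basic-bounds}, and anchor (i) at the fixed basepoint $z$. But you never resolve why the accumulated relative error is $O(\sigma^2)$ rather than $O(n\sigma^2 e^{o(n)})$: you notice that the factor $n|z-w|\sigma^2 e^{o(n|z-w|)}$ from (i) is of size $n\sigma^2 e^{o(n)}$ when $|z-w|=O(1)$, flag this as a difficulty, and then step around it without a resolution. The missing mechanism, which is the real content of the paper's proof of (ii), is a careful choice of contour $\gamma\colon[0,1]\to\C\setminus S$ from $z$ to a point of magnitude $\eps$ satisfying $\gamma'(t)=O(1)$ and $-z\cdot\gamma'(t)\gtrsim 1$, that is, always moving towards the origin. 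This forces $|w-\mu|\le|z-\mu|-ct$ for a \emph{fixed} $c>0$ when $w=\gamma(t)$, hence
\[
\left(\frac{|w-\mu|}{|z-\mu|}\right)^{n-1}\lesssim e^{-cnt}.
\]
This exponential decay, with a rate bounded below by a fixed constant, is what beats both the factor of $n$ and the growth $e^{o(nt)}$ coming from (i), giving
\[
n\sigma^2\int_0^1 t\,e^{o(nt)}e^{-cnt}\,dt\lesssim \frac{\sigma^2}{n},
\]
which is then absorbed into the main term $\frac{1}{n}f'(z)(z-\mu)$ as a relative $O(\sigma^2)$. Without this directional choice of contour the estimate does not close, and your own text circles around the issue rather than resolving it. You should also be explicit that the path cannot reach $0$ (since $0\in S$): it stops at magnitude $\eps$, the antiderivative contributes $\frac{(z-\mu)^n}{n}+O((\eps+o(1))^n)$, and the remaining step to $f(0)$ is controlled by Lemma \ref{basic-bounds}.
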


Again, one is encouraged to compare these bounds against Example \ref{ex-1} or \eqref{fad}.  (In the latter case, the random variable $\deriv$ is equal to $\frac{-c_2}{n}$ with probability $1-\frac{m}{n-1}$, and is close to a randomly chosen zero $\lambda_j$ of $P$ otherwise.  In particular, we have $\mu, \sigma^2 = O(1/n)$ in this case.)  The reader may find it helpful to think of the special case in which the compact set $K$ lies on ``one side'' of the arc $\overline{D(0,1)} \cap \partial D(1,1)$, for instance in the lune $D(0,1) \backslash D(1,1)$ or in the complementary set $D(0,1) \cap D(1,1)$, although the estimates here also happen to be valid in ``disconnected'' cases in which the compact set $K$has components on either side of the arc.

\begin{proof}  Note that we may assume that $n$ is sufficiently large depending on $K$, since the claim is trivial otherwise.
Assuming this, we see from \eqref{xi-mom} and Proposition \ref{conv} we see that $|z-\deriv|, |z-\E \deriv| \sim 1$ for all $z \in K$, so in particular the function $\frac{f''}{f'}$ is holomorphic on $K$.  Also we have the Taylor expansion
$$ \frac{1}{z-\deriv} = \frac{1}{z-\mu} + \frac{\deriv-\mu}{(z-\mu)^2} + O( |\deriv-\mu|^2 )$$
surely and uniformly for $z \in K$, which on taking expectations gives the approximation 
$$ s_\deriv(z) = \frac{1}{z-\mu} + O( \sigma^2 )
$$
uniformly for $z \in K$.  Applying Lemma \ref{basic}(vi), we conclude that for any contour $\gamma: [0,1] \to K$ of length $|\gamma|$, that
\begin{equation}\label{gammas}
 f'(\gamma(1)) = e^{O( n |\gamma| \sigma^2 )} \frac{(\gamma(1) - \mu)^{n-1}}{(\gamma(0) - \mu)^{n-1}} f'(\gamma(0)).
\end{equation}

The set $\C \backslash S$ is open and path-connected. By compactness one can easily verify that for any fixed compact set $K$ of $\C \backslash S$, there exists a larger fixed compact subset $K'$ of $\C \backslash S$ such that any pair of points $z,w$ in $K$ are connected by a contour $\gamma$ in $K'$ of length $O(|z-w|)$.  Applying \eqref{gammas} (with $K$ replaced by $K'$) we conclude that
$$ f'(w) = e^{O\left( n |z-w| \sigma^2  \right)} \frac{(w - \mu)^{n-1}}{(z - \mu)^{n-1}} f'(z).
$$
for $w,z \in K$.  From the fundamental theorem of calculus we have the estimate $e^z = 1 + O( |z| e^{|z|} )$ uniformly for $z \in \C$, thus
$$ f'(w) = \left(1 + O\left( n |z-w| \sigma^2 e^{  O( n |z-w| \sigma^2 ) } \right)\right) \frac{(w - \mu)^{n-1}}{(z - \mu)^{n-1}} f'(z).
$$
Applying \eqref{xi-mom} we obtain part (i).  Integrating, we see that 
\begin{equation}\label{silence}
\begin{split}
&f(\gamma(0)) = f(\gamma(1)) - \frac{f'(\gamma(0))}{(\gamma(0) - \mu)^{n-1}} \times \\
&\quad \left(  \int_\gamma (w - \mu)^{n-1}\ dw + O\left( n \sigma^2 \int_\gamma |\gamma(0)-w| e^{o(n |\gamma(0)-w|)} |w-\mu|^{n-1} \ |dw| \right)\right)
\end{split}
\end{equation}
for any contour $\gamma: [0,1] \to K$ in a fixed compact subset $K$ of $\C \backslash S$.

Now let $K$ a fixed compact subset of $\C \backslash S$, and let $\eps > 0$ be fixed.  For the purpose of proving (ii) we may assume that $\eps$ is sufficiently small depending on $K$.  Observe that for every $z \in K$, there exists a smooth contour $\gamma: [0,1] \in \C \backslash S$ with $\gamma(0) = z$, $|\gamma(1)| = \eps$, which is always moving towards the origin in the sense that one has the estimates
\begin{equation}\label{gamm}
 \gamma'(t) = O(1); \quad - z \cdot \gamma'(t) \gtrsim 1
\end{equation}
for all $0 \leq t \leq 1$, where we use the inner product $z \cdot w \coloneqq \Re z \overline{w}$, and the implied constants here are allowed to depend on $\eps$.  This is geometrically obvious if we allow $\gamma$ to lie in the complement $\C \backslash (\overline{D(0,1)} \cap \partial D(1,1))$; when one adds in the set $T$ there are a finite number of points that the previously constructed curve $\gamma$ may pass through, but one can easily avoid those points by a slight perturbation of this curve. In fact one can take $\gamma$ to be a circular arc if desired: see Figure \ref{fig:curve}.

\begin{figure} [t]
\centering
\includegraphics[width=4in]{./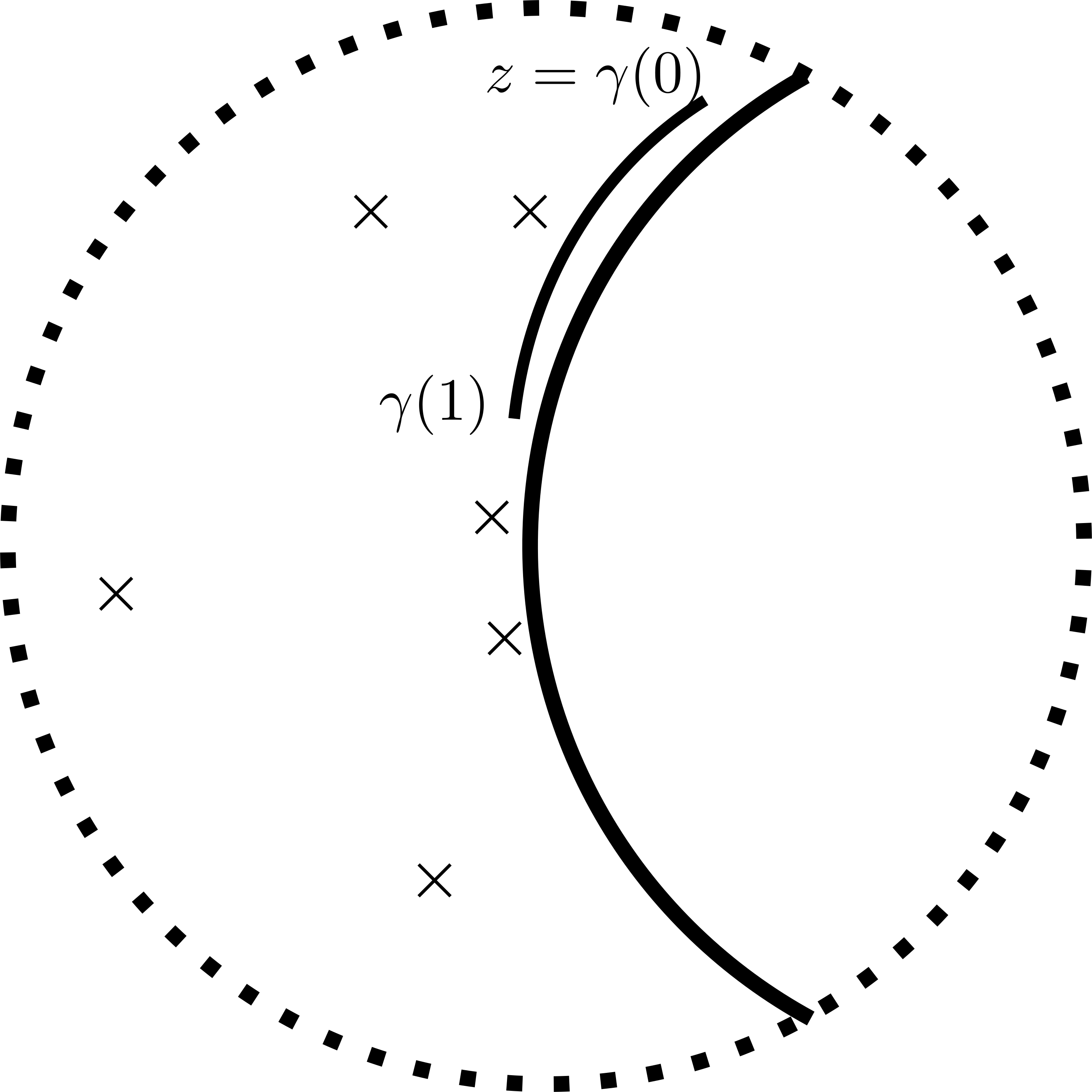}
\caption{In this picture the dashed circle is $\partial D(0,1)$, and the compact set $S$ consists of the circular arc $\overline{D(0,1)} \cap \partial D(1,1)$ and the six points $T$ marked with crosses.  Given any fixed point $z$ in the complement of $S$ and given any fixed $\eps$ with $0 < \eps < |z|$, one can find a contour $\gamma$ avoiding $S$ that starts at $z$ and ends at a point of magnitude $\eps$, which is always moving towards the origin in the sense of \eqref{gamm}. In the absence of the points $T$, one can easily achieve this by choosing $\gamma$ to be an arc on a suitable circle connecting $z$ and the origin (in most cases one can take the circle whose tangent at the origin is vertical); if this arc happens to pass through some of the points of $T$, one can easily perturb the arc to avoid them since these elements are isolated in $S$.}
\label{fig:curve}
\end{figure}

From the compactness of $K$ one can ensure that the bounds \eqref{gamm} hold uniformly for all $z \in K$, and that $\gamma$ ranges in a fixed compact set $K'$ of $\C \backslash S$.  From the fundamental theorem of calculus and \eqref{xi-mom} we have
$$ \int_\gamma (w - \mu)^{n-1}\ dw = \frac{(z - \mu)^n}{n}  - O\left( (\eps+o(1))^n \right)$$
and hence by \eqref{silence} and Lemma \ref{basic-bounds} we have
\begin{align*}
 f(z) &= f(0) + O\left((\eps+o(1))^n\right) + \frac{1}{n} f'(z) (z - \mu) \\
&\quad + O\left( n |f'(z)| \sigma^2 \int_\gamma |z-w| \left(\frac{|w-\mu|}{|z - \mu|}\right)^{n-1} e^{o(n |z-w|)}\ |dw| \right).
\end{align*}
From \eqref{gamm} and the fundamental theorem of calculus, if $w = \gamma(t)$ lies on the curve $\gamma$, then $|z-w| \sim t$ and the inner product of $z-w$ with $z$ is $\sim t$, hence also (by \eqref{xi-mom})
$$ |w-\mu| \leq |z-\mu| - ct$$
for some fixed $c>0$, thus also
$$ \left(\frac{|w-\mu|}{|z - \mu|}\right)^{n-1} \lesssim e^{-cnt}.$$
We thus have
\begin{align*}
 f(z) &= f(0) + O\left((\eps+o(1)\right)^n + \frac{f'(z) (z - \mu)}{n} \\
&\quad + O\left( n |f'(z)| \sigma^2 \int_0^1 t e^{-(c-o(1))nt}\ dt \right)
\end{align*}
Estimating $c-o(1)$ from below by $c/2$ (for $n$ large enough) and evaluating the integral, we see that
$$ \int_0^1 t e^{-(c-o(1))nt}\ dt \lesssim n^{-2}$$
and thus we obtain the desired claim (ii) (note that $|z-\mu| \sim 1$ for $z \in K$ thanks to \eqref{xi-mom} and the fact that $0 \not \in K$).
\end{proof}

This gives us reasonably fine control on the location of the zeroes of $f$ away from $S$:

\begin{corollary}[Zeroes of $f$ away from $S$]
Let $K$ be a fixed compact subset of $\C \backslash S$, and let $\eps>0$ be fixed.  If $n$ is sufficiently large depending on $\eps,K$, then for any zero $\zero$ of $f$ inside $K$, one has
\begin{equation}\label{fine}
-U_\deriv(\zero) = \frac{1}{n} \log \frac{1}{|f(0)|} + O\left(\frac{\sigma^2}{n}\right) + O\left( (\eps+o(1))^n \right).
\end{equation}
\end{corollary}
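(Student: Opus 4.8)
The strategy is to apply Proposition \ref{f-approx}(ii) at the point $z = \zero$ itself. Since $\zero$ is a zero of $f$, the term $f(z)$ on the left-hand side vanishes, so the approximation becomes an (approximate) identity which one can solve for $f'(\zero)$ in terms of $f(0)$, $\zero$ and $\mu$; passing to absolute values and logarithms then converts this into a relation between $-U_\deriv(\zero)$ (recognised via Lemma \ref{basic}(iii), which gives $-U_\deriv(\zero) = \frac{1}{n-1}\log\frac{|f'(\zero)|}{n}$) and $\frac{1}{n}\log\frac{1}{|f(0)|}$.

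The auxiliary facts one needs are all at hand. Since $0 \in \overline{D(0,1)} \cap \partial D(1,1) \subset S$ (see \eqref{S-def}) and $K$ is a fixed compact subset of $\C \backslash S$, we have $\mathrm{dist}(0,K) \gtrsim 1$; together with $\mu, \sigma = o(1)$ from \eqref{xi-mom} this gives $|\zero - \mu| \sim 1$ and $1 + O(\sigma^2) \sim 1$ for all $\zero \in K$ once $n$ is large. Most importantly, \eqref{ffd} gives $|f(0)| \sim 1$, so $f(0)$ is bounded away from $0$. We take $n$ as large as needed in terms of $\eps$ and $K$, and (shrinking $\eps$ if necessary, which only strengthens the claim) assume $\eps$ is small enough for Proposition \ref{f-approx}(ii) to apply on $K$.

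Now fix a zero $\zero \in K$ of $f$. Applying Proposition \ref{f-approx}(ii) at $z = \zero$ with $f(\zero) = 0$ and solving, one finds that $\frac{|f'(\zero)|}{n}$ equals $|f(0)|$ up to the bounded factor $|\zero - \mu| \sim 1$ and multiplicative errors $1 + O(\sigma^2) + O((\eps+o(1))^n)$. Taking logarithms, then dividing by $n-1$ and invoking Lemma \ref{basic}(iii), and finally collecting the error terms, yields \eqref{fine}. The main obstacle is precisely this error bookkeeping: one must check that dividing through by $|\zero - \mu|(1 + O(\sigma^2)) \sim 1$ and then passing to logarithms does not amplify the two errors beyond $O(\sigma^2)$ and $O((\eps + o(1))^n)$ respectively. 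The lower bound $|f(0)| \gtrsim 1$ from \eqref{ffd} is exactly what makes this work, and is the reason one can afford the crude superexponential error term supplied by Proposition \ref{f-approx}(ii); the remaining steps, including absorbing the $O(1/n^2)$ coming from the discrepancy between $\frac{1}{n-1}$ and $\frac{1}{n}$, are routine.
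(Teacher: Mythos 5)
There is a genuine gap. Following your outline, applying Proposition \ref{f-approx}(ii) at $z = \zero$ and using $|f(0)| \sim 1$ does give
\[
\frac{|f'(\zero)|}{n}\,|\zero - \mu| = |f(0)|\bigl(1 + O(\sigma^2) + O((\eps+o(1))^n)\bigr),
\]
and taking logarithms and invoking Lemma \ref{basic}(iii) turns $\log\frac{|f'(\zero)|}{n}$ into $-(n-1)U_\deriv(\zero)$. But after dividing you are left with a term $\tfrac{1}{n-1}\log|\zero-\mu|$ (or $\tfrac{1}{n}\log|\zero-\mu|$, depending on whether you divide by $n-1$ or $n$). You treat this as harmless because $|\zero - \mu| \sim 1$, but ``$\sim 1$'' only gives $\log|\zero - \mu| = O(1)$, so this term is of size $O(1/n)$, which is \emph{not} absorbable into the required error $O(\sigma^2/n)$, since $\sigma^2 = o(1)$. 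Your remark that one only needs to check that ``passing to logarithms does not amplify the two errors'' misdiagnoses the difficulty: $\log|\zero-\mu|$ is a bona fide main term, not an error.

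The missing idea, which is the crux of the paper's argument, is a second Taylor expansion of $\log|\zero - \deriv|$ about $\deriv = \mu$, whose linear term vanishes in expectation because $\E(\deriv - \mu) = 0$; this yields the identity $-U_\deriv(\zero) = \log|\zero - \mu| + O(\sigma^2)$. Substituting this for $\log|\zero-\mu|$ lets the resulting $-\tfrac{1}{n}U_\deriv(\zero)$ (or $+\tfrac{1}{n-1}U_\deriv(\zero)$ on the other side) recombine with $-\tfrac{n-1}{n}U_\deriv(\zero)$ to produce the full $-U_\deriv(\zero)$, and this recombination simultaneously disposes of the $\tfrac{1}{n-1}$ vs.\ $\tfrac{1}{n}$ discrepancy that you wave off as $O(1/n^2)$ (that, too, is not $O(\sigma^2/n)$ in general). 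Without this step the derivation does not close. The remaining ingredients you cite ($|f(0)| \sim 1$ from \eqref{ffd}, $|\zero - \mu| \sim 1$ from Proposition \ref{conv} and \eqref{xi-mom}, the use of Proposition \ref{f-approx}(ii)) are indeed the right ones and match the paper's route, so the fix would be a local insertion rather than a restructuring.
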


The reader is encouraged to compare this equation to \eqref{ten}. With more effort one can perform a further asymptotic expansion of the $O(\sigma^2/n)$ error, but we will not need to do so here (in fact our analysis only requires the weaker bound of $o(\sigma^2)$ on this error).  Informally, this corollary asserts that the zeroes of $f$ lie very close to a level set of the logarithmic potential $U_\deriv$.

\begin{proof}  Applying \eqref{f-approx} one has
$$ 0 = f(0) + \frac{1 + O(\sigma^2)}{n} f'(\zero) (\zero - \mu) + O\left( (\eps+o(1))^n \right)$$
and thus by \eqref{ffd} and the triangle inequality
$$ \frac{|f'(\zero)|}{n} |\zero - \mu| = |f(0)| \left( 1 + O(\sigma^2) + O\left( (\eps+o(1))^n \right) \right).$$
Taking logarithms using Lemma \ref{basic}(iii), and dividing by $n$, we conclude that
\begin{equation}\label{non}
 - \frac{n-1}{n} U_\deriv(\zero) + \frac{1}{n} \log |\zero - \mu| = \frac{1}{n} \log |f(0)| + O\left(\frac{\sigma^2}{n}\right) + O\left( (\eps+o(1))^n \right).
\end{equation}
From Proposition \ref{conv} and \eqref{xi-mom} we have $|\zero - \deriv|, |\zero - \mu| \sim 1$ surely if $n$ is large enough.
From Taylor expansion we then have
$$ \log |\zero - \deriv| = \log |\zero - \mu| - \Re \frac{\deriv - \mu}{\zero - \mu} + O(\sigma^2)$$
and thus on taking expectations
$$ - U_\deriv(\zero) = \log |\zero - \mu| + O( \sigma^2 ).$$
If we use this to replace the $\log |\zero - \mu|$ term in \eqref{non} with $-U_\deriv(\zero) - O( \sigma^2 )$, we obtain the claim.
\end{proof}

The zero $a$ of $f$ can certainly be enclosed in a fixed compact subset $K$ of $\C \backslash S$ for $n$ large enough (e.g., in $\overline{D(1,0.1)}$), hence in particular we have from the above corollary
$$ -U_\deriv(a) = \frac{1}{n} \log \frac{1}{|f(0)|} + O\left(\frac{\sigma^2}{n}\right) + O\left( (\eps+o(1))^n \right).$$
By the triangle inequality, we conclude that
\begin{equation}\label{exi}
 U_\deriv(\zero) = U_\deriv(a) + O\left(\frac{\sigma^2}{n}\right) + O\left( (\eps+o(1))^n \right)
\end{equation}
for any zero $\zero$ of $f$ in a fixed compact subset $K$ of $\C \backslash S$, if $n$ is large enough depending on $K,\eps$.  We can combine this estimate with the requirement $\zero \in \overline{D(0,1)}$ to get further information on the zeroes $\deriv$ of $f'$:

\begin{proposition}[Fine control on $\deriv$]\label{xi-fine} Let $\eps>0$ be fixed.
\begin{itemize}
\item[(i)]  We have 
\begin{equation}\label{meao}
\mu \lesssim \sigma^2 + (\eps+o(1))^n
\end{equation}
(cf. \cite[Proposition 6(1)]{miller}) and
\begin{equation}\label{1a}
1-a \lesssim \sigma^2 + (\eps+o(1))^n.
\end{equation}
\item[(ii)]  If $I$ is a compact subset of $\partial D(0,1) \backslash S$, then
$$ U_\deriv(a) - U_\deriv(e^{i\theta}) \geq - o(\sigma^2) - o(1)^n$$
for all $e^{i\theta} \in I$, if $n$ is sufficiently large depending on $\eps, I$.  (Compare with \eqref{dd}.)
\item[(iii)]  For any fixed smooth function $\varphi \colon \partial D(0,1) \to \C$, we have
\begin{equation}\label{bbar}
 \int_0^{2\pi} \varphi(e^{i\theta}) U_\deriv(e^{i\theta})\ \frac{d\theta}{2\pi}
= o(\sigma^2) + o(1)^n 
\end{equation}
and also
\begin{equation}\label{abar}
 U_\deriv(a) = o(\sigma^2) + o(1)^n.
\end{equation}
\end{itemize}
\end{proposition}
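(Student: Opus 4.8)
The plan is to deduce all three parts from the single fact (established in \eqref{exi}) that the zeros of $f$ must lie very close to a level set of the logarithmic potential $U_\deriv$, together with the hard constraint that every zero lies in $\overline{D(0,1)}$. The order I would follow is: (ii) first (it is the most direct), then the easy half of \eqref{abar}, then the quantitative bounds \eqref{meao}, \eqref{1a} and \eqref{bbar} via a ``master'' trigonometric inequality.

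\textbf{Step 1: the radial comparison, and part (ii).} First I would record that in this regime \emph{every} zero of $f$ crowds uniformly onto $\partial D(0,1)$: by \eqref{ffd} we have $|f(0)|\sim 1$, while Lemma \ref{basic-bounds} makes $f'$ super-exponentially small on $\overline{D(0,r)}$ for every fixed $r<1$, so integrating along a radius shows $f$ has no zero in $\overline{D(0,r)}$ once $n$ is large; hence $1-|\zero|=o(1)$ surely, and \eqref{zetas} bounds the total radial defect $n\,\E\log\frac1{|\zero|}=O(1)$. Since the zeros equidistribute on $\partial D(0,1)$ by Theorem \ref{limit-thm}(ii), near any prescribed point $e^{i\theta_0}$ of $\partial D(0,1)\setminus S$ there is, for $n$ large, a zero $\zero=\rho e^{i\theta}$ of $f$ in a fixed compact subset of $\C\setminus S$ with $\theta$ arbitrarily close to $\theta_0$ and $1-\rho=O(1/n)$ (most zeros near $e^{i\theta_0}$ satisfy this, by \eqref{zetas}). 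Because $\deriv$ lies within $\eps=o(1)$ of $S$ (Proposition \ref{conv}) while this compact set stays a fixed distance from $S$, the one-term radial Taylor expansion gives
\[
U_\deriv(\rho e^{i\theta})-U_\deriv(e^{i\theta})=(1-\rho)\,\E\,\Re\frac{e^{i\theta}}{e^{i\theta}-\deriv}+O((1-\rho)^2)=(1-\rho)\big(1+O(\sigma^2)\big)+O((1-\rho)^2)\ \ge\ 0
\]
since $\rho\le 1$. Feeding this and the zero $\zero$ into \eqref{exi} yields $U_\deriv(a)-U_\deriv(e^{i\theta})\ge U_\deriv(a)-U_\deriv(\zero)\ge -O(\sigma^2/n)-O((\eps+o(1))^n)$; as $U_\deriv$ is harmonic, hence continuous, on $\partial D(0,1)\setminus S$, letting $\theta\to\theta_0$ and covering $I$ by finitely many such arcs proves part (ii).

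\textbf{Step 2: the equation \eqref{abar}.} Since $\deriv$ avoids $\overline{D(a,1)}$ we have $|a-\deriv|>1$ surely, so $U_\deriv(a)=\E\log\frac1{|a-\deriv|}\le 0$ for free. For the matching lower bound I would integrate part (ii) over the circle: $\theta\mapsto U_\deriv(e^{i\theta})$ lies in $L^1(\partial D(0,1))$ with $\int_0^{2\pi}U_\deriv(e^{i\theta})\,\frac{d\theta}{2\pi}=0$ exactly (Jensen's formula / the mean value property, using $|\deriv|\le 1$); choosing the compact $I$ to exhaust $\partial D(0,1)\setminus S$ — whose complement in $\partial D(0,1)$ is at most countable, hence null — and using absolute continuity of the integral to discard the omitted arcs gives $0=\int_0^{2\pi}U_\deriv(e^{i\theta})\,\frac{d\theta}{2\pi}\le U_\deriv(a)+o(\sigma^2)+o(1)^n$, which is \eqref{abar}.

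\textbf{Step 3: the master inequality, and the quantitative bounds.} Because $\deriv$ concentrates near the origin, I would Taylor-expand $U_\deriv$ about $0$ using \eqref{Taylor}: on any fixed compact $I\subset\partial D(0,1)\setminus S$,
\[
U_\deriv(e^{i\theta})=\Re(\mu e^{-i\theta})+\tfrac12\Re(\E\deriv^2e^{-2i\theta})+O(\sigma^2)+o(1)^n,
\]
the error absorbing the cubic-and-higher moments of $\deriv$ and the contribution of the $O(1)$ ``macroscopic'' zeros of $f'$ (each such zero at distance $\gtrsim\delta$ from $0$ contributes $\gtrsim\delta^2/n$ to $\sigma^2$, so their total probability is $O(\sigma^2)$), and similarly $U_\deriv(a)=\Re\mu+\tfrac12\Re\E\deriv^2-(1-a)+O(\sigma^2)+o(1)^n$. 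Subtracting, using part (ii), and extending by continuity gives, for all $\theta$,
\[
(1-a)+\Re\big(\mu(1-e^{-i\theta})\big)+\tfrac12\Re\big(\E\deriv^2(1-e^{-2i\theta})\big)\ \ge\ -o(\sigma^2)-o(1)^n,
\]
i.e.\ a degree-$2$ trigonometric polynomial with constant term $c_0=(1-a)+\Re\mu+\tfrac12\Re\E\deriv^2$, first Fourier coefficient $-\mu/2$ and second $-\tfrac14\E\deriv^2$, bounded below by $-o(\sigma^2)-o(1)^n$. Since a degree-$2$ trigonometric polynomial that is $\ge-\delta$ has its first two Fourier coefficients $\lesssim c_0+\delta$, this gives $|\mu|,|\E\deriv^2|\lesssim c_0+o(\sigma^2)+o(1)^n$ and $c_0\ge-o(\sigma^2)-o(1)^n$; on the other hand the expansion above and $U_\deriv(a)\le 0$ give $c_0\le U_\deriv(a)+O(\sigma^2)+o(1)^n\le O(\sigma^2)+o(1)^n$. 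Combining these with $|\E\deriv^2|\le\E|\deriv|^2=|\mu|^2+\sigma^2$ and $1-a\le c_0+|\mu|+\tfrac12|\E\deriv^2|$, and absorbing the negligible $|\mu|^2$ and $o(1)|\mu|$ terms, yields \eqref{meao} and \eqref{1a}. Finally, for \eqref{bbar} I would pair $\varphi$ against $U_\deriv|_{\partial D(0,1)}$: by \eqref{Taylor} the $m$-th Fourier coefficient of $\theta\mapsto U_\deriv(e^{i\theta})$ equals $\tfrac{\E\deriv^{|m|}}{2|m|}$, which is $\lesssim|\mu|$ for $|m|=1$ and $\lesssim\E|\deriv|^2\lesssim\sigma^2$ for $|m|\ge 2$, so that $\int_0^{2\pi}\varphi(e^{i\theta})U_\deriv(e^{i\theta})\,\frac{d\theta}{2\pi}\lesssim_\varphi|\mu|+\sigma^2$; re-feeding \eqref{meao} into sharper versions of the error terms in the master inequality (a short bootstrap) upgrades $|\mu|$ to $o(\sigma^2)+o(1)^n$ and gives \eqref{bbar}.

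\textbf{Expected main obstacle.} The real work is the bookkeeping: every error entering the master inequality must be genuinely $o(\sigma^2)+o(1)^n$, not merely $O(\sigma^2)$ or $O(1/n)$. The delicate points are (a) showing that the cubic-and-higher moments $\E\deriv^k$ and the contribution of the bounded number of zeros of $f'$ at macroscopic distance from the origin are absorbable at this precision — which rests on combining Proposition \ref{conv} with \eqref{zetas} to see that those zeros carry probability comparable to $\sigma^2$ — and (b) the concluding bootstrap that turns the $O(\sigma^2)$-bound on $\mu$ from the crude Fourier extraction into the $o(\sigma^2)$-bound needed for \eqref{bbar}, by reinserting \eqref{meao} into the expansion of $U_\deriv$ near the unit circle.
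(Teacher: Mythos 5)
Your overall architecture (use \eqref{exi} to compare $U_\deriv$ at zeroes of $f$ with $U_\deriv(a)$, exploit $|\zero|\leq 1$, and then Fourier--analyze) matches the paper's, and the ``Fej\'er'' route to (i) via nonnegativity of a degree-two trigonometric polynomial is a pleasant alternative to the paper's choice of evaluating at specific angles.  However, the logical order you propose (prove (ii) first, without (i)) hinges on a step that does not close.  In Step~1 you compare $U_\deriv(\zero)$ to $U_\deriv(\zero/|\zero|)=U_\deriv(e^{i\theta})$ by a radial Taylor expansion, which indeed gives nonnegativity; but the point $e^{i\theta_0}\in I$ you actually want to control is not a radial projection of a zero of $f$, so you must bridge $U_\deriv(e^{i\theta})$ to $U_\deriv(e^{i\theta_0})$.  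You invoke ``continuity'' of $U_\deriv$, but $U_\deriv$ is $n$-dependent with a gradient $|s_\deriv|\sim 1$ on $I$, so the bridging error is $O(|\theta-\theta_0|)$, and equidistribution (Theorem~\ref{limit-thm}(ii)) only provides $|\theta-\theta_0|=o(1)$, not $o(\sigma^2)$.  Thus the conclusion of your Step~1 is $U_\deriv(a)-U_\deriv(e^{i\theta_0})\geq -o(1)$, which is far too weak for (iii).  The paper avoids this entirely by \emph{not} decomposing in polar coordinates: it writes $\zero=e^{i\theta_0}(1-\eta)$ with $\eta$ complex, compares $U_\deriv$ to the pure logarithmic potential $U_0$ along the straight segment from $e^{i\theta_0}$ to $\zero$, and gets nonnegativity from $\log\frac{1}{|1-\eta|}\geq 0$ since $|1-\eta|=|\zero|\leq 1$ --- a direct comparison between $U_\deriv(\zero)$ and $U_\deriv(e^{i\theta_0})$ with no continuity step.  (The price of that trick is that the cross term $O(\eta\mu)$ must be made $o(\sigma^2)$ via \eqref{meao}, which is why the paper proves (i) first; your ordering was designed to avoid that dependency, but it cannot.)

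Two secondary gaps are in the same spirit of ``right idea, wrong precision.''  In Step~2, ``absolute continuity of the integral'' shows that the contribution of the excluded arcs tends to zero as $\delta\to 0$ for each fixed $n$, but it gives no bound relative to $\sigma^2$; the paper instead interpolates between Cauchy--Schwarz (giving $O(\delta^{1/2})$) and the pointwise Taylor bound $O(\delta|\deriv|^2)$ for $|\deriv|$ small, yielding $O(\delta^{1/2}\E|\deriv|^2)=O(\delta^{1/2}\sigma^2)+o(\sigma^2)+o(1)^n$, which becomes $o(\sigma^2)$ after sending $\delta$ to zero slowly.  In Step~3 your Fourier-coefficient route to \eqref{bbar} produces $\lesssim_\varphi |\mu|+\sigma^2$, which after \eqref{meao} is $O(\sigma^2)+o(1)^n$, still short of the required $o(\sigma^2)+o(1)^n$; the ``short bootstrap'' you gesture at is not spelled out and is not clearly available at this stage (the final bound $\mu=o(1)^n$ comes only at the end of Section~\ref{circle-sec}).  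The paper sidesteps this by deducing \eqref{bbar} from the integrated form of (ii) together with \eqref{abar}, so that the linear term contributes only through the excluded arcs and picks up a factor of $\delta=o(1)$.  Minor: your expansion of $U_\deriv(a)$ should read $U_\deriv(a)=(1-a)+\Re\mu+\tfrac12\Re\E\deriv^2+\cdots$ (the $(1-a)$ enters with a plus sign, since $\partial_a U_\deriv(a)\approx -1$ near $a=1$), though the sign is corrected in your stated master inequality.
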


\begin{proof}  We first prove (i).  Suppose that $K$ is a fixed compact subset of $\C \backslash S$.
If $\zero$ is a zero of $f$ in $K$, and $n$ is sufficiently large depending on $\eps,K$, then we can perform a Taylor expansion
$$ \log \frac{1}{|\zero - \deriv|} = \log \frac{1}{|\zero|} + \Re \frac{\deriv}{\zero} + O( |\deriv|^2 )$$
and
$$ \log \frac{1}{|a - \deriv|} = \log \frac{1}{|a|} + \Re \frac{\deriv}{a} + O( |\deriv|^2 )$$
and thus on taking expectations in $\deriv$ and applying \eqref{exi}, \eqref{var-ident} we obtain the crude estimate
$$ \log \frac{1}{|\zero|} + \Re \left(\left(\frac{1}{\zero} - \frac{1}{a}\right)\mu\right) = \log\frac{1}{|a|} +  O( |\mu|^2 + \sigma^2 ) + O\left( (\eps+o(1))^n \right).$$
Since the zero $\zero$ must lie in $\overline{D(0,1)}$, we conclude in particular that
\begin{equation}\label{xa}
 \Re\left(\left(\frac{1}{a} - \frac{1}{\zero}\right) \mu \right) \geq \log|a| +  O( |\mu|^2 + \sigma^2 ) + O\left( (\eps+o(1))^n \right).
\end{equation}

If $e^{i\theta}$ is an fixed element of $\partial D(0,1)$ that avoids $S$, then by Theorem \ref{limit-thm}(ii) there exists a zero $\zero$ of $f$ that lies within $o(1)$ of $e^{i\theta}$.  Applying \eqref{xa}, we conclude that
\begin{equation}\label{xb}
 \Re\left( \left(1 - e^{-i\theta} + o(1)\right) \mu \right) \geq \log|a| +  O( |\mu|^2 + \sigma^2 ) + O\left( (\eps+o(1))^n \right)
\end{equation}
if $n$ is sufficiently large depending on $\eps,\theta$.

Let $\theta_+ \in [0.98 \pi, 0.99 \pi]$, $\theta_- \in [1.01 \pi, 1.02 \pi]$ be fixed numbers such that $e^{i\theta_+}, e^{i\theta_-} \not \in S$.  For $n$ large enough depending on $\theta_+, \theta_-$, there will be a convex combination of the quantities $1 - e^{-i\theta_+} + o(1)$, $1 - e^{-i\theta_-} + o(1)$ appearing in \eqref{xb} that lies in the real interval $[1.9, 2]$.    We conclude that
$$ \Re \mu \geq \frac{\log|a|}{1.9} +  O( |\mu|^2 + \sigma^2 ) + O\left( (\eps+o(1))^n \right);$$
since $\log |a| = - (1-a) (1+o(1))$, we conclude that
\begin{equation}\label{rexi}
 \Re \mu \geq - \frac{1-a}{1.9 + o(1)} +  O( |\mu|^2 + \sigma^2 ) + O( (\eps+o(1))^n ).
\end{equation}
On the other hand, we have $|\deriv - a| \geq 1$ surely, hence on squaring
$$ |\deriv|^2 - 2 a \Re \deriv + a^2 \geq 1$$
hence on taking expectations and rearranging using \eqref{var-ident} and the fact that $\frac{1-a^2}{2a} = (1-a)(1+o(1))$, we obtain
$$ \Re \mu \leq -(1-a)(1+o(1)) + O( |\mu|^2 + \sigma^2 ).$$
Comparing this with \eqref{rexi} we conclude 
$$ \Re \mu, 1-a \lesssim |\mu|^2 + \sigma^2 + (\eps+o(1))^n$$
for $n$ large enough, and hence for all $n$ (since the $o(1)$ term on the right-hand side can be arbitrarily large for bounded $n$).
Inserting these bounds back into \eqref{xb}, we now conclude that
$$ 
(\sin \theta + o(1)) \Im \mu \leq O( |\mu|^2 + \sigma^2 ) + O( (\eps+o(1))^n )$$
whenever $e^{i\theta}$ is a fixed element of $\partial D(0,1)$ that avoids $S$.  If one applies this bound for some $\theta$ close to $\pm \pi/2$ for either choice of sign $\pm$, we conclude that
$$ \pm \Im \mu \leq O( |\mu|^2 + \sigma^2 ) + O( (\eps+o(1))^n ).$$
Putting all this together, we conclude that
$$ \mu, 1-a \lesssim |\mu|^2 + \sigma^2 + (\eps+o(1))^n.$$
Since $|\mu|^2 = o( |\mu|)$, we can delete the $|\mu|^2$ term on the right-hand side.  This concludes the proof of (i).

Now we prove (ii). If $e^{i\theta} \in I$, then by Theorem \ref{limit-thm}(ii) there exists a zero $\zero$ of $f$ that lies within $o(1)$ of $e^{i\theta}$; it also lies inside the disk $\overline{D(0,1)}$, thus one may write $\zero = e^{i\theta} (1 - \eta)$ where $\eta=o(1)$ is such that $|1-\eta| \leq 1$.  From \eqref{exi} we have
\begin{equation}\label{back}
U_\deriv(a) - U_\deriv(e^{i\theta} - e^{i\theta} \eta) = o(\sigma^2) + O( (\eps+o(1))^n ).
\end{equation}
By the fundamental theorem of calculus one has
$$\log \frac{1}{|e^{i\theta} - e^{i\theta} \eta - \deriv|} - \log \frac{1}{|e^{i\theta}-\deriv|} = \log |1 - e^{-i\theta} \deriv| - \log |1 - \eta - e^{-i\theta} \deriv| = \Re \int_0^1 \frac{\eta dt}{1-\eta t -e^{-i\theta} \deriv}.$$
We can Taylor expand
$$ \frac{1}{1-\eta t -e^{-i\theta} \deriv} = \frac{1}{1-\eta t} + \frac{e^{-i\theta} \deriv}{(1- \eta t)^2} + O( |\deriv|^2 )$$
hence on taking expectations and simplifying using \eqref{var-ident}, \eqref{meao} and $\eta=o(1)$
$$  U_\deriv(e^{i\theta} - e^{i\theta} \eta) - U_\deriv(e^{i\theta}) = \Re \eta \int_0^1 \frac{dt}{1-\eta t} + o( \sigma^2 ) +
 O\left( (\eps+o(1))^n \right).$$
Replacing $\deriv$ by $0$, we also have
$$  U_0(e^{i\theta} - e^{i\theta} \eta) - U_0(e^{i\theta}) = \Re \int_0^1 \frac{\eta dt}{1-\eta t}.$$
Since $U_0(e^{i\theta})=0$ and $U_i(e^{i\theta} - e^{i\theta} \eta) = \log \frac{1}{|1-\eta|} \geq 0$, we thus have
$$  U_\deriv(e^{i\theta} - e^{i\theta} \eta) - U_\deriv(e^{i\theta}) \geq - o( \sigma^2 ) -
 O\left( (\eps+o(1))^n \right).$$
Combining this with \eqref{back}, we conclude that
$$ U_\deriv(a)-U_\deriv(e^{i\theta}) \geq  - o(\sigma^2) - (\eps+o(1))^n$$
and the claim (ii) follows from the underspill principle.

Now we prove (iii).  Suppose first that $\varphi$ is non-negative.  The fixed set $S$ only intersects $\partial D(0,1)$ in at most finitely many points.  Thus, for any fixed $0 < \delta \leq 1$, one can find a fixed compact subset $I$ of $\partial D(0,1) \backslash S$ whose complement in $\partial D(0,1)$ has measure at most $\delta$ with respect to the uniform probability measure $dm$ on $\partial D(0,1)$.  Integrating (ii) we see that if $n$ is sufficiently large depending on $\delta, \eps$ then
\begin{equation}\label{sin}
 \left(\int_I \varphi\ dm\right) U_\deriv(a) - \int_I  \varphi(e^{i\theta}) U_\deriv(e^{i\theta})\ dm(e^{i\theta}) \geq - o(\sigma^2) - o(1)^n.
\end{equation}
Now consider the expression
$$ \int_{\partial D(0,1) \backslash I}  \varphi(e^{i\theta}) \left(\log |e^{i\theta} - \deriv| + \Re(e^{-i\theta} \deriv)\right)\ dm(e^{i\theta}).$$
On the one hand, from Cauchy-Schwarz and the uniform square-integrability of the functions $e^{i\theta} \mapsto \log |e^{i\theta} - \deriv| + \Re (e^{-i\theta} \deriv)$ we can bound this quantity by $O(\delta^{1/2})$ (where the implied constant can depend on $\varphi$, but is independent of $\delta$).  On the other hand, if $|\deriv| \leq 1/2$ (say), we can perform a Taylor expansion
$$ \log |e^{i\theta} - \deriv| + \Re(e^{-i\theta} \deriv) = O( |\deriv|^2)$$
and we can bound this quantity instead by $O( \delta |\deriv|^2 )$.  Thus for all $\deriv \in \overline{D(0,1)}$ we can obtain a bound of $O(\delta^{1/2} |\deriv|^2)$.  Taking expectations and using \eqref{meao} to dispose of the $\Re e^{i\theta} \deriv$ contribution we conclude that
$$ \int_{\partial D(0,1) \backslash I} \varphi(e^{i\theta}) U_\deriv(e^{i\theta}) \ dm(e^{i\theta})
= O( \delta^{1/2} \sigma^2 ) + o(\sigma^2) + o(1)^n.$$
Also we certainly have
$$ \int_{\partial D(0,1) \backslash I} \varphi\ dm = O(\delta).$$
Combining these estimates with \eqref{sin}, \eqref{abar}, we conclude that
\begin{align*}
& \left(\int_{\partial D(0,1)} \varphi\ dm\right) U_\deriv(a) -  \int_{\partial D(0,1)}\varphi(e^{i\theta}) U_\deriv(e^{i\theta}) \ dm(e^{i\theta}) \\
&\quad \geq  - O\left( \delta^{1/2} \sigma^2 \right) - o\left(\sigma^2\right) - o(1)^n
\end{align*}
where the implied constant in the $O\left( \delta^{1/2} \sigma^2 \right)$ term is independent of $\delta$.  Sending $\delta$ sufficiently slowly to zero, we conclude that
\begin{equation}\label{bing}
\int_{\partial D(0,1)}\varphi(e^{i\theta}) (U_\deriv(a) - U_\deriv(e^{i\theta})) \ dm(e^{i\theta}) \geq - o(\sigma^2) - o(1)^n. 
\end{equation}
If we apply \eqref{bing} with $\varphi$ identically equal to one, we see from Jensen's formula (or \eqref{Taylor}) that the contribution of the $U_\deriv(e^{i\theta})$ term vanishes, giving 
$$ - U_\deriv(a) \leq o(\sigma^2) + o(1)^n.$$
From Lemma \ref{basic}(i) the left-hand side is positive, and we obtain \eqref{abar}.  (Compare this with the argument used to rule out \eqref{lamin}.)  From \eqref{bing} we now have
$$
 \int_{\partial D(0,1)} \varphi(e^{i\theta}) U_\deriv(e^{i\theta})\ dm(e^{i\theta}) \leq o(\sigma^2)  + o(1)^n 
$$
whenever $\varphi\colon \partial D(0,1) \to \C$ is fixed smooth and non-negative.  Since one can add an arbitrary constant to $\varphi$ without affecting the left-hand side by Jensen's formula (or \eqref{Taylor}), this inequality in fact holds for all fixed smooth real $\varphi$; replacing $\varphi$ with $-\varphi$ if necessary we now obtain \eqref{bbar} for real $\varphi$, and the complex case now follows by the triangle inequality.
\end{proof}

We can now finish the proof of Theorem \ref{contra-circle}, following the ``second Fourier coefficient'' strategy used to rule out \eqref{dd}.  From Taylor expansion and Fourier inversion one has
$$
 \int_0^{2\pi} e^{-2i\theta} \log \frac{1}{|e^{i\theta} - \deriv|}  \ \frac{d\theta}{2\pi}= \frac{1}{4} \deriv^2$$
for any $\deriv \in \overline{D(0,1)}$ (one can work first with $\deriv \in D(0,1)$ and then take limits if desired).  Applying Proposition \ref{xi-fine}(iii) and the Fubini--Tonelli theorem, we conclude that
$$ \E \deriv^2 = o(\sigma^2) + o(1)^n$$
and in particular
\begin{equation}\label{rexi2}
 \E \Re(\deriv^2) = o(\sigma^2) + o(1)^n
\end{equation}
To derive a contradiction from this,  observe that as $1 \leq |a-\deriv| \leq 2$, we surely have
$$ |a-\deriv| - 1 \sim \log |a-\deriv|$$
and hence by the triangle inequality
$$ |1-\deriv| = 1 + O( (1-a) + \log |a-\deriv| ).$$
Thus $\deriv$ lies within $O((1-a) + \log |a-\deriv|)$ of an element $\tilde \deriv$ of the arc $\overline{D(0,1)} \cap D(1,1)$.  In particular, using once again the crucial observation from Figure \ref{fig:key}, the argument of $\tilde \deriv$ lies in either $[\pi/3,\pi/2]$ or $[-\pi/2, -\pi/3]$, so that 
$$ \Re(\tilde \deriv^2) \leq - \frac{1}{2} |\tilde \deriv|^2.$$
Writing $\tilde \deriv$ in terms of $\deriv$ and an error of size $O((1-a) + \log |a-\deriv|)$, we conclude that
$$ \Re(\deriv^2) \leq - \frac{1}{2} |\deriv|^2 + O\left( |\deriv| \left((1-a) + \log |a-\deriv|\right) \right) + O\left( \left((1-a) + \log |a-\deriv|\right)^2 \right).$$
Using the Young inequality $|zw| \leq \frac{1}{2} |z|^2 + \frac{1}{2} |w|^2$ for various choices of $z,w$ to absorb some error terms into other terms, we conclude that
$$ \Re(\deriv^2) \leq - \frac{1}{4} |\deriv|^2 + O( (1-a)^2 + \log^2 |a-\deriv| )$$
(say). Thus on taking expectations
$$ \E \Re(\deriv^2) \leq - \frac{1}{4} \E |\deriv|^2 + O( (1-a)^2 + \E \log^2 |a-\deriv| ).$$

From \eqref{1a} and the bound $1-a=o(1)$ we have
$$ (1-a)^2 \leq o( \sigma^2 ) + (\eps+o(1))^n$$
for any fixed $\eps>0$, hence by the underspill principle
\begin{equation}\label{fint}
 (1-a)^2 \leq o( \sigma^2 ) + o(1)^n.
\end{equation}
Similarly, from \eqref{abar} and the bound $0 \leq \log |a-\deriv| \lesssim 1$ we have
$$ \E \log^2 |a-\deriv| \leq o( \sigma^2 ) + o(1)^n.$$
Using \eqref{var-ident}, we conclude that
$$ \E \Re(\deriv^2) \leq - \frac{1+o(1)}{4} \E |\deriv|^2 + o(1)^n$$
which when combined with \eqref{rexi2}, \eqref{var-ident} gives
\begin{equation}\label{xin}
 |\mu|^2 + \sigma^2 = \E |\deriv|^2 = o(1)^n.
\end{equation}
Applying \eqref{fint} we conclude that
$$ a = 1 - o(1)^n.$$
But this contradicts \eqref{hypo}, and thus concludes the proof of Theorem \ref{contra-circle}.

\begin{remark}\label{miller-time} As an alterative to contradicting \eqref{hypo}, one could now conclude from \eqref{xin} (and the fact that there are only $n-1$ zeroes of $f'$) that $\deriv = o(1)^n$ \emph{surely} holds.  At this point one could repeat the arguments of Miller \cite{miller} or V\^aj\^aitu--Zaharescu \cite{vz}, carefully keeping track of the dependence of constants on $n$, to obtain a contradiction (basically the point is that all the implied constants in the arguments in \cite{miller} can be verified to be of exponential size $O(1)^n$ in $n$).  This can be used as a slightly simpler alternative to using the arguments of Chijiwa \cite{chijiwa} or Kasmalkar \cite{kasmalkar} to cover the remaining $a^{(\infty)}=1$ cases of Theorem \ref{main-contr}.  We leave the details of this variant to the interested reader.
\end{remark}

\begin{remark}  Suppose one only excludes zeroes of $f$ in $D(a,1)$ rather than $\overline{D(a,1)}$.  Then one can check that the arguments in Theorem \ref{contra-origin} still hold as long as $a \neq 0$, and the arguments in Theorem \ref{contra-circle} are completely unaffected.  A careful reading of the arguments in \cite{degot}, \cite{chalebgwa} reveals that the ranges of Theorem \ref{main-contr} covered by those results also continue to be valid with this weaker hypothesis.  As mentioned in the introduction, the case $a=0$ under this hypothesis was covered by Schmeisser \cite{schmeisser}.  Finally, the results in \cite{miller}, \cite{vz}, \cite{chijiwa}, \cite{kasmalkar} also apply in this setting except when $a=1$, and in the $a=1$ case the results of Rubinstein \cite{rubinstein} shows that equality holds only for the polynomial $z^n-1$.  Putting this all together and undoing the normalisations that $f$ be monic and $\lambda_0$ be real, we see that for sufficiently large degrees $n$, the only case in which Sendov's conjecture can fail with $\overline{D(\lambda_0,1)}$ replaced by $D(\lambda_0,1)$ is if $f(z) = c (z^n - e^{i\theta})$ for some $c \in \C \backslash \{0\}$ and $\theta \in \R$.  This is consistent with a conjecture of Phelps and Rodriguez \cite{phelps}.
\end{remark}

\end{document}